\newtheorem{thm}{Theorem}[section]
\newtheorem{lem}[thm]{Lemma}
\theoremstyle{definition}
\newtheorem{cor}[thm]{Corollary}
\newtheorem{rmk}[thm]{Remark}
\newtheorem{prop}[thm]{Proposition}
\newtheorem*{Question}{Question}
\newtheorem*{Notation}{Notation}
\newtheorem*{Acknowledgement}{Acknowledgement}
\newcommand{\ie}{\emph{i.e.} }
\newcommand{\cf}{\emph{cf.} }
\newcommand{\R}{\mathbb{R}}
\newcommand{\C}{\mathbb{C}}
\newcommand{\norm}[1]{\left\lVert#1\right\rVert}
\newcommand{\Lap}{\Delta}
\newcommand{\HGF}{\,_2F_1}
\def\XXint#1#2#3{{\setbox0=\hbox{$#1{#2#3}{\int}$ }
		\vcenter{\hbox{$#2#3$ }}\kern-.6\wd0}}
\title{Complete Calabi-Yau metrics in the complement of two divisors}
\author{Tristan C. Collins and Yang Li}
\date{}
\begin{document}

\maketitle

\begin{abstract}
We construct new complete Calabi-Yau metrics on the complement of an anticanonical divisors $D$ in a Fano manifold of dimension at least three, when $D$ consists of two transversely intersecting smooth divisors. The asymptotic geometry is modeled on a generalization of the Calabi ansatz, related to the non-archimedean Monge-Amp\`ere equation. 
\end{abstract}

\section{Introduction}

Following Yau's solution of the Calabi conjecture \cite{Yau}, the analytic approach to the construction of complete Calabi-Yau metrics on noncompact manifolds was initiated by the seminal works of Tian-Yau \cite{TianYau, TianYau2} who proved the following fundamental theorem

\begin{thm}[Tian-Yau, \cite{TianYau}]
Given a smooth irreducible anticanonical divisor $D$ on a smooth Fano manifold $\bar{X}$, then the complement $X=\bar{X}\setminus D$ admits a complete Calabi-Yau metric.
\end{thm}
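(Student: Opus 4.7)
The plan is to follow the strategy that Tian and Yau developed: build a complete background Kähler metric with Ricci curvature close to zero near the divisor, then correct it to a genuine Calabi-Yau metric by solving a complex Monge-Ampère equation. First, I would extract the holomorphic data. Since $D \in |-K_{\bar X}|$, the adjunction formula gives $K_D = (K_{\bar X} + D)|_D \cong \mathcal{O}_D$, so $D$ itself is a Calabi-Yau manifold. By Yau's theorem applied to $D$, for any Kähler class on $D$ there is a Ricci-flat Kähler metric $\omega_D$ representing it. Moreover, a defining section $s$ of the line bundle $\mathcal{O}(D) \cong -K_{\bar X}$ is, up to scale, a meromorphic volume form on $\bar X$ with a simple pole along $D$, and its restriction to $X$ is a nowhere-vanishing holomorphic volume form $\Omega$.

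Next I would build the background metric $\omega_0$ via the Calabi ansatz. Choose a Hermitian metric $h$ on $\mathcal{O}(D)$ whose curvature form restricts to (a multiple of) $\omega_D$ on $D$, set $t = -\log \norm{s}_h^2$, and look for a Kähler form of the form $\ddbar F(t)$ on a tubular neighborhood of $D$ with $F$ a convex increasing function. Matching volumes with $\Omega\wedge\bar\Omega$ leads to an ODE whose solution behaves like $F(t) \sim c\, t^{(n+1)/n}$; this produces a complete model metric $\omega_{\mathrm{mod}}$ near $D$ whose Ricci form decays rapidly and whose injectivity radius and curvature are controlled in terms of $t$. I would glue $\omega_{\mathrm{mod}}$ to an arbitrary smooth Kähler metric on a compact region of $X$ via a cutoff to obtain a complete Kähler metric $\omega_0$ on $X$ satisfying $\omega_0^n = e^{f}\, (\sqrt{-1})^{n^2}\Omega\wedge\bar\Omega$ for a function $f$ that decays at infinity (together with all its derivatives in an $\omega_0$-adapted sense).

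I would then seek the Calabi-Yau metric in the form $\omega = \omega_0 + \sqrt{-1}\,\ddbar u$, which reduces to solving the complex Monge-Ampère equation
\begin{equation*}
(\omega_0 + \sqrt{-1}\,\ddbar u)^n = e^{f}\, \omega_0^n
\end{equation*}
on the complete, noncompact manifold $(X,\omega_0)$. This is carried out by Yau's continuity method on an exhaustion of $X$ by compact sets with boundary, or by solving directly on $X$ via approximation. The compact Dirichlet problems are controlled by standard estimates; the core of the argument is obtaining \emph{uniform} a priori bounds on $u$ and its derivatives, independent of the exhausting domain, together with appropriate decay at infinity so that one can extract a limit that is a genuine Kähler potential for a complete metric.

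The main obstacle, as usual, is the weighted $C^0$ estimate for $u$. A naive Moser iteration is not available because $X$ is noncompact with no Poincaré inequality of the required strength. The Tian-Yau resolution uses the polynomial volume growth of $\omega_0$ (coming from the Calabi ansatz) together with a Sobolev inequality on $(X,\omega_0)$ to run a weighted De Giorgi-Nash-Moser iteration, producing a decay estimate $|u| \le C\, t^{-\alpha}$ for some $\alpha>0$; a carefully chosen barrier constructed from $t$ is needed to initiate this. Once the weighted $L^\infty$ bound is in hand, the higher-order Yau $C^2$ and Calabi-Yau $C^3$ estimates carry over from the compact setting, since they are local in nature and the background geometry has bounded curvature at each scale. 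Passing to the limit and appealing to standard elliptic regularity then yields the desired complete Calabi-Yau metric on $X$.
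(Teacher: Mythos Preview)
The paper does not give its own proof of this theorem: it is stated as a cited result of Tian--Yau \cite{TianYau}, serving as background and motivation for the new two-divisor construction. There is therefore nothing in the paper to compare your argument against line by line.

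That said, your outline is a faithful sketch of the Tian--Yau strategy and matches what the paper later invokes as the ``Tian--Yau--Hein package'' (Section~\ref{Hein}): adjunction makes $D$ Calabi--Yau, the Calabi ansatz $F(t)\sim t^{(n+1)/n}$ provides the asymptotic model (exactly as recalled in Section~\ref{semiflatCalabiansatz}), one glues to a global background with decaying volume-form error, and one solves the Monge--Amp\`ere equation via weighted Moser iteration using a Sobolev inequality adapted to the sub-Euclidean volume growth. Two small refinements: the object identified with the holomorphic volume form is $s^{-1}$ (a section of $K_{\bar X}$) rather than $s$ itself; and in the original Tian--Yau argument the potential $u$ is only shown to be bounded, with the sharper decay $|u|=O(e^{-c\,r^{(n-1)/n}})$ coming from Hein's later work (as quoted in Theorem~\ref{TianYauthm}).
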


The asymptotic geometry, from the holomorphic viewpoint, is modelled on the normal bundle $\mathcal{O}(D)|_D= -K_X|_D\to D$. This total space carries a model metric known as the \textbf{Calabi ansatz}, which specifies the asymptotic behaviour of the desired Calabi-Yau metric. The main gist of Tian and Yau's work, which is improved later by Hein \cite{Heinthesis} among others, is that once we have a good asymptotic ansatz, then running a non-compact version of Yau's proof of the Calabi conjecture would produce an actual Calabi-Yau metric on $X$.

In the current paper we are motivated by the following well-known question, which dates back to the work of Tian-Yau;
\begin{Question}
	(`\textbf{Tian-Yau problem}')
 Let $\bar{X}$ be a smooth $n$-dimensional Fano manifold, and $D=\sum D_i$ be an anticanonical divisor, which we assume to be simple normal crossing, with only multiplicity one components. Let $X=\bar{X}\setminus D$, then $X$ has a nowhere vanishing holomorphic volume form $\Omega$. When does $X$ admit a complete Calabi-Yau metric?
\end{Question}

This setup has plenty of examples, for instance $\bar{X}$ can be $\mathbb{CP}^{n}$, and $D$ is a multiplicity one simple normal crossing divisor in $\mathcal{O}(n+1).$ Morally speaking, we are asking `what would happen to the Tian-Yau construction once $D$ breaks up into several components'. Our main result is a solution of the Tian-Yau problem in the case of two proportional divisors.

\begin{thm}
 Let $\bar{X}$ be a smooth $n$-dimensional Fano manifold with $n\geq 3$, whose anticanonical bundle is $(d_1+d_2)L_0$ for some positive line bundle $L_0$, and $d_1, d_2$ are positive integers. Let $D_1, D_2$ be two transversally intersecting smooth divisors in the linear system associated to $d_1L_0$ and $d_2L_0$ respectively. Then $X=\bar{X}\setminus D_1\cup D_2$ admits a complete Calabi-Yau metric. 
\end{thm}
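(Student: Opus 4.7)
The plan is to execute a non-compact Calabi-Yau construction in the spirit of Tian-Yau and Hein: build an asymptotic model near $D_1\cup D_2$ from a generalized Calabi ansatz, glue it to a reference K\"ahler metric on the compact core to produce an approximate Calabi-Yau form, and then perturb to an exact solution by solving the complex Monge-Amp\`ere equation on $X$ in weighted spaces adapted to the asymptotic geometry.

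For the model, fix a positively curved hermitian metric $h_0$ on $L_0$ with K\"ahler curvature $\omega_0$, choose defining sections $s_i\in H^0(\bar X,d_iL_0)$ of $D_i$, and set $t_i=-\log\|s_i\|^2_{h_0^{d_i}}$, so that $t_i\to+\infty$ along $D_i$ and $|\Omega|^2\sim e^{t_1+t_2}$ near the divisors. The generalized Calabi ansatz takes a K\"ahler potential $\Phi=F(t_1,t_2)$ for a convex function $F$ on an unbounded two-dimensional region; a direct computation gives
\[
i\partial\bar\partial\Phi=(d_1F_{t_1}+d_2F_{t_2})\omega_0+\sum_{j,k=1}^{2}F_{t_jt_k}\,i\partial t_j\wedge\bar\partial t_k,
\]
splitting into a \emph{horizontal} piece along $\bar X$ and a \emph{vertical} Hessian piece in the $(t_1,t_2)$-plane. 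Expanding $(i\partial\bar\partial\Phi)^n$ and matching with $\Omega\wedge\bar\Omega$ yields, to leading order, a real Monge-Amp\`ere equation for $F$ of schematic form $\det(F_{t_jt_k})\cdot(d_1F_{t_1}+d_2F_{t_2})^{n-2}=c\,e^{t_1+t_2}$, with appropriate growth and boundary behaviour on a polyhedral domain whose corners correspond to the intersection $D_1\cap D_2$. This is the non-archimedean Monge-Amp\`ere problem on the dual edge of the simple normal crossing divisor $D_1+D_2$ referred to in the abstract; establishing its solvability, regularity and sharp boundary asymptotics is a finite-dimensional but nontrivial task whose output is the model Calabi-Yau geometry at infinity.

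With the model in hand, I would glue it to a K\"ahler metric on a compact core of $\bar X$ via cutoffs, producing an approximate Calabi-Yau form $\omega_{\mathrm{app}}$ with $\omega_{\mathrm{app}}^n=e^f\,\Omega\wedge\bar\Omega$ where $f$ decays at infinity in weighted norms. The final and most difficult step is then to solve the complex Monge-Amp\`ere equation $(\omega_{\mathrm{app}}+i\partial\bar\partial u)^n=e^{-f}\omega_{\mathrm{app}}^n$ on $X$ by a continuity method in weighted H\"older spaces adapted to the ansatz, following the a priori estimate scheme familiar from Yau and Hein: $C^0$ by Moser iteration using a Sobolev inequality for $\omega_{\mathrm{app}}$, $C^2$ by Yau's Laplacian computation using bounds on the bisectional curvature of the model, and $C^{2,\alpha}$ by Evans-Krylov. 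I expect the main obstacle to be the $C^0$ estimate: the asymptotic geometry is neither Euclidean nor of straightforward Calabi fibred type but a mixed tropical structure whose volume growth, isoperimetric behaviour and Sobolev constant must be extracted from the underlying 2D Monge-Amp\`ere solution. In particular, the analysis along $D_1\cap D_2$, where the ansatz has a genuinely two-dimensional asymptotic cone structure, has no direct analogue in the prior Tian-Yau or Hein literature and should be where the novelty and technical heart of the paper concentrate; the dimension hypothesis $n\geq 3$ presumably enters precisely to give the intersection $D_1\cap D_2$ positive dimension so that this fibred picture is meaningful.
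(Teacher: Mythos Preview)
Your high-level architecture (model at infinity, glue, perturb via Hein's weighted Monge--Amp\`ere machinery) matches the paper, but several of the load-bearing ideas are missing or mis-specified, and as written the plan would not close.

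First, the reduced equation is wrong. After matching $(i\partial\bar\partial\Phi)^n$ against $\Omega\wedge\bar\Omega$, the factors of $d\log\xi_i\wedge d\overline{\log\xi_i}$ on both sides cancel, and the non-archimedean Monge--Amp\`ere equation has \emph{constant} right-hand side:
\[
\det(D^2u)\,(d_1u_{x_1}+d_2u_{x_2})^{n-2}=\text{const},\qquad x_i=-\log|S_i|.
\]
This is essential: the constant RHS admits \emph{homogeneous} solutions $u(x_1,x_2)=x_1^{(n+2)/n}v(d_1x_2/d_2x_1)$, and the paper's entire model is built by reducing to and explicitly solving the resulting second-order ODE for $v$ (in fact via a first integral and hypergeometric functions). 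Your $e^{t_1+t_2}$ right-hand side would destroy this homogeneity and leave you with a genuine two-variable PDE for which no solution theory is provided.

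Second, and more seriously, you have the geometry of infinity inverted. The neighbourhood of $D_1\cap D_2$ is the \emph{generic} (easy) region: there the generalized Calabi ansatz is exactly Calabi--Yau once the ODE is solved. The delicate part of infinity is the \emph{non-generic} region near $D_i\setminus D_j$, where the ansatz becomes incomplete. The paper's key structural observation is that $D_2\cap D_1$ is anticanonical in the Fano manifold $D_1$, so $D_1\setminus D_2$ carries its own Tian--Yau metric $\omega_{TY}$; the correct model near $D_1\setminus D_2$ is a power-law-scaled fibration by copies of this lower-dimensional Tian--Yau space over the $\log|\xi'|$ cylinder. The boundary condition on the ODE at $t\to 0$ (namely $v'\to -\tfrac{n+2}{n}v_0$ with a specific subleading $t^{1/(n-1)}$ term) is precisely what is needed for the generalized Calabi ansatz to match this Tian--Yau fibration. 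None of this appears in your proposal, where you speak only of gluing to ``a K\"ahler metric on a compact core''; that gluing alone would leave the entire neighbourhood of $D_i\setminus D_j$ unaccounted for.

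Third, the role of $n\ge 3$ is not what you guess. It enters because the ODE must satisfy both boundary conditions (at $t=0$ and $t=\infty$), which the paper implements via the symmetry $w(t)=t\,w(1/t)$ and a matching condition $w'(1)=\tfrac12 w(1)$; an explicit hypergeometric computation shows this has a positive solution $w_0$ iff $n\ge 3$. (Irreducibility of $D_1\cap D_2$ via Lefschetz also uses $n\ge 3$.)

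Finally, once the correct two-region ansatz is in place with faster-than-quadratic volume-form error, the perturbation step is a direct application of Hein's package: one verifies the $\mathrm{SOB}(p')$ condition with $p'=4n/(n+2)$ from the explicit asymptotics and invokes his existence theorem. The analytic difficulty you anticipate in the $C^0$ estimate is largely absorbed by Hein's weighted Sobolev framework; the genuine novelty lies in the ODE/matching story you have omitted.
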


More precisely, the holomorphic geometry in the \textbf{generic region} near infinity is modeled on the total space of the bundle $d_1L_0\oplus d_2L_0\to D_1\cap D_2$. We will explain that there is a \textbf{generalized Calabi ansatz} on (an open subset of) this total space, which is associated with a PDE we call the \textbf{non-archimedean Monge-Amp\`ere equation}. This story has its origin in the context of polarized degenerations, non-archimedean geometry and the SYZ conjecture \cite{NASYZ}.
In our specialized setting, this equation can be dimensionally reduced to an ODE, whose solutions can be rather explicitly given in terms of hypergeometric functions.

The infinity of $X$ also contains \textbf{non-generic regions}, which complex geometrically correspond to the neighbourhood of $D_1\setminus D_2$ (resp. $D_2\setminus D_1$) inside $\bar{X}$. Notice that $D_1\cap D_2$ is an anticanonical divisor inside the Fano manifold $D_1$ (resp. $D_2$), and the metric geometry in the non-generic region involves a fibration by the Tian-Yau metrics on $D_1\setminus D_2$ (resp. $D_2\setminus D_1$) with some power law scalings.
Matching the generalized Calabi ansatz with the metric behaviour in the non-generic region involves a nontrivial ODE matching problem.

The main strategy to construct the Calabi-Yau metric, is to first produce an approximate metric ansatz, and after suitably improving the decay rate of the volume form error, we appeal to Tian-Yau-Hein's existence package. This strategy has been used in a number of recent works, notably \cite{Heingravitational}\cite{LiC3}\cite{LiTaubNUT}\cite{Ronan}\cite{Gabor}.

Our construction suggests that the Tian-Yau problem has an \textbf{inductive structure} in terms of the depth of intersections of the divisors $D_i$ on $\bar{X}$. For instance, in the case of three divisors $D_1, D_2, D_3$ with nonempty intersections, we expect the Tian-Yau problem on $D_1\setminus (D_2\cup D_3)\cap D_1$ (and the cyclic permutations) to appear in the non-generic region at infinity. The generic region would involve a generalized Calabi ansatz metric, associated with a more complicated non-archimedean Monge-Amp\`ere equation, whose boundary conditions are prescribed by the need to match with the nongeneric regions. We think the principal remaining difficulty is then to solve the non-archimedean Monge-Amp\`ere equation, which can in general no longer be reduced to an ODE as we increase the number of divisors. Another interesting direction is to further develop the link with non-archimedean geometry (\cf section \ref{nonarchimedeanmeaning}).

The organization of this paper is as follows. Section \ref{GeneralizedCalabiansatzODE} contains most of the geometric aspects. It introduces the generalized Calabi ansatz, the ODE reduction of the non-archimedean Monge-Amp\`ere equation, and the boundary conditions. We also discuss further relations to the literature and future directions. Section \ref{MoreonODE} explicitly solves the ODE and implements the matching problem. Section \ref{initialerror} is concerned with producing an approximate metric ansatz, and the rather technical issue of error estimates. Section \ref{WeightedSobolevCY} reviews the Tian-Yau-Hein existence package,  and hammers a few final nails.

\begin{Acknowledgement}
Y.L is a current Clay Research Fellow and a CLE Moore Instructor at MIT. He thanks S. Sun for related discussions in the past.
T.C.C is supported in part by NSF CAREER grant DMS-1944952 and an Alfred P. Sloan Fellowship. 
\end{Acknowledgement}

\section{Generalized Calabi ansatz and ODE reduction}\label{GeneralizedCalabiansatzODE}

\subsection{The generalized Calabi ansatz}\label{generalizedCalabiansatz}

We shall describe an approximate ansatz for producing Calabi-Yau metrics, which simultaneously generalizes the Calabi ansatz on the total space of a positive line bundle over a compact Calabi-Yau manifold, and the semiflat metrics coming from torus invariant dimensional reductions of Calabi-Yau metrics. A very closely related ansatz in the context of polarized algebraic degenerations, was discovered in \cite{NASYZ} in an attempt to give a conjectural differential geometric interpretation to the non-archimedean Monge-Amp\`ere equation. Some of our terminologies will therefore reflect the non-archimedean origin of this ansatz.

Let $L_1, \ldots L_m$ be positive line bundles over an $(n-m)$-dimensional compact Calabi-Yau manifold $Y$ with positively curved smooth Hermitian metrics $h_{L_1}, \ldots h_{L_m}$, and let $Z$ be the total space of $L_1\oplus \ldots \oplus L_m\to Y$. Let $r_i$ be the radius distance function on $L_i$. In local holomorphic trivializations of $L_i$, the Hermitian metric on $L_i$ can be written in terms of local potentials as $h_{L_i}= e^{-2\phi_i}$, and the radius distance $r_i=|\xi_i| e^{-\phi_i}$, where $\xi_i$ are the local fibre coordinates on $L_i$. Our task is to construct approximate Calabi-Yau metrics on the region $\{ 0<r_i\ll 1, \forall i \}$ inside $Z$, on the line bundle $L\to Z$ obtained by pulling back a (semi-positive) line bundle $L\to X$. For our main intended applications, $L$ is in fact trivial, and $m=2$. An important conceptual point is that these ansatz metrics will be incomplete in the $r_i\sim 1$ region. Their aim is to provide local metric models on the generic regions of complete Calabi-Yau manifolds, and the global problem would also involve the nontrivial step of finding partial completions of these ansatz metrics.
Concretely, this incompleteness means in the $r_i\sim 1$ region this particular ansatz breaks down, and one must find some alternative ansatz.

It would be helpful to keep in mind that the metric will look like an iterated fibration. On the smallest scale, we have the $T^m$ tori, coming from the circle directions of the line bundles $L_1,\ldots L_m$. These are fibred over compact manifolds diffeomorphic to $Y$, which are close to being Calabi-Yau with length scale much bigger than the tori, and this torus fibration structure is in turn fibred over $m$ noncompact real directions, corresponding roughly to the $\log r_i$ variables. The length scale of the base is much larger than the intermediate length scale of the $Y$-fibres.

\begin{Notation}
	Our convention is $d=\partial+ \bar{\partial}$,  $d^c= \frac{ \sqrt{-1} }{2\pi} (-\partial+ \bar{\partial})$, so $dd^c= \frac{\sqrt{-1}}{\pi}\partial \bar{\partial}$. Given a Hermitian metric $h$ on a line bundle $L$, its curvature form is $-dd^c\log h^{1/2}$ in the class $c_1(L)$.
\end{Notation}

Let $h_L$ be a smooth Hermitian metric on $L\to Y$, which we pull back to $L\to Z$. For our main applications, $L$ is trivial and $h_L=1$. To zeroth approximation, we try the ansatz (which shall be improved later)
\begin{equation*}
h_1= h_L \exp\left( -2 u( -\log r_1, \ldots ,-\log r_m)   \right)
\end{equation*}
where $u$ is some smooth convex function, which should be thought of as the leading order K\"ahler potential in a particular normalisation convention. Write $x_i=-\log r_i$, where the sign is chosen so that $x_i>0$ in the region of interest.  We calculate the K\"ahler metric (the positivity is not automatic, and amounts to an extra assumption)
\[
\begin{split}
& -dd^c \log h_1^{1/2}= -dd^c \log h_L^{1/2} + dd^c u
\\
=& -dd^c \log h_L^{1/2} + %\frac{ \sqrt{-1}}{4\pi} 
\sum \frac{\partial^2 u}{\partial x_i \partial x_j} d\log r_i \wedge d^c \log r_j-  \sum \frac{\partial u}{\partial x_i} dd^c \log r_i.
\end{split}
\]
Recall in local coordinates $r_i= |\xi_i| e^{-\phi_i}$. The Hessian term contains a term
\[
\frac{ \sqrt{-1}}{4\pi} \sum \frac{\partial^2 u}{\partial x_i \partial x_j} d\log \xi_i \wedge d\overline{\log \xi_j},
\]
which for $|\xi_i|\ll 1$ exponentially dominates the $T^m$-tori and the $m$ base directions, since $\sum d\log \xi_i \wedge d\overline{\log \xi_i}$ is exponentially larger than $\sum d\xi_i \wedge d\overline{ \xi_i}$ in the logarithmic coordinates. 
The term 
\[
-  \sum \frac{\partial u}{\partial x_i} dd^c \log r_i=  \sum \frac{\partial u}{\partial x_i} dd^c  \phi_i,
\]
since $dd^c \log |\xi_i|=0$ holds for $\xi_i\neq 0$, which is valid on the region $0<r_i\ll 1$ under consideration. 
The ansatz will only be used in the region with
\[
|D^2u |\ll 1\ll |Du|. 
\]
As such, we can essentially ignore the other contributions produced by the Hessian term.

There are compact directions diffeomorphic to $Y$. For this, notice for each fixed value of $x=(x_1, \ldots x_m)$, we get a $T^m$-invariant subset $Z_x\subset Z$, which projects down to $Y$ via the natural map $Z\to Y$. This projection map is topologically the quotient map by the $T^m$-action, so $Z_x/T^m$ is naturally diffeomorphic to $Y$. Since $Z_x/T^m$ has no a priori complex structure, one cannot say that this is a biholomorphism. Nevertheless, the natural Riemannian metric on $Z_x$ induces a metric on $Z_x/T^m$ by looking at the transverse directions to the $T^m$-action, and via the diffeomorphism this is close to the K\"ahler metric on $Y$
\begin{equation}\label{effectiveKahlerclass}
-dd^c \log h_L^{1/2} + \sum \frac{\partial u}{\partial x_i} dd^c  \phi_i.
\end{equation}
This expression requires some explanation: by definition $-dd^c \log h_L^{1/2}$ and  $dd^c  \phi_i$ make sense on $Y$. The term  $\frac{\partial u}{\partial x_i}$ is a function of $x$, and for fixed $x$ it is merely a constant coefficient on $Y$. Notice the metric (\ref{effectiveKahlerclass}) on $Y$  lies in the K\"ahler class $\mathcal{D}u=c_1(L)+ \sum  \frac{\partial u}{\partial x_i} c_1(L_i)$. The reason we are able to acquire a cohomology class which is not obvious in the topological setup, comes from the distributional terms of $dd^c \log |\xi_i|$ being discarded in the above calculations.  We will refer to this $\mathcal{D}u$ as an `effective K\"ahler class', since it is only present in the effective approximate description of the metric. Its size affects the length scale of the metric on the $Y$-fibres.

The next goal is to improve the metric so that the $Y$-fibres approximately have the Calabi-Yau metrics in the same class $c_1(L)+ \sum  \frac{\partial u}{\partial x_i} c_1(L_i)$. This is conceptually similar to the semi-Ricci-flat metrics in the context of holomorphic fibred Calabi-Yau manifolds \cite{Tosatti}. We take 
\[
h= h_1 \exp(- 2\phi_x),
\]
where for each $x$, we associate some potential $\phi_x$ on $Y$, which pulls back to $Z_x$, so varying over all $x$ we get a function on an open subset of $Z$. We shall assume that the dependence on $x$ is sufficiently weak. Then the dominant effect is to change the metric on $Z_x/T^m\simeq Y$ to 
\begin{equation}\label{effectivefibremetric}
-dd^c \log h_L^{1/2} + \sum \frac{\partial u}{\partial x_i} dd^c  \phi_i + dd^c \phi_x,
\end{equation}
and the other effects in the $\log \xi_i$ directions are suppressed. We choose $\phi_x$ so that the effective fibre metrics (\ref{effectivefibremetric}) are the unique Calabi-Yau metrics in the cohomology class $\mathcal{D}u(x)$. This determines $\phi_x$ up to fibrewise constants depending on $x$. For the purpose of constructing approximate Calabi-Yau metrics, the choice is inessential as long as the $x$-derivatives are small enough, just like what happens for semi-Ricci-flat metrics.

To leading order, 
\[
\begin{split}
& (-dd^c \log h^{1/2} )^n\approx \frac{n!}{(n-m)!} \det(D^2 u) \prod_1^m \frac{\sqrt{-1}}{4\pi} d\log \xi_i\wedge d\overline{\log \xi_i }
\\
& \wedge (-dd^c \log h_L^{1/2} + \sum \frac{\partial u}{\partial x_i} dd^c  \phi_i + dd^c \phi_x)^{n-m}
\\
=& \frac{n! \int_Y (\mathcal{D}u)^{n-m}}{(n-m)!} \det(D^2 u)
 (\prod_1^m \frac{\sqrt{-1}}{4\pi} d\log \xi_i\wedge d\overline{\log \xi_i } ) \wedge  \sqrt{-1}^{(n-m)^2} \Omega_Y\wedge \overline{\Omega}_Y,
\end{split}
\]
where $\Omega_Y$ is the holomorphic volume form on $Y$, normalized to \[
\sqrt{-1}^{(n-m)^2} \int_Y \Omega_Y\wedge \overline{\Omega}_Y=1.
\]
The term  $\int_Y (\mathcal{D}u)^{n-m}$ is intersection theoretic, and is polynomial in the derivative of $u$. 
The natural holomorphic form on $Z$ is up to a global constant
\begin{equation}\label{holovol}
\Omega= \prod_1^m d\log \xi_i \wedge \Omega_Y,
\end{equation}
which is well defined independent of trivializations. 
We see that
\[
(-dd^c \log h^{1/2} )^n\approx \text{const } \Omega \wedge \overline{\Omega}  \det(D^2 u)  \int_Y (\mathcal{D}u)^{n-m} .
\]
Thus provided the various assumptions involved in the approximation are satisfied, then the condition for the metric $-dd^c \log h^{1/2}$ to define an approximate Calabi-Yau metric is 
\begin{equation}\label{NAMA}
\det(D^2 u)  \int_Y (\mathcal{D}u)^{n-m} =\text{const}. 
\end{equation}
This is a Monge-Amp\`ere type PDE on the real $m$-dimensional base, which we call the \textbf{non-archimedean Monge-Amp\`ere equation} (NA MA for short) on account of a very similar construction in \cite{NASYZ}. The associated almost Calabi-Yau metric is called the \textbf{generalized Calabi ansatz}.

\subsection{Semiflat metrics and the Calabi ansatz}\label{semiflatCalabiansatz}

The generalized Calabi ansatz has two familiar special cases:

For $m=n$, then $Y$ is just a point, and $0<r_i\ll 1$ amounts to the subset $|\xi_i|\ll 1$ inside $(\C^*)^n$. The NA MA equation is just the real MA equation
\[
\det(D^2 u)=\text{const}. 
\]
The ansatz then produces a Calabi-Yau metric, known as the \textbf{semiflat metric}, familiar in the SYZ conjecture \cite{SYZ}.

For $m=1$, and $L$ trivial, then $Z$ is the total space of a positive line bundle $L_1$ over an $(n-1)$-dimensional Calabi-Yau manifold $Y$. We then take $h_{L_1}$ to be the Hermitian metric corresponding to the Calabi-Yau metric in $c_1(L_1)$. We can also view $h_{L_1}$ as a function on $Z$, equal to $r_{L_1}^2$. Up to a normalising constant, the \textbf{Calabi ansatz} is
\[
\omega_{Cal}= \frac{n}{n+1} dd^c (-\log h_{L_1}^{1/2})^{(n+1)/n}.
\]
We compare this to the NA MA equation (\ref{NAMA}) in this case: $u$ is a function of a single real variable $x$, satisfying
\[
u'' u'^{n-1}= \text{const}. 
\]
Up to constant $u=x^{(n+1)/n} $. Thus the NA MA equation reproduces the Calabi ansatz.

\subsection{Simplifications for proportional line bundles}
\label{proportionallinebundles}

A special case of the generalized Calabi ansatz is when $L$ is trivial, and $L_i=d_i L_0$ for some positive line bundle $L_0$ and positive integers $d_i>0$. In this case, we can take $h_{L}=1$, and $h_{L_i}$ is the suitable tensor power of $h_{L_0}$, where $h_{L_0}$ can be chosen to correspond to the Calabi-Yau metric in the class $c_1(L_0)$. The ansatz metric is simply
\[
\begin{split}
dd^c u= \sum \frac{\partial^2 u}{\partial x_i \partial x_j} d\log r_i \wedge d^c \log r_j+  \sum \frac{\partial u}{\partial x_i} dd^c \phi_i
\\
=   \sum \frac{\partial^2 u}{\partial x_i \partial x_j} d\log r_i \wedge d^c \log r_j+  (\sum \frac{\partial u}{\partial x_i}d_i )dd^c \phi_0   .
\end{split}
\]
Observe that for rank reasons
\[
(\sum \frac{\partial^2 u}{\partial x_i \partial x_j} d\log r_i \wedge d^c \log r_j)^{m+1}=0, \quad  (dd^c \phi_0)^{n-m+1}=0.
\]
We compute the volume form using binomial expansion
\begin{equation*}
(dd^cu)^n= \frac{n!}{(n-m)!} (\sum \frac{\partial u}{\partial x_i}d_i )^{n-m} \det(D^2u) (\prod_1^m d\log r_i\wedge d^c\log r_i) \wedge (dd^c \phi_0)^{n-m}.
\end{equation*}
Since $(dd^c \phi_0)^{n-m}$ already exhaust all base terms, we can replace $\log r_i$ by $\log |\xi_i|$, and obtain
\begin{equation*}
(dd^cu)^n = \frac{n!}{(n-m)!} (\sum \frac{\partial u}{\partial x_i}d_i )^{n-m} \det(D^2u) (\prod_1^m \frac{ \sqrt{-1}}{4\pi} d\log \xi_i\wedge d\overline{\log \xi_i}) \wedge (dd^c \phi_0)^{n-m}.
\end{equation*}
The Calabi-Yau condition on $dd^c\phi_0$ and the normalization on $\Omega_Y$ imply
\[
(dd^c \phi_0)^{n-m}= (\int_Y c_1(L_0)^{n-m} ) \sqrt{-1}^{(n-m)^2} \Omega_Y\wedge \overline{\Omega}_Y  .
\]
The conclusion is that 
\begin{lem}\label{generalizedCalabimodelisCY}
As long as the NA MA equation holds
\begin{equation*}
\det(D^2 u)  (\sum \frac{\partial u}{\partial x_i}d_i )^{n-m}
=\text{const},  
\end{equation*}
then the generalized Calabi ansatz $dd^cu$ is a Calabi-Yau metric, in this case of proportional line bundles. 
\end{lem}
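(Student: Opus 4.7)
The proof is essentially a matter of organizing the computation already carried out in the paragraphs preceding the lemma into a single clean argument; no genuinely new input is required.

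The plan is to decompose $dd^c u = H + B$, where
\begin{equation*}
H := \sum_{i,j} \frac{\partial^2 u}{\partial x_i \partial x_j}\, d\log r_i \wedge d^c \log r_j, \qquad B := \Big(\sum_i d_i \frac{\partial u}{\partial x_i}\Big)\, dd^c \phi_0.
\end{equation*}
The proportional line bundle hypothesis $L_i = d_i L_0$ is what allows merging $\sum_i (\partial u/\partial x_i)\, dd^c \phi_i$ into the single term $B$ pulled back from $Y$. The form $H$ is supported in the $m$ fibre directions $d\log r_i, d^c\log r_j$, while $B$ pulls back from the $(n-m)$-dimensional base $Y$.

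Next I would invoke the two rank vanishings $H^{m+1}=0$ and $B^{n-m+1}=0$ to collapse $(H+B)^n$ via the binomial theorem to the single cross term $\binom{n}{m} H^m \wedge B^{n-m}$. A direct calculation yields
\begin{equation*}
H^m = m!\, \det(D^2 u)\, \prod_{i=1}^m d\log r_i \wedge d^c \log r_i.
\end{equation*}
On the region $\{r_i \neq 0\}$ one has $dd^c \log|\xi_i|=0$, so $d\log r_i \wedge d^c \log r_i$ can be replaced by $\tfrac{\sqrt{-1}}{4\pi} d\log \xi_i \wedge d\overline{\log \xi_i}$; any $\phi_i$-correction drops out after wedging with $(dd^c\phi_0)^{n-m}$, since that factor already saturates the horizontal $Y$-directions. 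This is precisely the volume form identity displayed just above the lemma.

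Finally, the Calabi-Yau normalization of $h_{L_0}$ gives
\begin{equation*}
(dd^c \phi_0)^{n-m} = \Big(\int_Y c_1(L_0)^{n-m}\Big)\, \sqrt{-1}^{(n-m)^2}\, \Omega_Y \wedge \overline{\Omega}_Y,
\end{equation*}
and combining this with $\Omega = \prod_i d\log\xi_i \wedge \Omega_Y$ from \eqref{holovol} assembles everything into
\begin{equation*}
(dd^c u)^n = C_{n,m,L_0,Y}\, \det(D^2 u)\, \Big(\sum_i d_i \tfrac{\partial u}{\partial x_i}\Big)^{n-m}\, \Omega \wedge \overline{\Omega}
\end{equation*}
for an explicit constant. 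Under the NA MA equation, the prefactor is constant, giving $(dd^c u)^n = c\, \Omega \wedge \overline{\Omega}$, which is the Calabi-Yau condition for the metric $dd^c u$ with respect to the holomorphic volume form $\Omega$. There is no real obstacle in this argument; the only subtle points are the rank vanishings of $H$ and $B$, and the validity of $dd^c \log|\xi_i|=0$ on $\{0<r_i\ll 1\}$, both of which are clear.
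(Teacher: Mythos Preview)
Your proposal is correct and follows essentially the same approach as the paper: the decomposition $dd^cu=H+B$, the two rank vanishings $H^{m+1}=0$, $B^{n-m+1}=0$, the binomial collapse to a single cross term, the replacement of $d\log r_i$ by $d\log|\xi_i|$ after saturating the base directions with $(dd^c\phi_0)^{n-m}$, and the use of the Calabi--Yau normalization of $h_{L_0}$ are exactly the steps the paper carries out in the paragraphs immediately preceding the lemma. You have simply repackaged that computation into a self-contained argument; there is no substantive difference.
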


\begin{rmk}
It is understood that $u$ is strictly convex, and $ \sum \frac{\partial u}{\partial x_i}d_i$ is positive. These two conditions guarantee the metric is positive definite.
\end{rmk}

\subsection{Relevance to the Tian-Yau problem}\label{TianYauproblem}

The relevance of the generalized Calabi ansatz to the Tian-Yau problem (\cf Question \ref{TianYauproblem}) is as follows. Let $\bar{X}$ be a smooth Fano manifold, and $D=\sum D_i$ be an anticanonical divisor, which we assume to be simple normal crossing, with only multiplicity one components. Let $X=\bar{X}\setminus D$, then $X$ has a nowhere vanishing holomorphic volume form $\Omega$, and one can ask when this admits a complete Calabi-Yau metric.

 An important intuition to keep in mind, is that most of the volume growth near the infinity of $X$ in fact concentrates near the deeper intersection strata $D_J=\cap_{i\in J} D_i$ of the component divisors $D_i$. Let $m$ denote the maximal $|J|$ for which  $D_J$ is non-empty, then from the volume growth perspective, the neighbourhood of the $m$-fold intersection loci are the generic regions in the Calabi-Yau $X$.  For any such subset $J$ with $|J|=m$, the neighbourhood of $D_J$ is essentially the total space of $\oplus_{j\in J} \mathcal{O}(D_j)|_{D_J}$, which corresponds to $Z$, and an application of the adjunction formula shows $D_J$ is a compact Calabi-Yau, which corresponds to $Y$. Up to inessential normalization constants, the holomorphic volume form is (\ref{holovol}) up to negligible errors. In many examples all the $\mathcal{O}(D_i)$ are ample. The possible relevance of $L$ comes from the prescribed global K\"ahler class on $X$. The picture we would like to advocate, for which our present paper is a very special case, is that one can find new Tian-Yau type metrics on $X$, whose behaviour in the generic region is modelled on the generalized Calabi ansatz. Morever, further details from this paper suggests the whole problem is inductive on $m$, in the sense that what happens in non-generic regions is related to the generalized Calabi ansatz with smaller $m$.

\subsection{ODE reduction}

While we believe the generalized Calabi ansatz has wide applicability, both in the Tian-Yau problem, and in the collapsing polarized degeneration problem as described in \cite{NASYZ}, solving the NA MA equation (\ref{NAMA}) is practically quite nontrivial for $m\geq 2$. We shall now specialize to the proportional line bundle case of section \ref{proportionallinebundles}, and further assume $m=2$. The NA MA equation becomes a PDE with two independent variables
\begin{equation}
\det(D^2 u) (d_1 \frac{\partial u}{\partial x_1} + d_2 \frac{\partial u}{\partial x_2} )^{n-2}=\text{const}. 
\end{equation}
Motivated by the Calabi ansatz, we wish to look for \emph{homogeneous} solutions. A preliminary dimensional analysis is useful:
\[
u\sim O( |x|^\alpha), \quad Du\sim O( |x|^{\alpha-1} ), \quad D^2u\sim  O(|x|^{\alpha-2}).
\]
Thus we want
\[
2(\alpha-2)+ (n-2)(\alpha-1)=0, \quad \alpha= \frac{ n+2}{n}.
\]
We try the ansatz
\begin{equation}
x_2= \frac{d_2}{d_1} tx_1, \quad u(x_1,x_2)= x_1^{ \frac{ n+2}{n} } v(t).
\end{equation}
Routine computation then reduces the NA MA equation to an ODE:

\begin{lem}
Under the homogeneous ansatz, the NA MA equation is equivalent to
\begin{equation}\label{ODE1}
(  vv''- \frac{ 2}{n+2} v'^2    ) (  \frac{n+2}{n} v+(1-t) v' )^{n-2} =\text{const}. 
\end{equation}
\end{lem}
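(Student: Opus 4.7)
The proof is essentially a direct computation verifying that the similarity ansatz is compatible with the scaling structure of the NA MA equation. The plan is as follows.

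First I would introduce the shorthand $\alpha=(n+2)/n$ and note the partial derivatives of the independent variable $t=\tfrac{d_1 x_2}{d_2 x_1}$: namely $\partial t/\partial x_1=-t/x_1$ and $\partial t/\partial x_2=d_1/(d_2 x_1)$. Using the chain rule on $u=x_1^{\alpha}v(t)$ gives
\[
\tfrac{\partial u}{\partial x_1}=x_1^{\alpha-1}(\alpha v-tv'),\qquad \tfrac{\partial u}{\partial x_2}=\tfrac{d_1}{d_2}x_1^{\alpha-1}v'.
\]
In particular the linear combination $d_1\partial_1 u+d_2\partial_2 u=d_1 x_1^{\alpha-1}\bigl(\alpha v+(1-t)v'\bigr)$, which explains the second factor in \eqref{ODE1} after raising to the power $n-2$.

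Next I would differentiate once more and compute the Hessian entries; writing everything in the form $x_1^{\alpha-2}\cdot(\text{function of }t)$ is convenient. One finds
\[
u_{11}=x_1^{\alpha-2}\bigl[\alpha(\alpha-1)v-2(\alpha-1)tv'+t^2v''\bigr],
\]
\[
u_{12}=\tfrac{d_1}{d_2}x_1^{\alpha-2}\bigl[(\alpha-1)v'-tv''\bigr],\qquad u_{22}=\tfrac{d_1^{2}}{d_2^{2}}x_1^{\alpha-2}v''.
\]
The determinant $u_{11}u_{22}-u_{12}^{2}$ then expands into a polynomial in $v,v',v'',t$; the terms proportional to $t^{2}(v'')^{2}$ and to $tv'v''$ cancel, leaving
\[
\det(D^{2}u)=\tfrac{d_1^{2}}{d_2^{2}}\,(\alpha-1)\,x_1^{2\alpha-4}\Bigl(\alpha\,vv''-(\alpha-1)\,v'^{2}\Bigr).
\]
Factoring out $\alpha$ and using $(\alpha-1)/\alpha=2/(n+2)$ converts the parenthesis to $\alpha\bigl(vv''-\tfrac{2}{n+2}v'^{2}\bigr)$, which is exactly the first factor in \eqref{ODE1}.

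Finally I would check the scaling. Multiplying the two factors, the total power of $x_1$ is $(2\alpha-4)+(n-2)(\alpha-1)=n\alpha-n-2$, which vanishes precisely because $\alpha=(n+2)/n$; this is the whole point of the choice of homogeneity exponent made in the preceding dimensional analysis. All numerical prefactors ($d_1,d_2,\alpha,\alpha-1$) combine into a single nonzero constant, which can be absorbed into the constant on the right-hand side. Equating the resulting expression to a constant gives \eqref{ODE1}, completing the reduction. There is no conceptual obstacle; the only thing to be careful about is the algebraic cancellation in the Hessian determinant, which is what produces the clean factor $vv''-\tfrac{2}{n+2}v'^{2}$, and the exponent arithmetic that forces the $x_1$-dependence to drop out.
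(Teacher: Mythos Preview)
Your proof is correct and follows essentially the same approach as the paper: both compute the first and second partials of $u=x_1^{\alpha}v(t)$ via the chain rule, expand $\det(D^2u)$ to obtain the factor $\alpha(\alpha-1)\bigl(vv''-\tfrac{2}{n+2}v'^{2}\bigr)$, and verify that the $x_1$-power cancels. The only difference is cosmetic---you work with the shorthand $\alpha=(n+2)/n$ throughout, while the paper substitutes the explicit value from the start.
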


\begin{proof}
We compute
\[
\frac{\partial u}{\partial x_1}= ( \frac{n+2}{n} v- tv') x_1^{2/n}, \quad \frac{\partial u}{\partial x_2}= \frac{d_1}{d_2} v' x_1^{2/n},
\]
and the second derivatives
\begin{equation}\label{secondderivatives}
\begin{cases}
& \frac{\partial^2 u}{\partial x_1^2}= x_1^{2/n-1} \{ \frac{2(n+2)}{n^2} v- \frac{4t}{n} v' +t^2v''    \},
\\
& \frac{\partial^2 u}{\partial x_1\partial x_2}= \frac{d_1}{d_2} (\frac{2}{n} v'- t v'') x_1^{\frac{2}{n}-1},
\\
& \frac{\partial^2 u}{\partial x_2^2}= (\frac{d_1}{d_2})^2 x_1^{\frac{2}{n}-1  }v''.
\end{cases}
\end{equation}
Whence
\[
\det(D^2 u)= (\frac{d_1}{d_2})^2 x_1^{ \frac{4}{n}-2 } (  \frac{2(n+2)}{n^2} vv''- \frac{4}{n^2} v'^2    ),
\]
\[
d_1 \frac{\partial u}{\partial x_1}+d_2 \frac{\partial u}{\partial x_2} 
= d_1 x_1^{2/n} (  \frac{n+2}{n} v+(1-t) v' ),
\]
so the NA MA equation becomes
\[
(  \frac{2(n+2)}{n^2} vv''- \frac{4}{n^2} v'^2    ) (  \frac{n+2}{n} v+(1-t) v' )^{n-2} =\text{const}. 
\]
\end{proof}

\begin{rmk}
The constant is not essential: it just amounts to rescaling the metric. 
\end{rmk}

\begin{rmk}\label{Kahlerconditionrmk1}
The K\"ahler condition requires $D^2u$ to be positive definite, and $d_1 \frac{\partial u}{\partial x_1} + d_2 \frac{\partial u}{\partial x_2}>0$. These are equivalent to
\begin{equation*}
v''>0, \quad
(n+2) vv''- 2v'^2 >0, \quad \frac{n+2}{n} v+(1-t) v'>0.
\end{equation*}
The first two inequalities imply $v>0$. These constraints are all quite natural in view of the ODE. 
\end{rmk}

\begin{rmk}\label{ODEsymmetryrmk1}
The ODE enjoys a symmetry: under the substitution
\[
v(t)= t^{\frac{n+2}{n }} \tilde{v}(1/t),
\]
we have
\[
(n+2) vv'' - 2v'^2= t^{ \frac{4}{n}-2  } (  (n+2) \tilde{v}'' \tilde{v}-2 \tilde{v}'^2),
\]
\[
\frac{n+2}{n} v+(1-t) v'= t^{2/n} \{    \frac{n+2}{n} \tilde{v} +(1-t^{-1} ) \tilde{v}'      \},
\]
so the function $\tilde{v}$ is another solution of the same ODE.  The geometric origin of this symmetry is that the NA MA equation is symmetric in $x_1, x_2$, up to the minor issue of $d_1, d_2$ which disappears after trivial changes of variables.
\end{rmk}

\subsection{Basic length scales of the generic region}\label{Basiclengthscales}

We mentioned in the beginning that the generalized Calabi ansatz geometrically describes an iterated fibration, which is supposedly the model for the generic region near infinity on the noncompact Calabi-Yau $X=\bar{X}\setminus D$. Figuring out the order of magnitude of various length scales is essentially a matter of dimensional analysis. In the generic region $t$ is of order $\sim1$, $v$ is smooth and of order $\sim 1$. Our homogeneous ansatz prescribes
\[
u\sim O(|x|^{(n+2)/n}), \quad |du|\sim O( |x|^{2/n}), \quad |D^2u|\sim O(|x|^{(2-n)/n}).
\]
We have $|D^2 u|\ll 1\ll |du|$.
From the descriptions in section \ref{generalizedCalabiansatz}, the Hessian term is responsible for the base and torus direction of the metric, while $du$ is responsible for the effective K\"ahler class, and therefore the size of the fibres diffeomorphic to $Y\simeq D_1\cap D_2$.
Thus
\[
\text{diam}(T^2)=O(|D^2u|^{1/2})= O(|x|^{(2-n)/2n}), \quad \text{diam}(Y)=O(|du|^{1/2})= O( |x|^{ 1/n  }),
\]
the distance to the origin is of order
\[
O(\int |D^2u|^{1/2} )=O( |x|^{(2+n)/2n}),
\]
and the volume within $|x|\leq r$ is $O(r^2)$ from the two log directions of the base. In terms of geodesic distance to the origin, the volume grows with power $4n/(n+2)$.

For $|x|\gg 1$, this reaffirms the intuition that the $T^2$ length scale is far smaller than $Y\simeq D_1\cap D_2$, which is far smaller than the real 2-dimensional base.

\subsection{Boundary condition of the ODE}\label{BoundaryconditionODE}

The ODE (\ref{ODE1}) is posed on $0<t<\infty$, and we now discuss the \textbf{boundary conditions} to put at $t=0$ and $t=\infty$. The infinity boundary can be reduced to the $t=0$ case by the symmetry of the ODE, which geometrically comes from the symmetry between $x_1, x_2$ (up to elementary scaling). Geometrically one would like to find a partial completion of the generalized Calabi ansatz, near the infinity of $\bar{X}\setminus D_1\cup D_2$. The generalized Calabi ansatz works in the neighbourhood of $D_1\cap D_2$, and one would like to understand what happens near $D_1\setminus D_2$ and $D_2\setminus D_1$.

The simplest boundary conditions to imagine is for the ODE to remain analytic at $t=0$. In this case, one would specify $v(0)=v_0>0$ and $v'(0)>- \frac{n+2}{n} v_0$, and the ODE is non-singular in a neighbourhood of $t=0$, so that $v$ has a Taylor expansion at $t=0$. Geometrically, the size of $T^2$ remains of order $O(x_1^{\frac{2-n}{2n}})$ and the effective K\"ahler class of $Y$ remains of order $O(x_1^{2/n})$ as $t\to 0$. There seems to be no known metric near $D_1\setminus D_2$ that one may attempt to glue to the generalized Calabi ansatz. Intuitively, one cannot suddenly `switch off' the effective K\"ahler class at $t=0$.

We are thus led to look for boundary conditions such that $\frac{n+2}{n} v+(1-t)v'\to 0$ as $t\to 0$, which intuitively means the effective K\"ahler class `switches off gradually'. The following type of boundary conditions is perhaps best motivated by fixing $v_0>0$, and trying to decrease $v'(0)$ until $v'(0)+ \frac{n+2}{n} v_0$ tends to zero. As $t\to 0$, we require $v'$ has some subleading power law behaviour
\begin{equation}\label{ODEboundary1}
\begin{cases}
& v=v_0+ O(t),
\\
& v'= - \frac{n+2}{n} v_0 + at^{\beta}+ O(t), 
\\
& v''= a\beta t^{\beta-1  }+O(1).
\end{cases}
\end{equation}
Here $0<\beta<1$ and $a>0$ are some constants to be determined. Thus
\[
vv''- \frac{2}{n+2} v'^2= av_0\beta t^{\beta-1} +O(1),
\]
\[
\frac{n+2}{n} v+ (1-t)v'= at^\beta+O(t),
\]
and the ODE predicts
\[
\beta-1+ (n-2)\beta=0,\quad \beta= \frac{1}{n-1},
\]
and
\[
\frac{av_0}{ n-1  } a^{n-2}= \text{const}, \quad a\propto v_0^{-1/(n-1)}.
\]

\subsubsection*{Geometric meaning of the boundary condition}

We can translate the boundary condition near $t=0$ into the asymptotic behaviour of the Calabi-Yau metric for $1\ll x_2\ll x_1$. Using the second derivative computation (\ref{secondderivatives}), we get
\[
\begin{cases}
\frac{\partial^2 u}{\partial x_1^2} \sim   \frac{2(n+2)}{n^2} v_0 x_1^{\frac{2}{n} -1},
\\
\frac{\partial^2 u}{\partial x_1 \partial x_2}\sim - \frac{d_1}{d_2}   \frac{2(n+2)}{ n^2 } v_0    x_1^{\frac{2}{n} -1}
\\
\frac{\partial^2 u}{\partial x_2^2} \sim (\frac{d_1}{d_2})^2  \frac{a}{n-1} t^{- \frac{ n-2}{n-1}}  x_1^{\frac{2}{n} -1}.
\end{cases}
\]
Recall that in the generalized Calabi-Yau ansatz, this Hessian matrix controls the base metric and the torus fibres. In the $x_1$-direction of the base (\ie the direction transverse to $D_1$ at infinity), and the $\log \xi_1$ circle direction, the behaviour is similar to what happens in the generic region where $t$ is of order one. However, in the $x_2$ direction, the base metric has a different scaling law:  
\[
\frac{\partial^2 u}{\partial x_2^2} \sim (\frac{d_1}{d_2})^2  \frac{a}{n-1} t^{-  \frac{n-2}{n-1}}  x_1^{\frac{2}{n} -1} =O( x_1^{\frac{2}{n}- \frac{1}{n-1}} x_2^{- \frac{n-2}{n-1} }).
\]
The distance to $x_2\sim O(1)$ is $O( x_1^{ \frac{1}{n}-\frac{1}{2(n-1)}} x_2^{ \frac{n}{2(n-1)}} )$. The $\log \xi_2$ circle direction now has diameter of order
\[
O(  x_2^{ - \frac{n-2}{2(n-1)} } x_1^{ \frac{1}{n}- \frac{1}{2(n-1)}   } ), 
\]
which is much larger compared to the $\log \xi_1$ circle. In other words, the metric exhibits \textbf{inhomogeneous collapsing} phenomenon.

%At a more quantitative level, we can examine the metric restricted to constant $\log x_1$ slices, which can be viewed as metrics on local regions inside the total space of $\mathcal{O}(D_2) \to D_1\cap D_2$. 

The potential is roughly
\begin{equation*}
\begin{split}
& u= x_1^{ \frac{n+2}{n}  } v(t)\approx x_1^{ \frac{n+2}{n}  } (v_0- \frac{n+2}{n} v_0 t + \frac{n-1}{n} a t^{ \frac{n}{n-1}  })
\\
& \approx v_0 (x_1- \frac{d_1}{d_2} x_2)^{ \frac{n+2}{n}  } + \frac{n-1}{n} ax_1^{ \frac{n+2}{n}  }  ( \frac{d_1}{d_2} \frac{x_2}{x_1}  )^{ \frac{n}{n-1}  }.
\end{split}
\end{equation*}
Here $x_2\ll x_1$. For fixed values of $x_1- \frac{d_1}{d_2} x_2$, the second term dominates the metric contribution on the slices.
Up to scaling factors by powers of $x_1$, the key dependence on $x_2$ is $dd^c x_2^{ n/(n-1)  }$, which is the \textbf{Calabi ansatz} on an open subset of the $(n-1)$-dimensional total space of the line bundle $\mathcal{O}(D_2)\to D_1\cap D_2$ (\cf section \ref{semiflatCalabiansatz}). When $\xi_1$ is allowed 
to vary, the scales of the $S^1$, the Calabi ansatz, and the base metric, all depend on $x_1$ in some power law fashion.

The significance to the compactification question, is that $\mathcal{O}(D_2)\to D_1\cap D_2$ describes the infinity of the non-compact Calabi-Yau manifold $D_1\setminus D_2$, and the Calabi ansatz is the asymptote of the Tian-Yau metric on $D_1\setminus D_2$. We can thus glue the Calabi ansatz to the Tian-Yau metric, in a parametrized fashion over the $x_1$ variable, in order to achieve the partial completion of the generalized Calabi ansatz. After this, we would have an approximate Calabi-Yau metric outside a compact set in $X=\bar{X}\setminus D_1\cup D_2$, from which one can hope to use a non-compact version of Yau's proof to the Calabi conjecture to obtain an actual Calabi-Yau metric on $X$.

\begin{rmk}
An appealing feature is that the boundary behaviour of the $m=2$ generalized Calabi ansatz is essentially the ordinary Calabi ansatz. We think this feature may generalize to larger $m$, so that the boundaries have an inductive stratification structure.

\end{rmk}

\subsection{Remarks on other related literature}

The main previously known source of complete Calabi-Yau metrics on the complement of a {\em singular} anticanonical divisor are due to Hein \cite{Heingravitational} on certain complex surfaces.  These examples are constructed on rational elliptic surfaces, which in particular admit elliptic fibrations onto $\mathbb{P}^1$ with fibers lying in the anticanonical linear system.  Hein constructs complete Calabi-Yau metrics asymptotic to the {\em semi-flat} Calabi-Yau metrics discovered by Greene-Shapere-Vafa-Yau \cite{GSVY}.  Interestingly, for Kodaira type $I_{b}$ singular fibers, these metrics turn out to be the same as the metrics constructed by Tian-Yau \cite{TianYau} on the complement of an ample anticanonical divisors in a del Pezzo surface \cite{CJY, CJY2, HSVZ}.  In what follows we collect some sporadic comparisons to other works in the literature which are not directly connected to the Tian-Yau problem.

\subsubsection{Degenerating hypersurfaces}

Sun and Zhang \cite{SunZhang} studied the degenerating Calabi-Yau metric on the family of hypersurfaces 
\[
X_t= \{  F_1 F_2+ tF=0 \}\subset \mathbb{CP}^{n}, \quad 0<|t|\ll 1,
\]
where $F_1, F_2$ define two transverse degree $d_1, d_2$ smooth irreducible hypersurfaces $D_1, D_2$, with $d_1+d_2=n+1$, and $F$ defines a generic hypersurface of degree $n+1$, so that the $F=0$ locus in $D_1\cap D_2$ is smooth and irreducible. A concrete special case, studied previously by \cite{HSVZ}, is when $X_t$ is a family of quartic K3 surfaces degenerating into the union of two quadrics.

The Calabi-Yau metric on $X_t$ is fibred over an interval. The ends of the interval correspond to the two regions $D_1\setminus D_2$ and $D_2\setminus D_1$, and the metrics therein are modelled on the Tian-Yau metrics, whose asymptotes match up with the Calabi ansatz.  The transition between the two Calabi ansatzs on the two ends is modelled on an Ooguri-Vafa type metric, obtained by a generalized Gibbons-Hawking type ansatz.

Now algebro-geometrically, our Tian-Yau type space $\mathbb{CP}^n\setminus D_1\cup D_2$ can be imagined as the limit of $\mathbb{CP}^n\setminus X_t  $ as $t\to 0$.  It is then natural (but somewhat na\"ive) to imagine taking the usual Tian-Yau metric construction on $\mathbb{CP}^n\setminus X_t$, and try to extract limits. It is then not surprising that the Tian-Yau metric on $D_1\setminus D_2$ and $D_1\setminus D_2$ should appear in the asymptotic description of the metric on $\mathbb{CP}^n\setminus D_1\cup D_2$, even though the precise scaling factors of these Tian-Yau regions do not seem to be predicted by this na\"ive limit. However, the Ooguri-Vafa type region in \cite{SunZhang} has no direct relation to the generalized Calabi ansatz in our construction.

There is a further way our construction is related to a natural generalization of \cite{SunZhang}:
\[
X_t= \{  F_1 F_2 F_3+ tF=0 \}\subset \mathbb{CP}^{n}, \quad 0<|t|\ll 1. 
\]
The algebro-geometric limit as $t\to 0$ is the union of transversely intersecting hypersurfaces $D_1, D_2, D_3$. One can similarly ask for the description of the Calabi-Yau metric on $X_t$ for small $t$. It is quite conceivable that the metric model in the region $D_1\setminus D_2\cup D_3$ (and the cyclic permutations) is provided by our construction, although how the transition happens between these three ends is an interesting open problem, which likely involves a further generalization of the Ooguri-Vafa type metric in \cite{SunZhang}.

As a more general remark, we think the higher $m$ version of the NA MA equation in this paper is the noncompact analogue of the NA MA equation appearing in the collapsing case of polarized degeneration of Calabi-Yau metrics explained in \cite{NASYZ}, and we expect the generalized Calabi ansatz to be relevant for local metric models in the polarized degenerations.

\subsubsection{Exotic metrics on $\C^n$}

There are a number of recent constructions of complete Calabi-Yau metrics on $\C^n$ with $ n\geq 3$, that share the unifying theme of \textbf{holomorphic fibrations}. The works \cite{LiC3}\cite{Gabor}\cite{Ronan} start with a holomorphic fibration given by a weighted homogeneous polynomial $F: \C^n\to \C$, such that $F^{-1}(0)$ carries a Sasakian-Einstein cone metric. The other fibres carry asymptotically conical Calabi-Yau metrics modelled at infinity on $F^{-1}(0)$. From this, one builds a Calabi-Yau metric on $\C^n$, whose fibrewise restrictions are approximated by these Calabi-Yau metrics on the fibres outside of a compact region, and in the horizontal direction is approximated by the pullback of the Euclidean metric on $\C$. Such metrics on $\C^n$ have \emph{maximal volume growth}, and the tangent cone at infinity is $F^{-1}(0)\times \C$ with the product metric. From an algebro-geometric perspective, the singularities on $F^{-1}(0)$ are klt, which should be viewed as mild singularities. The coordinate functions on $\C^n$ all have polynomial growth with respect to the geodesic distance to the origin, even though the growth rates are typically not linear.

\begin{rmk}\label{growthorder}
	One moral is that the holomorphic structure alone is very far from specifying the metric. The recent uniqueness result \cite{Gaboruniqueness} suggests that an additional filtration structure associated with the growth of holomorphic functions is key to the uniqueness and classification of the metrics.
\end{rmk}

More recently a family of new Taub-NUT type Calabi-Yau metrics were constructed on $\C^3$ \cite{LiTaubNUT}, using a generalized Gibbons-Hawking framework. The asymptotic geometry near infinity is generically a $T^2$-fibration over $\R^4$, but along three rays inside $\R^4$ the metric looks like a Taub-NUT fibration over a cylinder. These metrics are fundamentally different in that it has volume growth order $Vol(B(r))\sim O(r^4)$ coming from the $\R^4$ direction, which is not maximal volume growth. Algebro-geometrically, these metrics are associated with the holomorphic fibration
\[
F(z_1, z_2, z_3)= z_1z_2z_3: \C^3\to \C,
\]
whose fibres are generically cylinders, contributing two dimensions to $T^2$ and two other dimensions to $\R^4$. The $\C$ factor contributes the other two dimensions to $\R^4$. Notice the singular fibres are reducible, and the nature of the singularity is much worse than klt. In terms of the growth of holomorphic functions, only $z_1z_2z_3$ has polynomial growth, while $z_1, z_2, z_3$ individually all have exponential type growth, meaning that $\log |z_k|$ has polynomial growth.

\begin{rmk}
The Taub-NUT metric is a much more classical prototype. Its asymptotic geometry is an $S^1$-bundle over $\R^3$, with non-maximal volume growth $Vol(B(r))=O(r^3)$. Algebro-geometrically the Taub-NUT is associated with the fibration $\C^2\xrightarrow{z_1z_2}\C$, whose fibres are cylinders. The holomorphic function $z_1z_2$ has polynomial growth, while $z_1, z_2$ individually have exponential growth.
\end{rmk}

In view of these constructions, the new feature of this paper is that in the generalized Calabi ansatz, the approximately Calabi-Yau  fibres do not appear as fibres of holomorphic fibrations, but rather come from the effective description of an iterated non-holomorphic fibration. The algebraic functions on $X$ have exponential type growth.
The prototype of these phenomena is of course already known in the case of the Calabi ansatz, but we believe the NA MA equation points towards a much larger generality of examples, not limited to ODE reduction methods.

\begin{rmk}
A recent paper of Biquard and Delcroix \cite{BiquardDelcroix} constructs Calabi-Yau metrics on certain rank 2 complex symmetric spaces using small cohomogeneity methods. A real Monge-Amp\`ere type ODE \cite[Prop 2.3]{BiquardDelcroix} also plays a prominent role, and the relation with our construction seems to deserve some further investigation.
\end{rmk}

%\subsubsection{Rank 2 complex symmetric spaces}

\subsubsection{Non-archimedean meaning?}\label{nonarchimedeanmeaning}

As we mentioned before, the generalized Calabi ansatz was discovered in an attempt to interpret the non-archimedean version of the Monge-Amp\`ere equation in the context of polarized degenerations \cite{NASYZ}. This relation to non-archimedean geometry remains conjectural,
because at least some regularity is needed in order for the NA MA equation to admit a metric interpretation, which is unfortunately still not proven even in some cases where the Calabi-Yau metric is completely understood, such as the case studied in \cite{SunZhang}. The reader is thus warned that the following discussions will be rather speculative; they are meant to provide a more general and higher brow perspective to section \ref{TianYauproblem}, and to motivate directions of future research.

In the polarized degeneration setting, one associates dual complexes to SNC models (or more generally dlt models) of the degeneration family. When the SNC models are related by blow ups with centres supported on the central fibre, then there exist comparison maps between the dual complexes, and the Berkovich space is the inductive limit of these dual complexes. One should think of the Berkovich space as encoding the pure birational geometry of the degeneration family. The polarization provides an extra positive line bundle structure on the Berkovich space, which one should imagine as metric information. One can make sense of the non-archimedean version of plurisubharmonic functions and semipositive metrics, and associate the non-archimedean Monge-Amp\`ere measure. The foundational result of this non-archimedean pluripotential theory is that one can solve the non-archimedean Calabi conjecture on the Berkovich space by a variational method, as is expertly surveyed in \cite{Boucksom}.

Now in the noncompact setting, the natural analogue of SNC models is SNC pairs $(\bar{X},D)$ with $X=\bar{X}\setminus D$,\footnote{In general $\bar{X}$ needs not be Fano.} to which one can associate dual complexes and build up a version of the Berkovich space. To the author's knowledge, non-archimedean pluripotential theory has not been developed in this noncompact setting, but the key point we would like to suggest is that the non-archimedean Monge-Amp\`ere equation in this conjectural theory, should be equivalent to the differential geometric version (\ref{NAMA}), and its purpose is to prescribe the asymptotic of the K\"ahler potential.

The Berkovich space by itself only has the complex geometric information, and by Remark \ref{growthorder} we know that this is far from sufficient to specify the metric. Another problem with the non-compact setting, is that the Calabi-Yau volume is infinite. In the concrete setting of this paper, the problem of infinity is essentially solved by imposing the \emph{homogeneity ansatz}, which reduced the NA MA equation to a boundary value problem with two ends, which is morally a compact problem. Now the geometric meaning of the homogeneity ansatz has to do with the growth order of the algebraic functions with respect to the geodesic distance. This growth information goes beyond pure complex geometry, and knows something about the metric. We would like to suggest it plays a similar role to the positive line bundle in the context of polarized degenerations. Once this is taken into account, one can at least hope for a non-archimedean Calabi conjecture type result in this noncompact setting.

The above picture fits quite well with a heuristic principle of Yau, that  complete non-compact Calabi-Yau manifolds should (under mild conditions) admit a natural (quasi-)projective compactifications. When this compactification is projective, one may hope that under an additional specification of the homogeneity ansatz, the non-archimedean geometry produces a version of the NA MA solution, which one can then use to prescribe the asymptotic K\"ahler potential in the generic region. One then tries to find a completion of the ansatz metric, and hopes that a (highly elaborate) application of the Tian-Yau existence proof would eventually construct a Calabi-Yau metric.

\begin{rmk}
Currently it is an art to guess an appropriate homogeneity ansatz (\ie the growth order of the algebraic functions). Compatibility with the positivity requirements of K\"ahler geometry makes this a highly delicate issue. Could there be some connections to stability conditions?
\end{rmk}

This very large pool of potential examples still do not exhaust the full richness of the complete Calabi-Yau metrics. The reason is that in general Yau's compactification is only a \emph{partial compactification} into a  quasi-projective variety. A typical phenomenon is that there is a holomorphic fibration to a lower dimensional variety, and the partial compactification amounts to the compactification of the fibres. For instance, the Taub-NUT metric on $\C^2$ can be compactified into the rational surface
\[
\{  ( [X_0:X_1:X_2], y)| X_1X_2= yX_0^2         \}  \subset \mathbb{CP}^2\times \C_y,
\] 
where we added in two compactification divisors $D_1= \{ X_0=X_1=0  \}$ and $D_2= \{ X_0=X_2=0\}$. These divisors encode the exponential growth of the coordinate functions in the fibre direction, and are responsible for the fact that the fibrewise metric restrictions are approximately cylindrical. A very similar phenomenon happens with the Taub-NUT type metric on $\C^3$ mentioned above. The upshot is that by mixing the holomorphic fibration with the NA MA ansatz, one can hope to generate an even larger supply of Calabi-Yau metrics.

\section{More on the ODE reduction}\label{MoreonODE}

The aim of this section is to study further the ODE reduction (\ref{ODE1}) from the special case of the NA MA equation. It turns out the ODE can be solved exactly, and the solution is related to the hypergeometric function.

%In particular, we will show that the boundary condition (\ref{ODEboundary2}) indeed admits a unique fractional power series solution, and the solution can be extended at least to $t\leq 1$ including a small neighbourhood of the endpoint. We will then use an intermediate value theorem argument (relying partially on computer numerics) to show that the ODE admits a global solution on $0<t<+\infty$ with our boundary conditions, satisfying a certain symmetry with respect to $t\to t^{-1}$. 

\subsection{Reformulations of the ODE}

\subsubsection*{First reformulation of the ODE}

The ODE~(\ref{ODE1}) can be somewhat further simplified:

\begin{lem}
	Under the substitution $w= \frac{n+2}{n} v^{n/(n+2)}$, the ODE (\ref{ODE1}) is equivalent to
	\begin{equation}\label{ODE2}
	w''(  w+(1-t)w')^{n-2} = \text{const} \cdot w^{-3}.
	\end{equation}
\end{lem}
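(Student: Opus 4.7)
The proof is a direct substitution computation; there is no real structural difficulty, only bookkeeping of exponents. My plan is to verify that the substitution $w=\tfrac{n+2}{n}v^{n/(n+2)}$ (which is invertible for $v>0$ since we are working in the regime where $v,w>0$) transforms each factor on the left-hand side of (\ref{ODE1}) into a power of $w$ times a clean expression in $w$ and its derivatives, and then check that the powers combine to give $w^3$ so that (\ref{ODE2}) falls out.

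First I would invert the substitution as $v=\bigl(\tfrac{n}{n+2}\bigr)^{(n+2)/n}w^{(n+2)/n}$ and compute
\[
w'=v^{-2/(n+2)}v',\qquad w''=v^{-(n+4)/(n+2)}\Bigl(vv''-\tfrac{2}{n+2}v'^{2}\Bigr).
\]
The second identity immediately identifies the first factor in (\ref{ODE1}):
\[
vv''-\tfrac{2}{n+2}v'^{2}=v^{(n+4)/(n+2)}w''.
\]
Next I would handle the factor $\tfrac{n+2}{n}v+(1-t)v'$. Using $v'=v^{2/(n+2)}w'$ and rewriting $\tfrac{n+2}{n}v=\bigl(\tfrac{n}{n+2}\bigr)^{2/n}w^{(n+2)/n}$, both terms share the common factor $\bigl(\tfrac{n}{n+2}\bigr)^{2/n}w^{2/n}$, so I obtain
\[
\tfrac{n+2}{n}v+(1-t)v'=\bigl(\tfrac{n}{n+2}\bigr)^{2/n}w^{2/n}\bigl(w+(1-t)w'\bigr).
\]

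Finally I would multiply the two computed factors, raising the second to the $(n-2)$ power. The overall power of $w$ accumulates as $(n+4)/n+2(n-2)/n=3$, and the constant prefactors combine into a single power of $n/(n+2)$. Absorbing that into the right-hand side constant of (\ref{ODE1}) yields
\[
w^{3}w''\bigl(w+(1-t)w'\bigr)^{n-2}=\text{const},
\]
which is (\ref{ODE2}). The only step that requires any care is tracking the exponents on $v$ when differentiating $w$ twice and confirming that the fractional powers of $v$ reassemble into exactly $w^{3}$ — this is the single "aha" of the computation, but it is forced by the scaling symmetry that made the homogeneous ansatz work in the first place.
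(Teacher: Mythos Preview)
Your proof is correct and follows essentially the same approach as the paper: both compute $w'=v^{-2/(n+2)}v'$ and $w''=v^{-(n+4)/(n+2)}\bigl(vv''-\tfrac{2}{n+2}v'^2\bigr)$, identify the two factors of (\ref{ODE1}) with $w''$ and $w+(1-t)w'$ up to powers of $v$ (equivalently $w$), and observe that the accumulated exponent is exactly $3$. The only cosmetic difference is that you also track the explicit constant $(n/(n+2))^3$ absorbed into the right-hand side, which the paper leaves implicit.
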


\begin{proof}
	Observe
	\[
	\left(\frac{v'}{v^{2/(n+2) }} \right)'= \frac{ vv''- \frac{2}{n+2} v'^2 }{ v^{ (n+4)/(n+2) } }.
	\]
	We can rewrite the ODE (\ref{ODE1}) as
	\[
	\frac{ vv''- \frac{2}{n+2} v'^2 }{ v^{ (n+4)/(n+2) } } \left(   \frac{n+2}{n} v^{n/(n+2)}  +(1-t) \frac{ v'}{v^{2/(n+2)} }               \right)^{n-2}= \text{const}\cdot  v^{-3n/(n+2)}.
	\]
	Now \[
	w= \frac{n+2}{n} v^{n/(n+2)} ,\quad w'= \frac{v'}{v^{2/(n+2)} }, \quad w''= \frac{ vv''- \frac{2}{n+2} v'^2 }{ v^{ (n+4)/(n+2) } }, 
	\]
	so the ODE simplifies to (\ref{ODE2}) after slightly modifying the constant. 
\end{proof}

\begin{rmk}\label{Kahlerconditionrmk2}
	The K\"ahler condition (\cf Remark \ref{Kahlerconditionrmk1}) translates into
	\begin{equation*}
	w>0,\quad w''>0, \quad w+(1-t)w'>0. 
	\end{equation*}
\end{rmk}

\begin{rmk}\label{ODEsymmetryrmk2}
	The symmetry of the ODE (\cf Remark \ref{ODEsymmetryrmk1}) translates into the following. Let
	\[
	w(t)= t \tilde{w}(1/t),
	\]
	then
	\[
	w'= \tilde{w}- t^{-1} \tilde{w}', \quad w''= t^{-3} \tilde{w}'', \]
	\[
	w+(1-t)w'= \tilde{w}+ (1-t^{-1}) \tilde{w}',\quad w'' w^3= \tilde{w}'' \tilde{w}^3. 
	\]
	Thus if $w$ solves (\ref{ODE2}), then so does $\tilde{w}$.
\end{rmk}

In terms of the substitution $w= \frac{n+2}{n} v^{n/(n+2)}$, the boundary condition at $t\to 0$ becomes
\begin{equation}\label{ODEboundary2}
\begin{cases}
& w= w_0+O(t),\quad w_0=  \frac{n+2}{n} v_0^{n/(n+2)},
\\
& w'= - w_0+ b_1 t^{1/(n-1)} +O(t), \quad b_1= av_0^{-2/(n+2)},
\\
& w''=  \frac{b_1}{n-1} t^{-(n-2)/(n-1)} +O(1). 
\end{cases}
\end{equation}
In the \textbf{normalization} of the ODE
\begin{equation}\label{ODE3}
w'' w^3( w+(1-t)w')^{n-2}= \frac{1}{n-1},
\end{equation}
we would have
\[
b_1^{n-1} w_0^3 =1. 
\]
Notice $w_0$ determines $b_1$, which means this boundary condition at $t=0$ comes in a 1-parameter family, instead of the generic 2-parameter family for second order ODEs. The $t=+\infty$ end has a closely related boundary condition via the ODE symmetry (\cf Remark \ref{ODEsymmetryrmk2}), which also arises in a 1-parameter family, so one expects the global solutions to the ODE to be isolated.

\subsubsection*{Second reformulation of the ODE}

We write for $0<t<1$,
\begin{equation}\label{wfrak}
s= \frac{1}{1-t} \in (1,\infty) , \quad \mathfrak{w}(s)= \frac{w(t)}{1-t}.
\end{equation}
Then
\[
\frac{ d\mathfrak{w}}{ds}= (1-t)^2 \frac{ d\mathfrak{w}}{dt} = (1-t)^2 \frac{ d}{dt} ( \frac{w}{1-t})= (1-t) w'+ w.
\]
\[
\frac{ d^2 \mathfrak{w}} {ds^2}= (1-t)^2 \frac{ d}{dt} ( (1-t) w'+ w) = (1-t)^3 w''. 
\]
Thus the ODE (\ref{ODE3}) can be reformulated as
\begin{equation}\label{ODE4}
\frac{d}{ds}( \frac{ d\mathfrak{w}}{ds}   )^{n-1}   = (n-1)\frac{ d^2 \mathfrak{w}} {ds^2} (\frac{ d\mathfrak{w}}{ds}   )^{n-2} = \mathfrak{w}^{-3}.
\end{equation}
The K\"ahler condition (\cf Remark \ref{Kahlerconditionrmk2}) translates into
\begin{equation}
\mathfrak{w}>0, \quad \frac{ d\mathfrak{w}}{ds} >0, \quad \frac{ d^2 \mathfrak{w}} {ds^2}>0.
\end{equation}
The initial condition at $s=1$ is
\begin{equation}\label{ODE4initial}
\mathfrak{w}= w_0 +O(t), \quad \frac{ d\mathfrak{w}}{ds}= b_1 t^{1/(n-1)} +O(t), \quad t= 1-s^{-1}.
\end{equation}

\subsection{First integral of the ODE}

\begin{lem}
If $\mathfrak{w}$ solves the ODE (\ref{ODE4}) with the initial condition (\ref{ODE4initial}), then
\begin{equation}\label{firstintegral}
\frac{n-1}{n} ( \frac{ d\mathfrak{w}}{ds})^{n} + \frac{1}{2 \mathfrak{w}^2} = \frac{1}{2w_0^2}.
\end{equation}
\end{lem}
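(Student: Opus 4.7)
The plan is to recognize the ODE as an exact energy conservation law: after multiplying by an integrating factor, both sides become total $s$-derivatives, and then one fixes the constant of integration using the boundary data at $s=1$.

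Concretely, I would start from the form
\[
\frac{d}{ds}\left(\frac{d\mathfrak{w}}{ds}\right)^{n-1} = \mathfrak{w}^{-3}
\]
of equation (\ref{ODE4}) and multiply both sides by $\mathfrak{w}' := d\mathfrak{w}/ds$. On the left I get $(n-1)(\mathfrak{w}')^{n-1}\mathfrak{w}'' = \frac{n-1}{n}\frac{d}{ds}(\mathfrak{w}')^{n}$, and on the right I get $\mathfrak{w}^{-3}\mathfrak{w}' = -\frac{1}{2}\frac{d}{ds}(\mathfrak{w}^{-2})$. Hence
\[
\frac{d}{ds}\left(\frac{n-1}{n}(\mathfrak{w}')^{n} + \frac{1}{2\mathfrak{w}^{2}}\right) = 0,
\]
so the quantity in parentheses is a constant $C$ along any solution.

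To identify $C$, I would evaluate at $s=1^{+}$, i.e.\ $t=0^{+}$. The initial condition (\ref{ODE4initial}) gives $\mathfrak{w}(s)\to w_0$ and $\mathfrak{w}'(s) = b_1 t^{1/(n-1)} + O(t) \to 0$ as $t\to 0^+$ (since $n\geq 3$ so $1/(n-1)>0$, and via $t = 1 - s^{-1}$ the map $s\mapsto t$ is smooth with $t(1)=0$). Therefore $(\mathfrak{w}')^{n}\to 0$ and $\frac{1}{2\mathfrak{w}^{2}}\to \frac{1}{2w_{0}^{2}}$, which pins down $C = 1/(2w_{0}^{2})$ and yields (\ref{firstintegral}).

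There is really no obstacle in this argument; it is a routine first-integral computation. The only point worth checking carefully is that the boundary limits exist and the integration constant is indeed captured by $w_{0}$ alone, which is exactly what the one-parameter nature of the boundary condition (noted after (\ref{ODE3})) predicts. A minor remark: the identity can be read as the statement that the energy function $E(\mathfrak{w},\mathfrak{w}') = \frac{n-1}{n}(\mathfrak{w}')^n + \frac{1}{2\mathfrak{w}^2}$ is conserved for this non-autonomous ODE despite the absence of a manifest Hamiltonian structure, which is a useful observation for later extracting the asymptotics as $s\to\infty$ (i.e.\ $t\to 1^-$) and for solving the ODE explicitly via separation of variables in terms of hypergeometric functions.
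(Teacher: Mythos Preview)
Your argument is correct and is essentially the same as the paper's: both obtain the conservation law by recognizing that $\frac{n-1}{n}(\mathfrak{w}')^n + \frac{1}{2\mathfrak{w}^2}$ has vanishing $s$-derivative along solutions of (\ref{ODE4}), and then fix the constant from the initial data at $s=1$. One small slip: the ODE (\ref{ODE4}) is autonomous in $s$, so your closing remark about it being ``non-autonomous'' is inaccurate, though this does not affect the proof.
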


\begin{proof}
We differentiate
\[
\begin{aligned}
\frac{d}{ds}\left(\frac{(n-1)}{n} \left(\frac{d\mathfrak{w}}{ds}\right)^{n} + \frac{1}{2\mathfrak{w}^2}\right) &= (n-1)\left(\frac{d\mathfrak{w}}{ds}\right)^{n-1}\left(\frac{d^2\mathfrak{w}}{ds^2}\right) - \mathfrak{w}^{-3}\frac{d\mathfrak{w}}{ds}\\
&=\left(\frac{d\mathfrak{w}}{ds}\right)\left(\frac{d}{ds}\left(\frac{d\mathfrak{w}}{ds}\right)^{n-1} - \mathfrak{w}^{-3}\right) \\
&=0
\end{aligned}
\]
Now the result follows by observing that, from the initial conditions we have
\[
\left(\frac{(n-1)}{n} \left(\frac{d\mathfrak{w}}{ds}\right)^{n} + \frac{1}{2\mathfrak{w}^2}\right)\bigg|_{s=1} = \frac{1}{2w_0^2}.
\]

\end{proof}

This first integral can be solved explicitly. 
Rewriting the first integral in terms of the rescaled variables $\frac{\mathfrak{w} }{w_0}$ and $\left( \frac{2(n-1)}{n} w_0^{n+2}  \right)^{-1/n} s$, the ODE simplifies to the form
\[
f'^n= 1- f^{-2}.
\]
We introduce the function
\[
F(x)= \int_1^x  (1- y^{-2})^{-1/n} dy,
\] 
then 
\begin{equation}\label{implicitsolution}
F(  \frac{\mathfrak{w} }{w_0} )= \left( \frac{2(n-1)}{n} w_0^{n+2}  \right)^{-1/n} (s-1).
\end{equation}
Inverting $F$ solves $\mathfrak{w}$ as a function of $s$. It is clear that 
\[
\mathfrak{w}>0, \quad \frac{ d\mathfrak{w}}{ds} >0.
\]
The ODE (\ref{ODE4}) then implies $\frac{ d^2\mathfrak{w}}{ds^2} >0$, namely the K\"ahler condition is satisfied. We remark that $F$ can be expressed in terms of the hypergeometric functions, \cf the Appendix.

We then check the initial conditions and the analyticity of the solution near $s=1$.

\begin{cor}\label{ODEanalyticity}
The function $\mathfrak{w}(s)$ is a power series in $(s-1)^{n/(n-1)}$ near $s=1$. To leading orders
\begin{equation}\label{ODEasymptoterefined}
\mathfrak{w}=w_0+ \frac{n-1}{n} w_0 ^{- \frac{3}{n-1}}(s-1)^{\frac{n}{n-1}} +O((s-1)^{ \frac{2n}{n-1}  }  ).
\end{equation}
\end{cor}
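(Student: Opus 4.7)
The strategy is to invert the implicit solution \eqref{implicitsolution} analytically, after isolating the fractional-power endpoint singularity of $F$ at $x=1$, and then reading off the leading term.

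First I would analyze $F(x)=\int_1^x(1-y^{-2})^{-1/n}\,dy$ near $x=1$. Factoring $1-y^{-2}=(y-1)(y+1)/y^2$ exhibits $(1-y^{-2})^{-1/n}=(y-1)^{-1/n}h(y)$, where $h(y)=\big(y^2/(y+1)\big)^{1/n}$ is analytic at $y=1$ with $h(1)=2^{-1/n}$. Integrating the Taylor expansion of $h$ about $y=1$ term by term yields a convergent expansion
\[
F(x)=(x-1)^{(n-1)/n}\,\widetilde F(x-1),
\]
where $\widetilde F$ is analytic at $0$ with $\widetilde F(0)=\tfrac{n}{n-1}\,2^{-1/n}\neq 0$.

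Raising \eqref{implicitsolution} to the power $n/(n-1)$ and setting $\xi:=\mathfrak{w}/w_0-1$ converts the relation into
\[
\xi\,G(\xi)=\Big(\tfrac{2(n-1)}{n}w_0^{n+2}\Big)^{-1/(n-1)}(s-1)^{n/(n-1)},
\]
with $G(\xi):=\widetilde F(\xi)^{n/(n-1)}$ analytic at $0$ and $G(0)=\big(\tfrac{n}{n-1}\big)^{n/(n-1)}2^{-1/(n-1)}\neq 0$. The analytic inverse function theorem now inverts this equation, giving $\xi$ as a convergent power series in $\mu=(s-1)^{n/(n-1)}$ with no constant term; hence $\mathfrak{w}=w_0(1+\xi)$ is a convergent power series in $(s-1)^{n/(n-1)}$ near $s=1$, establishing the analyticity claim.

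To pin down the leading coefficient, use $\xi=\mu/G(0)+O(\mu^2)$; multiplying $1/G(0)=\big(\tfrac{n-1}{n}\big)^{n/(n-1)}2^{1/(n-1)}$ into the normalizing constant and collecting powers of $2$ and of $\tfrac{n-1}{n}$ collapses everything cleanly to
\[
\xi=\tfrac{n-1}{n}\,w_0^{-(n+2)/(n-1)}\,(s-1)^{n/(n-1)}+O\big((s-1)^{2n/(n-1)}\big),
\]
and multiplying through by $w_0$ reproduces \eqref{ODEasymptoterefined}. No step is expected to be subtle: the heart of the argument is recognizing the $(x-1)^{(n-1)/n}$ endpoint structure of $F$, after which the rest is analytic inversion and constant bookkeeping, automatically consistent with the boundary data \eqref{ODEboundary2}, where $b_1^{n-1}w_0^3=1$ forces $b_1=w_0^{-3/(n-1)}$.
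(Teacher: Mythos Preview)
Your proposal is correct and follows essentially the same route as the paper: both extract the $(y-1)^{-1/n}$ endpoint singularity of the integrand to write $F(x)=(x-1)^{(n-1)/n}\cdot(\text{analytic})$, raise the implicit relation \eqref{implicitsolution} to the power $n/(n-1)$, and then invert analytically to recover $\mathfrak{w}/w_0-1$ as a power series in $(s-1)^{n/(n-1)}$ with the stated leading coefficient. Your version is slightly more explicit in naming the intermediate functions and invoking the analytic inverse function theorem, but there is no substantive difference.
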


\begin{proof}
Notice near $y=1$, the function  
\[
(1- y^{-2})^{-1/n}= (y-1)^{-1/n}\times \text{Taylor series in $y-1$ with constant term $2^{-1/n}$}.
\]
Upon integration,
\[
F(x)= (x-1)^{(n-1)/n}\times \text{Taylor series in $x-1$  with constant term $2^{-1/n}\frac{n}{n-1}$}.
\]
Raising (\ref{implicitsolution}) to the power $n/(n-1)$, we see 
\[
 w_0 ^{- \frac{n+2}{n-1}}(s-1)^{n/(n-1)}= \text{Taylor series in $ \frac{\mathfrak{w} }{w_0}  -1$ with first coefficient $\frac{n}{n-1}$}.
\]
Inverting the function, $ \frac{\mathfrak{w} }{w_0}  -1$ is a power series of $(s-1)^{n/(n-1)}$ near $s=1$. To leading order,
\[
\frac{\mathfrak{w}}{w_0}-1= \frac{n-1}{n} w_0 ^{- \frac{n+2}{n-1}}(s-1)^{\frac{n}{n-1}}\left( 1+O( (s-1)^{\frac{n}{2n-1}}  )\right), 
\]
\[
\frac{ d\mathfrak{w}}{ds} \approx w_0 ^{- \frac{3}{n-1}}(s-1)^{1/(n-1)}= b_1  (s-1)^{1/(n-1)},
\]
which agrees with the initial condition (\ref{ODE4initial}).
\end{proof}

\subsection{Global matching problem}

We have solved the ODE with the prescribed initial condition, on the interval $0<t<1$. For the application to the generalized Calabi ansatz, we need solutions over $0<t<\infty$, and for this purpose the ODE (\ref{ODE4}) is inadequate. From the ODE (\ref{ODE3}), it is however a priori clear that $t=1$ is not a singularity.

\begin{lem}
For any given $w_0>0$, the solution to the ODE (\ref{ODE3}) exists smoothly on $0<t<1+\epsilon(w_0)$ for some $\epsilon(w_0)>0$. 
\end{lem}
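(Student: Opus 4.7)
The plan is to combine the explicit solution on $(0,1)$ already constructed via $\mathfrak{w}(s)$ with the observation that $t=1$ is in fact a regular point of the ODE (\ref{ODE3}), and then extend locally past $t=1$ by the standard Cauchy--Lipschitz theorem. Concretely, if one can show that $w(t)$ and $w'(t)$ admit finite limits as $t \to 1^-$ with $w(1) > 0$, then the ODE rewritten in normal form
\[
w'' = \frac{1}{(n-1)\,w^3\,(w + (1-t)w')^{n-2}}
\]
has a real-analytic right-hand side in a neighborhood of $(1, w(1), w'(1))$; Picard--Lindel\"of at $t=1$ then yields a smooth solution on $(1-\delta, 1+\epsilon)$ for some $\epsilon>0$, and uniqueness of the Cauchy problem glues this to the already-constructed solution on $(0,1)$.

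The heart of the matter is the asymptotic analysis of $\mathfrak{w}(s)$ as $s \to \infty$ (which corresponds to $t \to 1^-$). From the first integral (\ref{firstintegral}) one reads off both the global bound $\mathfrak{w}'(s) < c := \left(\frac{n}{2(n-1)w_0^2}\right)^{1/n}$ on $[1,\infty)$ (so $\mathfrak{w}$ has at most linear growth and is therefore defined on the whole half-line) and, expanding $\frac{n-1}{n}(\mathfrak{w}')^n = \frac{1}{2w_0^2} - \frac{1}{2\mathfrak{w}^2}$ around $\mathfrak{w} = \infty$, the quantitative refinement $\mathfrak{w}'(s) - c = O(\mathfrak{w}^{-2}) = O(s^{-2})$. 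Since $O(s^{-2})$ is integrable at infinity, this gives $\mathfrak{w}(s) = c\,s + \eta_\infty + O(s^{-1})$ for some finite constant $\eta_\infty$.

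Substituting back via $w(t) = (1-t)\,\mathfrak{w}(1/(1-t))$ and using the identity $(1-t)s = 1$, direct computation yields
\[
w(t) = c + (1-t)\bigl(\mathfrak{w}(s) - c\,s\bigr) \longrightarrow c, \qquad w'(t) = -(\mathfrak{w}(s) - c\,s) + s(\mathfrak{w}'(s) - c) \longrightarrow -\eta_\infty
\]
as $t \to 1^-$. Thus $w$ extends $C^1$-smoothly to $t = 1$ with $w(1) = c > 0$, and the non-degeneracy condition $w(1) + (1-1)w'(1) = c > 0$ is satisfied. The strategy of the first paragraph then produces the smooth extension to $(0, 1+\epsilon(w_0))$.

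The main point requiring care is extracting sufficient quantitative control on $\mathfrak{w}'(s) - c$ to conclude that $s(\mathfrak{w}'(s) - c) \to 0$: this is what ensures that $w'(t)$, not merely $w(t)$, has a finite limit at $t = 1$. The qualitative statement $\mathfrak{w}'(s) \to c$ alone would not suffice. Everything else is routine: the global existence of $\mathfrak{w}$ on $[1,\infty)$ is immediate from the first integral, and the continuation past $t=1$ reduces to the standard existence theorem at a regular point of the ODE.
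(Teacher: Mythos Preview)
Your proof is correct but proceeds along a genuinely different route from the paper's. You lean on the explicit first integral (\ref{firstintegral}) and the $\mathfrak{w}(s)$ reformulation, extracting the precise asymptotics $\mathfrak{w}(s) = cs + \eta_\infty + O(s^{-1})$ and $\mathfrak{w}'(s) = c + O(s^{-2})$ to show that $w$ and $w'$ have finite limits at $t=1$ with $w(1)=c>0$, after which Picard--Lindel\"of at a regular point finishes. The paper, by contrast, gives a self-contained a priori argument that never invokes the first integral: it observes that $(1-t)w'+w$ is increasing (since $\frac{d}{dt}\bigl((1-t)w'+w\bigr) = (1-t)w'' > 0$) and that $\frac{d}{dt}\bigl((1-t)w'+w\bigr)^{n-1} = (1-t)/w^3$, so the only possible obstruction to continuation is $w$ hitting zero; convexity and the boundary data $w(0^+)=w_0$, $w'(0^+)=-w_0$ then give the linear lower bound $w(t) \geq w(\epsilon) + w'(\epsilon)(t-\epsilon)$, which stays positive slightly past $t=1$.

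What each approach buys: the paper's argument is more elementary and robust---it would survive in settings where no first integral is available---but gives no quantitative information at $t=1$. Your argument is more computational but yields the exact values $w(1)=p_*$ and $w'(1)=-\eta_\infty$, which the paper has to compute separately anyway (via the Legendre transform) for the matching condition. In that sense your route is arguably more economical given the surrounding machinery, though it does rely on having already established global existence and asymptotics of $\mathfrak{w}$ on $[1,\infty)$.
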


\begin{proof}
The ODE (\ref{ODE3}) can be smoothly extended as long as $w$ remains bounded positively below and $(1-t)w'+w$ remains bounded  (which imply boundedness of $w''$, and in particular the boundedness of $w, w'$). Notice
\[
\frac{d}{dt}((1-t)w'+w)= (1-t)w''>0,
\]
so $(1-t)w'+w$ is monotone increasing, and in particular positive. By
\[
\frac{d}{dt} ((1-t)w'+w)^{n-1} = \frac{1-t}{w^3}, 
\]
we see $(1-t)w'+w$ will be bounded as long as $w$ is bounded positively below.

 The convexity of $w$ is ensured whenever the solution is smooth. Thus for some small $\epsilon>0$,
\[
w(t)\geq w(\epsilon)- w'(\epsilon)(t-\epsilon), \quad t\geq \epsilon.
\]
For small $\epsilon$, we have $w_0-w_0\epsilon <w(\epsilon)<w_0$ and $-w_0< w'(\epsilon)<0$, so $w(\epsilon)/w'(\epsilon)> 1-\epsilon$, whence $w(t)$ has an a priori lower bound slightly beyond $t=1$.
\end{proof}

Recall the ODE has a symmetry under $t\to t^{-1}$ (\cf Remark \ref{ODEsymmetryrmk2}). Our strategy to achieve both the $t=0$ and the $t=\infty$ boundary conditions, is to look for symmetric solutions:
\[
w(t)= t\tilde{w}(1/t), \quad w(t)=\tilde{w}(t).
\]
This is a functional equation on $w(t)$, and it amounts to a \textbf{matching condition} at $t=1$:
\[
w(1)=\tilde{w}(1), \quad w'(1)= \tilde{w}'(1).
\]
This is equivalent to
\begin{equation}\label{matching}
w'(1)= \frac{1}{2} w(1).
\end{equation}
The problem is then to look for $w_0>0$ to solve (\ref{matching}). The key is to extract $w'(1)$ from the asymptote of $\mathfrak{w}$ as $s\to +\infty$, namely $t\to 1$. The starting point is the identity
\begin{equation}\label{matchingLegendre}
w'(t)= s \frac{d\mathfrak{w}}{ds}- \mathfrak{w}.
\end{equation}
which means the value of $w'$ is related to the Legendre transform of $\mathfrak{w}$.

\subsection{Legendre transform}

The \textbf{Legendre transform} of the convex function $\mathfrak{w}$ is given by
\[
\mathfrak{w}^*(p) = \sup_{s\in[1,\infty)} sp-\mathfrak{w}(s).
\]
In particular, we have
\[
\mathfrak{w}^*(p) = s\frac{d\mathfrak{w}}{ds}-\mathfrak{w}(s) \qquad \text{ at } p= \frac{d\mathfrak{w}}{ds}
\]

We are now going to rewrite the first integral (\ref{firstintegral}) in terms of $\mathfrak{w}^*(p)$.  First, since $\lim_{s\rightarrow \infty} \mathfrak{w}(s)= \infty$ we see from (\ref{firstintegral}) that
\[
\frac{d\mathfrak{w}}{ds}([1,\infty)) = [0, p_*=\left(\frac{n}{2(n-1)w_0^2}\right)^{\frac{1}{n}})
\]
and so the Legendre transform is defined on this interval.  Furthermore, since $\frac{d\mathfrak{w}}{ds}(s=1)=0$ we have $\frac{d\mathfrak{w}^*}{dp}(p=0)=1$, and $\mathfrak{w}^*(p=0)=-\mathfrak{w}(s=1)=-w_0$ by the properties of the Legendre transform.  Now from the involution property of the Legendre transform,
\[
\mathfrak{w}(s(p)) = s(p)p-\mathfrak{w}^*(p) = \frac{d\mathfrak{w}^*}{dp}p-\mathfrak{w}^*(p).
\]
The first integral (\ref{firstintegral}) is rewritten as
\[
\frac{n-1}{n}p^n + \frac{1}{2}\left(\frac{d\mathfrak{w}^*}{dp}p-\mathfrak{w}^*(p)\right)^{-2} = \frac{1}{2w_0^2},
\]
namely
\[
(p \frac{d}{dp}\mathfrak{w}^{*}-\mathfrak{w}^*)^2\left(\frac{1}{w_0^2}-\frac{2(n-1)p^n}{n}\right)= 1.
\]
To simplify matters we can rescale.  Define
\[
y= \frac{1}{p_*}p, \quad  g(y)= \frac{1}{w_0}\mathfrak{w}^*.
\]
Then the ODE is recast on the interval $y\in [0,1)$ as
\begin{equation}\label{ODE5}
(y\frac{dg}{dy}-g(y))^2(1-y^n) = 1,
\end{equation}
subject to the initial conditions
\[
g(0)=-1,\quad  g'(0) = \frac{p_*}{w_0}= \left(\frac{n}{2(n-1)w_0^{2+n}}\right)^{\frac{1}{n}}.
\]
This can be integrated explicitly:

\begin{lem}
The function 
\[
g(y)= - \,_{2}F_{1}[\frac{1}{2}, -\frac{1}{n}, \frac{n-1}{n}; y^n] +\frac{p_*}{w_0}  y, 
\]
where 
$\,_{2}F_{1}[a,b,c;z]$ is the hypergeometric function (\ref{Hypergeometric}). In particular,
\[
g(1)= \frac{p_*}{w_0}  - \frac{\Gamma(1-\frac{1}{n})\sqrt{\pi}}{\Gamma(\frac{1}{2}-\frac{1}{n})}.
\]
\end{lem}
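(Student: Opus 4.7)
My plan is to reduce the second-order-looking ODE (\ref{ODE5}) to a first-order linear equation, integrate it as a power series, and then recognize the series as a hypergeometric.

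First, I would take the square root of (\ref{ODE5}). At $y=0$ the initial data give $y g'(y) - g(y)\big|_{y=0} = 0 - (-1) = 1 > 0$, so on a neighbourhood of $y=0$ the positive branch holds, and by continuity (since $(1-y^n)^{-1/2} > 0$ on $[0,1)$ never vanishes) it persists on the whole interval. Hence (\ref{ODE5}) is equivalent to the first-order linear ODE
\begin{equation*}
y g'(y) - g(y) = (1-y^n)^{-1/2}, \qquad y\in[0,1).
\end{equation*}

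Next I would solve this linear equation. The homogeneous equation $yg' = g$ has solution $g=Cy$; by variation of parameters writing $g = y v(y)$ reduces the equation to $v'(y) = y^{-2}(1-y^n)^{-1/2}$. Expanding with the generalized binomial theorem,
\begin{equation*}
(1-y^n)^{-1/2} \;=\; \sum_{k\geq 0}\frac{(1/2)_k}{k!}\,y^{nk},
\end{equation*}
so that termwise integration gives
\begin{equation*}
g(y) \;=\; C y \;+\; y\!\sum_{k\geq 0}\frac{(1/2)_k}{k!}\,\frac{y^{nk-1}}{nk-1}
\;=\; C y \;-\; 1 \;+\;\sum_{k\geq 1}\frac{(1/2)_k}{k!}\,\frac{y^{nk}}{nk-1}.
\end{equation*}
The $k=0$ contribution reproduces the initial value $g(0) = -1$, and differentiating shows that $g'(0) = C$ (since $n\geq 2$ kills all other terms at $y=0$); the initial condition $g'(0) = p_*/w_0$ therefore forces $C = p_*/w_0$.

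Finally, I would match the series to the hypergeometric. Using $(-1/n)_k/(1-1/n)_k = (-1/n)/(k-1/n) = -1/(nk-1)$, the series for $\HGF[\tfrac{1}{2},-\tfrac{1}{n},\tfrac{n-1}{n};y^n]$ collapses to
\begin{equation*}
\HGF\!\left[\tfrac{1}{2},-\tfrac{1}{n},\tfrac{n-1}{n};y^n\right]
= \sum_{k\geq 0} \frac{(1/2)_k}{k!}\,\frac{-y^{nk}}{nk-1},
\end{equation*}
which equals exactly $1 - \sum_{k\geq 1} \frac{(1/2)_k}{k!}\frac{y^{nk}}{nk-1}$. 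Hence $g(y) = -\HGF[\tfrac12,-\tfrac1n,\tfrac{n-1}{n};y^n] + \tfrac{p_*}{w_0} y$, as claimed. For $g(1)$ I would apply Gauss's summation theorem $\HGF[a,b,c;1] = \Gamma(c)\Gamma(c{-}a{-}b)/(\Gamma(c{-}a)\Gamma(c{-}b))$, whose hypothesis $c - a - b = 1/2 > 0$ is satisfied; this gives $\Gamma(\tfrac{n-1}{n})\Gamma(\tfrac{1}{2})/(\Gamma(\tfrac12{-}\tfrac1n)\Gamma(1))$, which is the stated value.

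The only nontrivial step is the Pochhammer simplification identifying the series with a $\HGF$, together with the sign/branch argument at the start; once that is in place, the computation is mechanical and the value at $y=1$ drops out of Gauss's formula.
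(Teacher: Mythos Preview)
Your proof is correct and follows essentially the same approach as the paper: take the positive square root branch (fixed by the initial data), rewrite $yg'-g=(1-y^n)^{-1/2}$ as $(g/y)'=y^{-2}(1-y^n)^{-1/2}$, integrate the binomial series termwise, use the Pochhammer identity $(-1/n)_k/((n-1)/n)_k=-1/(nk-1)$ to recognize the hypergeometric, and evaluate at $y=1$ via Gauss's theorem. The only cosmetic difference is that the paper first normalizes to $g'(0)=0$ by exploiting the $g\mapsto g+cy$ freedom and adds the linear term back at the end, whereas you carry the constant $C$ through and match it to $p_*/w_0$ directly.
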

\begin{proof}
	First note that if $g$ solves the equation, then so does $g+cy$ for any constant $c$.  Thus, we may reduce to the
	initial condition $g(0)=-1, g'(0)=0$ below. The initial condition specifies a sign choice of the square root, whence
	\[
	\frac{d}{dy}\left(\frac{g}{y}\right) = \frac{1}{y^2(1-y^n)^{\frac{1}{2}}},
	\]
and the Lemma is reduced to Prop. \ref{lem: LegendreODE}.
\end{proof}

We can implement the \textbf{matching condition} $w'(1)=\frac{1}{2}w(1)$ as follows. From (\ref{matchingLegendre}) and the definition of the Legendre transform, 
\[
w'(t=1) =  \mathfrak{w}^*(p_*) = w_0g(1)= w_0\left(\frac{p_*}{w_0}  - \frac{\Gamma(1-\frac{1}{n})\sqrt{\pi}}{\Gamma(\frac{1}{2}-\frac{1}{n})}\right).
\]
On the other hand, we have 
\[
\begin{aligned}
w(t=1) &= \lim_{s\rightarrow \infty}\frac{\mathfrak{w}(s)}{s}\\
& = \lim_{p\rightarrow p_*} \frac{p(d\mathfrak{w}^*/dp)-\mathfrak{w}^*}{(d\mathfrak{w}^*/dp)}\\
&= \lim_{p\rightarrow p_*} p-\frac{\mathfrak{w}^*}{(d\mathfrak{w}^*/dp)}
\end{aligned}
\]
but from the first integral we have $(d\mathfrak{w}^*/dp) \rightarrow \infty$ as $p\rightarrow p_*$, while $\mathfrak{w}^* \rightarrow \mathfrak{w}^*(1)<+\infty$. Thus we get
\[
w(1) =p_* = \left(\frac{n}{2(n-1)w_0^2}\right)^{1/n}.
\]
 All together, the matching amounts to solving the equation
\[
w_0\left( \frac{p_*}{w_0}-\frac{\Gamma(1-\frac{1}{n})\sqrt{\pi}}{\Gamma(\frac{1}{2}-\frac{1}{n})}\right)= \frac{1}{2}p_*,
\]
Or equivalently
\[
\frac{1}{2}\left(\frac{n}{2(n-1)w_0^2}\right)^{1/n}=w_0\frac{\Gamma(1-\frac{1}{n})\sqrt{\pi}}{\Gamma(\frac{1}{2}-\frac{1}{n})}
\]
which yields
\begin{equation}
\begin{aligned}
w_0
=\left( \frac{1}{2}\right)^{\frac{n+1}{n+2}} \left(\frac{n}{(n-1)}\right)^{\frac{1}{n+2}} \left(\frac{1}{\sqrt{\pi}}\frac{\Gamma(\frac{1}{2}-\frac{1}{n})}{\Gamma(1-\frac{1}{n})}\right)^{\frac{n}{n+2}}
\end{aligned}
\end{equation}
which tends to $\frac{1}{2}$ as $n \rightarrow +\infty$. For our purpose the important fact is that $w_0>0$ for $n\geq 3$, which means we have found the initial condition that ensures matching.

\begin{rmk}
If $n=2$, then $g(1)= \frac{p_*}{w_0}$, so $w'(t=1)= p_*$, and $w(t=1)= p_*$. The matching condition has no positive solution. Correspondingly, our generalization of the Calabi ansatz only yield nontrivial examples in dimension at least three. 
\end{rmk}

\section{Asymptotic ansatz and initial error}\label{initialerror}

The goal of this section is to produce a metric ansatz, and the main technical part is to estimate its deviation from being Calabi-Yau in weighted H\"older spaces. Our setting is a special case of section \ref{TianYauproblem}.  Let $n\geq 3$, and $\bar{X}$ be a Fano manifold of dimension $n$. The anticanonical bundle is $ (d_1+d_2)L_0$ for some positive line bundle $L_0$ over $\bar{X}$, and let $D_1, D_2$ be smooth divisors in the linear system $d_1L_0$ and $d_2L_0$ respectively, such that the intersection $Y=D_1\cap D_2$ is transverse. Lefschetz hyperplane theorem then implies that $D_1\cap D_2$ is smooth and irreducible (notice irreducibility would fail in dimension two, which is excluded in our construction). By adjunction, $Y$ is a compact Calabi-Yau manifold, and the noncompact manifold $X=\bar{X}\setminus D_1\cup D_2$ carries a natural nowhere vanishing holomorphic volume form.

\subsection{Generic region near infinity}\label{Genericregiongluingansatz}

We start from the solution $w(t)$ of the ODE (\ref{ODE3}) with the matching condition $w'(1)= \frac{1}{2}w(1)$, which guarantees the existence of the solution for $0<t<+\infty$. As discussed in section \ref{MoreonODE}, this specifies the parameter choices in (\ref{ODEboundary2})
\[
w_0= \left( \frac{1}{2}\right)^{\frac{n+1}{n+2}} \left(\frac{n}{(n-1)}\right)^{\frac{1}{n+2}} \left(\frac{1}{\sqrt{\pi}}\frac{\Gamma(\frac{1}{2}-\frac{1}{n})}{\Gamma(1-\frac{1}{n})}\right)^{\frac{n}{n+2}}, \quad b_1=  w_0^{-\frac{3}{n-1} }.
\]
The boundary behaviour near $t\to 0$ is prescribed by (\ref{ODEboundary2}). The $t\to +\infty$ boundary is determined from $w(t)=t w(1/t)$. Since the geometry of the $t\to +\infty$ limit is essentially identical to $t\to 0$, we will later only focus on $t\to 0$.

Reversing the previous ODE reductions, let 
\[
v(t)= (\frac{nw}{n+2})^{\frac{n+2}{n}}, \quad u(x_1,x_2)=  x_1^{\frac{n+2}{n} } v(t),\quad t= \frac{d_1x_2}{d_2x_1}.
\]
Then after restoring a few unpleasant constants
\[
(vv''- \frac{2}{n+2} v'^2)(\frac{n+2}{n} v+(1-t) v'  )^{n-2}= \frac{1}{n-1} (\frac{n}{n+2})^{3},
\]
\[
\det(D^2 u) (d_1 \frac{\partial u}{\partial x_1} + d_2 \frac{\partial u}{\partial x_2})^{n-2}=\frac{2n}{(n+2)^2(n-1)}  (\frac{d_1}{d_2})^{2} d_1^{n-2}.
\]
In the asymptotic formula (\ref{ODEboundary1}) for $v$ at $t\to 0$, we can recover the constants
\begin{equation}\label{v0a}
v_0=  (\frac{nw_0}{n+2})^{\frac{n+2}{n}} , \quad a= b_1 v_0^{\frac{2}{n+2} }= (\frac{n}{n+2})^{2/n} w_0^{- \frac{n+2}{n(n-1)} }.
\end{equation}

Let $S_1\in H^0(\bar{X}, d_1L_0)$ (resp. $S_2\in H^0(\bar{X}, d_2L_0)$) be the defining section of $D_1\subset \bar{X}$ (resp. $D_2$). Recall $h_{L_0}$ is the Hermitian metric on the line bundle $L_0$ over $Y$, whose curvature form is the Calabi-Yau metric on $Y$ in the class $c_1(L_0)$. We extend $h_{L_0}$ smoothly over $\bar{X}$.  This induces Hermitian metrics on $d_1L_0$ and $d_2L_0$, and in particular we can make sense of
$
x_i= -\log |S_i|.
$
A technical subtlety is that $S_1, S_2$ have the ambiguity of a multiplicative constant, which corresponds to the ambiguity of additive constants on $x_1, x_2$ of order $O(1)$, to be fixed in section \ref{TianYaubackground}. 
Very large $x_1, x_2$ corresponds to the generic region near infinity on the noncompact manifold $X$. The function $u(x_1, x_2)$ can be regarded as a K\"ahler potential on the tubular neighbourhood of $Y$ (with $Y$ deleted). Up to exponentially small error in the $x_1, x_2$ variables, the K\"ahler metric is modelled on the generalized Calabi ansatz.

The normal bundle of $Y=D_1\cap D_2\subset \bar{X}$ is $\mathcal{O}(D_1)\oplus \mathcal{O}(D_2)|_Y= d_1L_0\oplus d_2L_0|_Y$.
Using an auxiliary smooth Hermitian metric on $\bar{X}$, we can identify this normal bundle with a tubular neighbourhood of $Y\subset \bar{X}$ (eg. via normal geodesic flow). The choices of the identifications only produce errors which are exponentially small in the logarithmic variables, which will be negligible since the distance scales of the ansatz metric have power law dependence on the log variables (\cf section \ref{Basiclengthscales}).

%Recall $h_{L_0}$ is the Hermitian metric on the line bundle $L_0$ over $Y$, whose curvature form is the Calabi-Yau metric on $Y$ in the class $c_1(L_0)$. This induces Hermitian metrics on the line bundles $d_1L_0$ and $d_2L_0$, hence radius distance functions $r_1, r_2$ on the total space of $d_1L_0$ and $d_2L_0$ as in section \ref{generalizedCalabiansatz}, regarded as functions on the tubular neighbourhood of $Y$. We write $x_i= -\log r_i$ for $i=1,2$. Very large $x_1, x_2$ means that we are close to $Y\subset \bar{X}$, and on the noncompact manifold $X=\bar{X}\setminus D_1\cup D_2$,  this is the generic region near infinity. The function $u(x_1, x_2)$ can be regarded as a K\"ahler potential on the tubular neighbourhood of $Y$ (with $Y$ deleted). Up to exponentially small error in the $x_1, x_2$ variables, the K\"ahler metric is modelled on the generalized Calabi ansatz.

Setting up the H\"older norms require a little care, since the injectivity radius of the $T^2$-fibres tends to zero in the generic region near infinity. However, for $1\ll x_2\leq x_1$ (the case of $1\ll x_1\leq x_2$ being entirely similar), the harmonic radius grows like $O(x_1^{1/n}t^{\frac{1}{2(n-1)} })= O(x_1^{ \frac{1}{n}- \frac{1}{2(n-1)}  } x_2^{ \frac{1}{2(n-1)}  }) $. This motivates the weighting function
\begin{equation}
\rho= \begin{cases}
 (|x_1|+1)^{  \frac{n-2}{2n(n-1)}  } (|x_2|+1)^{ \frac{1}{2(n-1)}  },\quad  x_2\leq x_1,
 \\
 (|x_2|+1)^{ \frac{n-2}{2n(n-1)}  } (|x_1|+1)^{ \frac{1}{2(n-1)}  }, \quad x_1\leq x_2.
\end{cases}
\end{equation}
We shall only use $\rho$ up to a uniform equivalence constant; no derivative control on $\rho$ is required.
Each $Y$-fibre is covered by $O(1)$ number of charts each of length scale $O(\rho)$, such that the $T^2$-bundle is trivialized over the charts.  Given a tensor field $T$ on an $O(\rho)$ neighbourhood inside $X$, we can pass to the \emph{local universal cover} of the charts by unwrapping the $T^2$ factors. The local H\"older seminorm is
\begin{equation}\label{localHolderseminorm}
[T]_{\alpha}:= \sup_{dist(P,Q)\lesssim \rho} \rho^\alpha \frac{ |T(P)-T(Q)|}{ |P-Q|^{\alpha}  } ,
\end{equation}
using geodesic parallel transport with respect to the generalized Calabi ansatz metric on the local universal cover of the charts. The local $C^{k,\alpha}$ norm is 
\begin{equation}\label{localHolder}
\norm{T}_{k,\alpha,loc}:= \sum_{j=0}^k \sup_{dist(P,Q)\lesssim \rho} \rho^j |\nabla^j T|  + \rho^k [\nabla^k T]_{\alpha}.
\end{equation}

There is up to constant multiple a natural holomorphic volume form $\Omega$ on $X=\bar{X}\setminus D_1\cup D_2$. 
We take the normalization such that near $Y\subset \bar{X}$ (\cf (\ref{holovol}) for the model case)
\[
\Omega= (1+ f_\Omega) \prod_1^2 d\log \xi_i \wedge \Omega_Y,
\]
where $\xi_1, \xi_2$ are local defining functions of the smooth divisors $D_1, D_2$, and $f_\Omega$ is a local holomorphic function near $Y$, of order $O(|\xi_1|+|\xi_2|)$, which is exponentially small in the $x_1, x_2$ variables. Higher order derivatives of $f_\Omega$ are also exponentially small by holomorphicity.

We write
\begin{equation}\label{K0}
(dd^c u)^n =K_0 (1+ Err_1) \sqrt{-1}^{n^2} \Omega\wedge \overline{\Omega},
\end{equation}
where $Err_1$ is some error function, and $K_0$ is the (unfortunately complicated) proportionality constant in the model case of generalized Calabi ansatz
\[
K_0= \frac{\int_Y c_1(L_0)^{n-2}}{ (4\pi)^2}\frac{2n^2}{(n+2)^2}  (\frac{d_1}{d_2})^{2} d_1^{n-2}.
\]
Recall that Lemma \ref{generalizedCalabimodelisCY} says the model case is exactly Calabi-Yau.

\begin{lem}\label{volumeformerror1}(Volume form error)
In the generic region $\min(x_1, x_2)\gg 1$, we have exponential decay on the local $C^{k,\alpha}$ norm of the error function:
$\norm{Err_1}_{k,\alpha, loc}=O( e^{- c\min(x_1, x_2) })$ for some $c>0$.

\end{lem}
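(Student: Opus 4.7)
My approach is to set up local holomorphic coordinates near $Y \subset \bar X$, compute $(dd^c u)^n$ in these coordinates, track how the computation differs from the exact model identity of Lemma \ref{generalizedCalabimodelisCY}, and verify that all discrepancies are products of smooth bounded factors with positive powers of $|\xi_1|$ or $|\xi_2|$.

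\emph{Step 1: Local coordinates and structure of the potential.} Pick a point $p\in Y$ and local holomorphic coordinates $(\xi_1,\xi_2,z)$ on $\bar X$ with $\xi_i$ local defining functions of $D_i$ and $z=(z_3,\ldots,z_n)$ coordinates on $Y$. In a local frame of $L_0$, write $h_{L_0}=e^{-2\phi_0}$ with $\phi_0=\phi_0(z)+O(|\xi_1|+|\xi_2|)$, where $\phi_0(z)$ is the local potential of $\omega_Y$ on $Y$. Then $x_i=-\log|\xi_i|+d_i\phi_0$ and $\log r_i=\log|\xi_i|-d_i\phi_0$, and by pure differentiation
\[
dd^c u \;=\; A+B,\qquad A:=\sum_{ij}u_{ij}\,d\log r_i\wedge d^c\log r_j,\quad B:=(d_1u_1+d_2u_2)\,dd^c\phi_0 .
\]

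\emph{Step 2: Isolating the model piece.} Since $A$ has rank at most $2$, $(dd^c u)^n=\binom{n}{2}A^2\wedge B^{n-2}$. Decompose $d\log r_i=d\log|\xi_i|-d(d_i\phi_0)$ and $dd^c\phi_0=\omega_Y(z)+R$, where $R=O(|\xi_1|+|\xi_2|)$ is a smooth $(1,1)$-form (that also includes mixed $d\xi\wedge d\bar z$ etc.\ components). Substituting gives
\[
A^2\wedge B^{n-2}\;=\;A_0^2\wedge B_0^{n-2}\;+\;\mathcal{E},
\]
where $A_0$ replaces $d\log r_i$ by $d\log|\xi_i|$, $B_0=(d_1u_1+d_2u_2)\omega_Y$, and $\mathcal E$ collects cross terms. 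The identity computation of Section \ref{proportionallinebundles}, together with the normalization $\sqrt{-1}^{(n-2)^2}\Omega_Y\wedge\overline{\Omega}_Y=\omega_Y^{n-2}/\int_Yc_1(L_0)^{n-2}$ and the model holomorphic form $d\log\xi_1\wedge d\log\xi_2\wedge\Omega_Y$, shows that $A_0^2\wedge B_0^{n-2}=K_0\,\sqrt{-1}^{n^2}\,d\log\xi_1\wedge d\log\xi_2\wedge\Omega_Y\wedge c.c.$ exactly.

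\emph{Step 3: Exponential smallness of the error.} The error $\mathcal E$ and the discrepancy $f_\Omega$ in $\Omega$ are both combinations of smooth factors times positive powers of $\xi_1$ or $\xi_2$. Indeed, every cross term in $\mathcal E$ either replaces some $d\log|\xi_i|$ in $A_0$ by a smooth differential $d(\phi_0)_\xi$ (pulling out a $d\xi_j$, equivalently a factor $|\xi_j|$ when compared against $d\log|\xi_j|$), or replaces $\omega_Y$ in a $B_0$-factor by $R=O(|\xi_1|+|\xi_2|)$. Terms that generate too many $dz,d\bar z$ components (from the $Y$-part of $d\phi_0$) are annihilated by wedging with $\omega_Y^{n-2}$. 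The remaining terms carry an explicit factor $|\xi_1|$ or $|\xi_2|$, i.e.\ $e^{-x_1}$ or $e^{-x_2}$ times smooth stuff. Dividing by the model volume form therefore gives
\[
Err_1 = O(e^{-c\min(x_1,x_2)})
\]
pointwise, for some $c>0$.

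\emph{Step 4: Local $C^{k,\alpha}$ control.} The local seminorm (\ref{localHolder}) is computed on charts of metric-diameter $\rho$ after passing to the local universal cover. A direct computation with the base length scale (Section \ref{Basiclengthscales}) shows that inside one such chart the variables $x_1,x_2$ oscillate by $O(x^{-1/2})\ll 1$, so $e^{-c\min(x_1,x_2)}$ is essentially constant across the chart. The smooth factors multiplying $|\xi_1|$ and $|\xi_2|$ in the error are real-analytic in $(\xi_1,\xi_2,z)$ and bounded uniformly, and each $\nabla^k$ with respect to the Calabi-ansatz metric introduces at worst a polynomial-in-$x$ factor (coming from the polynomial growth of the various length scales). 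Since polynomial $\times$ exponential $=$ exponential, one obtains $\|\nabla^k Err_1\|\le C_k\rho^{-k}e^{-c\min(x_1,x_2)}$ and a corresponding H\"older bound, which is exactly the claim.

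The step I expect to require the most care is Step 2/3: keeping track of which cross terms survive the wedge with $\omega_Y^{n-2}$ and verifying that the surviving ones always carry at least one factor of $|\xi_1|$ or $|\xi_2|$. Once this combinatorial bookkeeping is done, Step 4 is routine because the harmonic radius, all length scales of the ansatz metric, and all smooth ambient tensors are polynomial in $x$.
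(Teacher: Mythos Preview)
Your approach is essentially the same as the paper's, which in fact states this lemma without proof: the paper simply observes beforehand that the model ansatz is exactly Calabi--Yau (Lemma~\ref{generalizedCalabimodelisCY}), that the tubular-neighbourhood identification introduces only $O(|\xi_1|+|\xi_2|)$ errors, and that $f_\Omega=O(|\xi_1|+|\xi_2|)$, all of which are exponentially small in $\min(x_1,x_2)$ while the metric length scales are only polynomial in the log variables. Your Steps~1--4 spell out exactly this reasoning.

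One small slip in Step~2: the identity $(dd^c u)^n=\binom{n}{2}A^2\wedge B^{n-2}$ is not exact on $\bar X$. Rank $\le 2$ for $A$ gives $A^3=0$, but $B=(d_1u_1+d_2u_2)\,dd^c\phi_0$ does \emph{not} have rank $\le n-2$ away from $Y$, since $\phi_0$ has been extended off $Y$ and $dd^c\phi_0$ can be full rank. So $(dd^c u)^n$ also contains $nA\wedge B^{n-1}+B^n$. These extra terms are harmless: they involve $(dd^c\phi_0)^{n-1}$ and $(dd^c\phi_0)^n$, which vanish identically along $Y$ (where $\phi_0$ depends only on the $(n-2)$ base variables) and are therefore $O(|\xi_1|+|\xi_2|)$, so they belong in your error $\mathcal E$ and are handled by Step~3. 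Once you include them there, the argument goes through as written.
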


\subsection{Asymptotic expansion near $D_1$}\label{AsymptoticexpansionnearD1}

We now translate the ODE asymptote at $t\to 0$ to the geometry of the region $1\ll x_2\ll x_1$. The analyticity of the ODE solution (\cf Cor. \ref{ODEanalyticity}) gives a fractional power series expansion (\ref{ODEasymptoterefined}), which converts via (\ref{wfrak}) to
\[
w(t)=(1-t)\{  w_0+ \frac{n-1}{n}w_0^{-\frac{3}{n-1} }(\frac{t}{1-t})^{\frac{n}{n-1} }  + O((\frac{t}{1-t})^{\frac{2n}{n-1} }  )\}.
\]
Now
\[
w= \frac{n+2}{n}v^{n/(n+2)}, \quad v_0= (\frac{nw_0}{n+2})^{\frac{n+2}{n} }, \quad a= (\frac{n}{n+2})^{2/n} w_0^{ - \frac{n+2}{n(n-1)}  },
\]
hence we have a fractional power series
\[
\begin{split}
v(t)= (1-t)^{ \frac{n+2}{n}  }
\{ v_0   +\frac{n-1}{n} a  (\frac{t}{1-t})^{\frac{n}{n-1}  } + \sum_{k=2}^\infty a_k( \frac{t}{1-t} )^{\frac{kn}{n-1}  }   \}.
\end{split} 
\]

Recall
\[
x_2= \frac{td_2}{d_1} x_1, \quad u= x_1^{\frac{n+2}{n} }v(t).
\]
We introduce the new variable
\[
\tilde{x}_1:= x_1-\frac{d_1}{d_2}x_2= (1-t) x_1= \frac{d_1}{d_2} \frac{1-t}{t} x_2.
\]
In terms of the defining sections $S_i\in  H^0(\bar{X}, d_iL_0)$ for $D_1, D_2$, we have
\[
\tilde{x}_1= -\log |S_1|+ \frac{d_1}{d_2} \log |S_2|= \frac{1}{d_2} \log |S_1^{\otimes d_1}/S_1^{\otimes d_2}|,
\]
where 
\[
\xi':=S_2^{\otimes d_1}/S_1^{\otimes d_2}
\]
is actually a holomorphic \emph{function}, which means its magnitude does not involve the choice of a Hermitian metric. The geometric significance of this function is that although the normal bundle $\mathcal{O}(D_1)=d_1L_0$ of $D_1\subset\bar{X}$ is nontrivial, it restricts to a line bundle on $D_1\setminus D_2$ which becomes trivial after taking finite power.

We then have an expansion for $1\ll x_2\ll x_1$,
\begin{equation}\label{upowerseries}
u= v_0 \tilde{x}_1^{ \frac{n+2}{n} } +\frac{n-1}{n} a \tilde{x}_1^{ \frac{n+2}{n} -\frac{n}{n-1} } (\frac{ d_1x_2}{d_2})^{\frac{n}{n-1}  }+ \sum_{k\geq 2}
a_k \tilde{x}_1^{ \frac{n+2}{n} -\frac{kn}{n-1} } (\frac{ d_1x_2}{d_2})^{\frac{kn}{n-1}  }
 .
\end{equation}

\subsection{More background on the Tian-Yau metric}\label{TianYaubackground}

We recall some details of the \textbf{Tian-Yau metric} (\cf \cite[section 3]{HSVZ} for more expositions).

In our setup, $D_1$ is a Fano manifold in its own right with anticanonical bundle $d_2L_0|_{D_1}$, and $Y=D_2\cap D_1$ is an anticanonical divisor defined by some section $S\in H^0(D_1, d_2L_0)$, which in our context is the restriction of $S_2\in H^0(\bar{X}, d_2L_0)$ to $D_1$. Up to a multiplicative constant $S^{-1}$ can be viewed as a holomorphic volume form $\Omega_{D_1}$ on $D_1\setminus D_2$ with a simple pole along $Y$, which we normalize to have residue $\Omega_Y$ along $Y$. In local coordinates,
\[
\Omega_{D_1}\approx d\log \xi_2 \wedge \Omega_Y,
\]
where $\xi_2$ is a local defining function of $D_2\cap D_1\subset D_1$. Recall $h_{L_0}^{\otimes d_2}$ is the Hermitian metric on $L_0|_Y$ whose curvature form is the Calabi-Yau metric on $Y$ in the class $d_2c_1(L_0)$. We extend $h_{L_0}$ to a smooth, positively curved metric on $D_1$, so that
\[
\omega_{Cal'}= \frac{n-1}{n} dd^c (-\log |S|)^{ \frac{n}{n-1} }
\]
defines a K\"ahler form on a neighbourhood of infinity in $D_1\setminus D_2$. Up to an approximately holomorphic diffeomorphism, the metric $\omega_{Cal'}$ on $(D_1\setminus D_2)$ outside a compact set, agrees with the Calabi ansatz on the total space of the line bundle $L_0\to Y$, up to exponentially small errors. The slightly unusual exponent $\frac{n}{n-1}$ is because $\dim Y=n-2$. Asymptotically, the distance to a fixed basepoint in $D_1\setminus D_2$ is uniformly equivalent to
$
r_{D_1}\sim (-\log |S|)^{\frac{n  }{2(n-1) } }.
$
We can arrange $\omega_{Cal'}$ to extend to an exact global K\"ahler metric on $D_1\setminus D_2$, agreeing with the previous formula outside a compact set, still denoted $\omega_{Cal'}$.

A technical subtlety is that $S$ is only defined up to a multiplicative constant, and correspondingly $\log |S|$ has an additive constant ambiguity, which is fixed by the \textbf{integral normalization} condition
\begin{equation}\label{integralnormalization}
\int_{D_1\setminus D_2} \left(\omega_{Cal'}^{n-1}- \frac{d_2^{n-2}\int_Y c_1(L_0)^{n-2}}{4\pi} \sqrt{-1}^{(n-1)^2} \Omega_{D_1}\wedge \overline{\Omega}_{D_1}\right) =0.
\end{equation}
This makes sense because the integrand has exponential decay near infinity, and morever Stokes theorem on large compact sets shows that the integral only depends on the asymptotic information of $\omega_{cal'}$ near infinity. The preferred normalization of $\log |S|$ resolves the additive ambiguity of $x_2=-\log |S_2|$; a completely analogous normalization on the Tian-Yau space $D_2\setminus D_1$ fixes the additive ambiguity of $x_1=-\log |S_1|$.

The main existence theorem and asymptotic information of the Tian-Yau metric is summarized as follows:

\begin{thm}\label{TianYauthm}
	\cite{TianYau}\cite[Prop. 2.9]{Heingravitational} There is a complete Ricci-flat K\"ahler metric 
	\[
	\omega_{TY}= dd^c \phi_{TY}=\omega_{Cal'}+ dd^c \phi_{TY,rel}
	\] solving the complex Monge-Amp\`ere equation
	\[
	\omega_{TY}^{n-1} = \frac{d_2^{n-2}\int_Y c_1(L_0)^{n-2}}{4\pi} \sqrt{-1}^{(n-1)^2} \Omega_{D_1}\wedge \overline{\Omega}_{D_1},
	\]
	with exponential decay estimate for some constant $c>0$ depending on $D_1, D_2$:
	\[
	|\nabla_{\omega_{Cal'}}^k \phi_{TY,rel} |_{ \omega_{Cal'} } = O( e^{-c r_{D_1}^{ \frac{n-1}{n} } } ), \quad r_{D_1}\to +\infty, \quad k\geq 0.
	\]
	
\end{thm}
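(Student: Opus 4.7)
The plan is to implement the standard Tian-Yau construction, refined by Hein's weighted analysis to extract the exponential decay. First I would set up the Calabi ansatz as a model. On the total space of the line bundle $L_0 \to Y$, the formula $\frac{n-1}{n} dd^c(-\log h_{L_0})^{n/(n-1)}$ defines an exact Calabi-Yau metric, since $Y$ is Calabi-Yau in the class $c_1(L_0)|_Y$. Using an approximately holomorphic tubular neighborhood identification between the normal bundle of $Y$ inside $D_1$ (which is $d_2L_0|_Y$, so a finite cover related to $L_0$) and a neighborhood of $Y$ in $D_1$, transfer $\omega_{Cal'}$ to an end of $D_1\setminus D_2$ and extend globally; the identification error is exponentially small in the logarithmic coordinate $x_2 = -\log|S|$, hence in $r_{D_1}^{(n-1)/n}$.

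Second, I would compute the volume form discrepancy. Writing $\omega_{Cal'}^{n-1} = (1+F)\cdot \frac{d_2^{n-2}\int_Y c_1(L_0)^{n-2}}{4\pi}\sqrt{-1}^{(n-1)^2}\Omega_{D_1}\wedge\overline\Omega_{D_1}$, the error $F$ decays exponentially in $r_{D_1}^{(n-1)/n}$, because both the difference between the Calabi ansatz volume form on the model and the actual pluripolar volume form $\Omega_{D_1}\wedge\overline\Omega_{D_1}$ on $D_1\setminus D_2$, and the tubular identification error, have exponential decay in the log variable. The integral normalization condition (\ref{integralnormalization}) then forces $\int F\,\omega_{Cal'}^{n-1} = 0$, which is the necessary solvability condition for the Laplacian on a noncompact manifold of this volume growth.

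Third, I would invoke the Tian-Yau-Hein existence package to solve the complex Monge-Amp\`ere equation $(\omega_{Cal'}+dd^c\phi_{TY,rel})^{n-1} = \text{const}\cdot\sqrt{-1}^{(n-1)^2}\Omega_{D_1}\wedge\overline\Omega_{D_1}$ by Yau's continuity method, with a priori estimates in weighted H\"older spaces adapted to the Calabi ansatz geometry on the end. The Fredholm theory for the weighted Laplacian, due to Hein, gives an initial solution $\phi_{TY,rel}$ decaying polynomially. Bootstrapping the decay rate is the core technical step: one uses a weighted maximum principle built from barriers of the form $e^{-cr_{D_1}^{(n-1)/n}}$, combined with the spectral gap of $\Delta_Y$ on the compact Calabi-Yau base (which separates variables when pulled back to the Calabi ansatz cone), to upgrade polynomial decay to the stated exponential rate. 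Higher derivatives follow from scaled Schauder estimates on balls of size comparable to the local harmonic radius of the Calabi ansatz.

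The main obstacle is the sharp exponential decay $e^{-cr_{D_1}^{(n-1)/n}}$ rather than mere polynomial decay. This rate reflects the non-Euclidean volume growth of the Tian-Yau metric and requires the matching between the spectral gap on $Y$ and the geometry of the noncompact collapsing directions. The integral normalization (\ref{integralnormalization}) is also critical — without it, the leading constant mode would obstruct solvability and only polynomial decay could be obtained.
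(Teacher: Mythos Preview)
The paper does not prove this theorem; it is quoted as a known result, with citations to Tian--Yau and to Hein's Proposition~2.9. There is therefore no in-paper proof to compare your proposal against. Your outline --- build the Calabi ansatz as an asymptotic model, show the volume-form discrepancy decays exponentially in the log variable, run the Tian--Yau--Hein existence machinery, then bootstrap to exponential decay via barrier/spectral-gap arguments --- is a faithful sketch of the approach in those references.

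One correction on the role of the integral normalization~(\ref{integralnormalization}). You describe it as ``the necessary solvability condition for the Laplacian,'' but that is not quite its function here. The Monge--Amp\`ere equation for $\omega_{TY}$ is solvable for \emph{any} normalization of $S$; what changes with the normalization is the reference metric $\omega_{Cal'}$, and hence the relative potential $\phi_{TY,rel}=\phi_{TY}-\phi_{Cal'}$. With the wrong normalization, $\phi_{TY,rel}$ picks up a term asymptotic to a constant multiple of $(-\log|S|)^{1/(n-1)}$ --- precisely the function $u_0$ discussed immediately after the theorem --- which grows rather than decays. The normalization~(\ref{integralnormalization}) is what kills this mode and makes the exponential decay statement for $\phi_{TY,rel}$ true as written. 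So it is essential for the \emph{decay} claim, but not for existence.
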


The volume growth rate of geodesic balls on the Tian-Yau space is $\text{Vol}(B(r_{D_1}))\sim r_{D_1}^{\frac{2(n-1)}{n} }$, which is \emph{less than quadratic}. As such, solving the \textbf{Poisson equation} with potential decaying at infinity would require an integral normalization on the forcing term, which is related to (\ref{integralnormalization}) as
the Poisson equation is the linearization of the complex Monge-Amp\`ere equation.

\begin{prop}
Let $f$ be a smooth function on the Tian-Yau space satisfying the integral normalization $\int f\omega_{TY}^{n-1}=0$ and the fast decay
\[
|\nabla_{TY}^k f|=O(  e^{-c r_{D_1}^{ \frac{n-1}{n} } }   ),\quad \forall k\geq 0,
\]
then there is a smooth function $u$ with $\Lap_{TY} U=f$ with fast decay
\[
|\nabla_{TY}^k U| =O(  e^{-c r_{D_1}^{ \frac{n-1}{n} } }   ), \quad \forall k\geq 0,
\]
where the decay rate $c>0$ may be shrinked.
\end{prop}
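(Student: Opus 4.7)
The strategy is Hein-style weighted Fredholm analysis on the Tian--Yau space, followed by a separation-of-variables bootstrap for exponential decay; compare \cite[Prop.~2.9]{Heingravitational}. First, I would set up weighted H\"older spaces $C^{k,\alpha}_\beta(D_1\setminus D_2,\omega_{TY})$ with polynomial weight $r_{D_1}^\beta$, following Hein's framework. Because $\omega_{TY}$ is asymptotic to the Calabi-ansatz metric $\omega_{Cal'}$ with exponentially small error (Theorem~\ref{TianYauthm}), the Laplacian $\Delta_{TY}: C^{k+2,\alpha}_\beta \to C^{k,\alpha}_{\beta-2}$ is Fredholm for $\beta$ outside a discrete set of indicial roots read off from the asymptotic radial ODE. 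At the critical weight $\beta=0$ the kernel reduces to constants because the sub-quadratic volume growth $\mathrm{Vol}(B(r))\sim r^{2(n-1)/n}$ makes the Tian--Yau space parabolic; Green's identity then identifies the corresponding cokernel with the single functional $f\mapsto \int f\,\omega_{TY}^{n-1}$. The hypothesis therefore yields a polynomially decaying solution $U \in C^{k+2,\alpha}_{-\delta}$ for some $\delta>0$.

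To upgrade polynomial to exponential decay, I would perform a mode analysis on the Calabi-ansatz end. The cross-section at radius $r$ is an $S^1$-bundle over $Y$ with $S^1$-diameter $\sim r^{-(n-2)/n}$ and $Y$-diameter $\sim r^{1/n}$, so
\[
\Delta_{TY} \,\approx\, \partial_r^2 + \tfrac{n-2}{nr}\partial_r + r^{-2/n}\Delta_Y^{(0)} + r^{2(n-2)/n}\partial_\theta^2,
\]
up to exponentially small corrections from $dd^c\phi_{TY,rel}$. Expand $f$ and $U$ into joint eigenmodes of $\partial_\theta$ (Fourier index $k\in\Z$) and $\Delta_Y^{(0)}$ (eigenvalue $\lambda_l\ge 0$). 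For $k\neq 0$ the ODE is dominated at infinity by $r^{2(n-2)/n}k^2 U_{k,l}$, which via WKB gives homogeneous solutions of super-exponential type $\exp(\pm \tfrac{nk}{2(n-1)}r^{2-2/n})$, forcing the particular solution to decay at least as fast as $f_{k,l}$. For $k=0$ and $\lambda_l>0$ the radial equation is a Schr\"odinger-type ODE whose WKB solutions behave like $\exp(\pm \sqrt{\lambda_l}\,\tfrac{n}{n-1}\, r^{(n-1)/n})$, and variation of parameters against the decaying mode produces a particular solution with exponential decay at rate $\sqrt{\lambda_l}$. Finally, for the fully constant mode $k=l=0$ the ODE reduces to the first-order integrable equation $(r^{(n-2)/n}U_{00}')'= r^{(n-2)/n}f_{00}$, which can be solved by two explicit integrations; the integral normalization on $f$ is precisely what permits the decaying choice of constants of integration, yielding $U_{00}=O(r^{2/n}e^{-c r^{(n-1)/n}})$.

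The Fredholm solution from Step~1 must agree with the mode-by-mode construction up to a harmonic function, which by parabolicity is constant, and this constant vanishes by requiring polynomial decay at infinity. Hence $U$ itself decays exponentially. Higher derivative bounds then follow by local Schauder estimates on unit-scale balls of the Tian--Yau geometry, using the conventions of \eqref{localHolderseminorm}--\eqref{localHolder} and passing to the local universal cover to unwrap the collapsing $S^1$-fibre; bootstrapping inductively from $\Delta_{TY}U=f$ with the $C^0$ exponential bound and the exponential decay of $\nabla^k f$ yields the claimed estimate for all $k\ge 0$ at a slightly reduced rate $c$, which is allowed by the statement.

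The main technical obstacle is the zero-mode analysis: one must verify that the integral condition $\int f\,\omega_{TY}^{n-1}=0$ really matches the decaying integration constant in the $k=l=0$ ODE once the contribution of the compact core is taken into account, and that the exponentially small deviation of $\Delta_{TY}$ from the model Laplacian (coming from $dd^c\phi_{TY,rel}$) is a subleading perturbation in the WKB asymptotics for the nonzero modes. Both issues are handled by shrinking $c$ below the decay rates of $f$, $\phi_{TY,rel}$, and the spectral gap $\sqrt{\lambda_1}$, since no sharp rate is required.
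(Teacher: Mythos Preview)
Your proposal is correct and follows essentially the same route as the paper, which does not give a self-contained proof but only a remark: existence of a bounded solution via \cite[Thm.~1.5]{HeinSobolev}, and exponential decay because the Tian--Yau end is $\mathrm{CYL}(\tfrac{1}{n})$ so that the argument of \cite[Prop.~2.9]{Heingravitational} applies verbatim to the Poisson equation. Your sketch is a faithful unpacking of that citation---the separation-of-variables bootstrap you describe is precisely Hein's mechanism---with the minor variation that for the existence step you invoke weighted Fredholm theory rather than Hein's Moser-iteration result; either works here.
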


\begin{rmk}
The existence of solution with $L^\infty$ bound and $L^2$-gradient bound follows from \cite[Thm 1.5]{HeinSobolev}. The exponential type decay estimate is because the Tian-Yau space is $\text{CYL}(\frac{1}{n} )$ in the sense of Hein \cite[Def. 2.5]{Heingravitational}, and the proof of \cite[Prop. 2.9]{Heingravitational} works almost verbatim for the Poisson equation.
\end{rmk}

To solve the Poisson equation without the integral normalization condition on the forcing term, we need to allow solutions growing at infinity. Consider the special function $(-\log |S|)^{ \frac{1}{n-1}  }$ defined near infinity, which we extend smoothly to a global function $u_0$ on the Tian-Yau space $D_1\setminus D_2$.

\begin{lem}
The function $f_0= \Lap u_0$ satisfies the fast decay near infinity
\[
|\nabla_{TY}^k f_0 |= O(  e^{-c r_{D_1}^{\frac{n-1}{n } } } )
\]
and the total integral
\[
\frac{1}{2\pi(n-1)}\int_{D_1\setminus D_2} f_0\omega_{TY}^{n-1}=
\int_{D_1\setminus D_2} dd^c u_0 \wedge \omega_{TY}^{n-2}= \frac{d_2^{n-2}}{n-1}\int_Y c_1(L_0)^{n-2} \neq 0.
\]
\end{lem}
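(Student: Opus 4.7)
The plan is to establish a clean algebraic identity in the Calabi ansatz model, from which both the decay bound and the integral identity follow by trace arguments and Stokes' theorem.

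\textbf{Step 1 (Model identity).} Near infinity $u_0 = \psi^{1/(n-1)}$ with $\psi = -\log|S|$. The companion formulas
\begin{equation*}
\omega_{Cal'} = \psi^{1/(n-1)} dd^c\psi + \tfrac{1}{n-1}\psi^{(2-n)/(n-1)} d\psi\wedge d^c\psi,
\end{equation*}
\begin{equation*}
dd^c u_0 = \tfrac{1}{n-1}\psi^{(2-n)/(n-1)} dd^c\psi + \tfrac{2-n}{(n-1)^2}\psi^{(3-2n)/(n-1)} d\psi\wedge d^c\psi
\end{equation*}
feed a binomial expansion of $\omega_{Cal'}^{n-2}$ in which only the $0$-th and $1$-st powers of $d\psi\wedge d^c\psi$ survive. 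Of the four cross-terms in the product $dd^c u_0\wedge \omega_{Cal'}^{n-2}$, the two off-diagonal contributions proportional to $\psi^{-1}(dd^c\psi)^{n-2}d\psi\wedge d^c\psi$ have coefficients $\pm\frac{n-2}{(n-1)^2}$ and cancel, while $(d\psi\wedge d^c\psi)^2 = 0$ kills the remaining unwanted term. The surviving diagonal term has $\psi$-exponents summing exactly to $\psi^0$, producing the clean identity
\begin{equation*}
dd^c u_0\wedge \omega_{Cal'}^{n-2} = \tfrac{1}{n-1}(dd^c\psi)^{n-1} = \tfrac{d_2^{n-1}}{n-1}(dd^c\phi_0)^{n-1},
\end{equation*}
a smooth top form on $D_1$ that extends across $Y$.

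\textbf{Step 2 (Decay).} Since $\omega_{Cal'}$ is asymptotically Ricci-flat, $\omega_{Cal'}^{n-1}$ equals a constant multiple of $\sqrt{-1}^{(n-1)^2}\Omega_{D_1}\wedge\overline{\Omega_{D_1}}$, which carries a $|\xi|^{-2}$ pole along $Y$ inherited from the log pole of $\Omega_{D_1}$. Combined with Step 1,
\begin{equation*}
\operatorname{tr}_{\omega_{Cal'}}(dd^c u_0) = \frac{(dd^c\psi)^{n-1}}{\omega_{Cal'}^{n-1}} = O(|\xi|^2) = O(e^{-2\psi}),
\end{equation*}
which is a fortiori $O(e^{-cr_{D_1}^{(n-1)/n}})$ since $\psi \asymp r_{D_1}^{2(n-1)/n}$. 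Transfer to $\omega_{TY}$ via Theorem~\ref{TianYauthm}: $\omega_{TY} - \omega_{Cal'} = dd^c\phi_{TY,rel}$ with all Calabi $C^k$-norms of $\phi_{TY,rel}$ decaying at rate $e^{-cr_{D_1}^{(n-1)/n}}$, and the polynomial-sized $dd^c u_0$ is dominated by these exponentially small corrections. Hence $f_0 = 2\pi\operatorname{tr}_{\omega_{TY}}(dd^c u_0)$ and all its derivatives satisfy the claimed decay (after possibly shrinking $c$).

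\textbf{Step 3 (Integral identity).} The first equality is the pointwise $(n-1)\,dd^c u_0\wedge \omega^{n-2} = \operatorname{tr}_\omega(dd^c u_0)\,\omega^{n-1}$ paired with $\Lap u_0 = 2\pi\operatorname{tr}_\omega(dd^c u_0)$. For the second, apply Stokes on $M_\epsilon = \{|S|\geq \epsilon\}\subset D_1\setminus D_2$: since $\omega_{TY}^{n-2}$ is closed,
\begin{equation*}
\int_{M_\epsilon} dd^c u_0 \wedge \omega_{TY}^{n-2} = \int_{\partial M_\epsilon} d^c u_0 \wedge \omega_{TY}^{n-2}.
\end{equation*}
Substituting $d^c u_0 = \tfrac{1}{n-1}\psi^{(2-n)/(n-1)} d^c\psi$, replacing $\omega_{TY}^{n-2}$ by $\omega_{Cal'}^{n-2}$ up to exponentially small corrections, and retaining only the leading $\psi^{(n-2)/(n-1)}(dd^c\psi)^{n-2}$ piece of $\omega_{Cal'}^{n-2}$, the $\psi$-powers cancel exactly. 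The boundary integral factorizes into $\tfrac{1}{n-1}\bigl(\int_{S^1}d^c\psi\bigr)\bigl(\int_Y d_2^{n-2}(dd^c\phi_0)^{n-2}|_Y\bigr)$. With the orientation of $\partial M_\epsilon$ induced by the outward normal to $M_\epsilon$, the fiber factor evaluates to $+1$ (from $\int_{S^1} d^c\log|\xi|=1$ combined with the inner-boundary orientation), and the base factor equals $d_2^{n-2}\int_Y c_1(L_0)^{n-2}$. Taking $\epsilon\to 0$ yields the claimed value, strictly positive by the positivity of $L_0$.

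\textbf{Main obstacle.} The delicate part is controlling the subleading pieces of the boundary integral as $\epsilon\to 0$: the $\chi = d_2\phi_0$ correction to $\psi = -\log|\xi|+\chi$, the exponentially small $\omega_{TY}-\omega_{Cal'}$ wedged against the only-polynomially-small $dd^c u_0$, and the non-leading terms in the binomial expansion. These vanish in the limit because every surviving power of $\psi$ beyond $\psi^0$ carries a strictly negative exponent while the boundary volume is only polynomial in $\psi$. A conceptually cleaner alternative (at the cost of more algebra) is to observe that by Step 1 the form $dd^c u_0\wedge\omega_{TY}^{n-2}$ extends smoothly across $Y$ modulo exponentially small corrections, so the integral is intrinsically topological and can be evaluated by direct deformation to $\omega_{Cal'}$.
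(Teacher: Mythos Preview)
Your proof is correct and follows essentially the same route as the paper: both establish decay via the harmonicity of $u_0$ with respect to $\omega_{Cal'}$, and both compute the integral by Stokes' theorem on compact exhaustions $\{-\log|S|\leq R\}$ (equivalently your $M_\epsilon$), reducing to a boundary term near $Y$ where the powers of $\psi$ cancel. Your Step~1 identity $dd^c u_0\wedge\omega_{Cal'}^{n-2}=\tfrac{1}{n-1}(dd^c\psi)^{n-1}$ is a slightly sharper and more explicit packaging than the paper's one-line invocation of model harmonicity: on the line-bundle model it gives $\Delta_{\omega_{Cal'}}u_0=0$ for rank reasons, while on the actual $D_1$ it yields directly the super-exponential rate $O(|\xi|^2)$ for the $\omega_{Cal'}$-trace (since $(dd^c\psi)^{n-1}$ extends smoothly across $Y$ while $\omega_{Cal'}^{n-1}$ carries the $|\xi|^{-2}$ pole), so that only the $\omega_{TY}-\omega_{Cal'}$ error governs the final decay rate.
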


\begin{proof}
The exponential type decay is because the deviation between the Tian-Yau metric and the Calabi ansatz is exponentially small, and on the Calabi ansatz model $(-\log |S|)^{ \frac{1}{n-1} }$ is precisely harmonic. This is the intimately connected to the freedom to add a constant to $-\log |S|$ in the Calabi ansatz, without affecting the complex Monge-Amp\`ere measure.

To evaluate $\int f_0\omega_{TY}^{n-1}$, we take a very large compact set $K= \{   \log |S|\leq R\}$, and consider the $R\to \infty$ limit. We have up to exponentially suppressed errors
\[
\int_K dd^c u_0 \wedge \omega_{TY}^{n-2}= \int_{\partial K} d^c u_0 \wedge \omega_{TY}^{n-2} \approx \int_{\partial K} d^c u_0 \wedge \omega_{Cal'}^{n-2}.
\]
Computing in the Calabi ansatz model, this is 
\[
-\frac{1}{n-1} \int_{\partial K} d^c \log |S|\wedge (dd^c (-\log |S|))^{n-2}
\]
which is
\[
\frac{1}{n-1} \int_Y  (dd^c (-\log |S|))^{n-2}= \frac{1}{n-1}\int_Y (d_2c_1(L_0))^{n-2}= \frac{d_2^{n-2}}{n-1}\int_Y c_1(L_0)^{n-2}.
\]
Taking the $R\to +\infty$ gives the total integral.
\end{proof}

\begin{cor}
Let $f$ be a smooth function on the Tian-Yau space satisfying the fast decay
\[
|\nabla_{TY}^k f|=O(  e^{-c r_{D_1}^{ \frac{n-1}{n} } }   ),\quad \forall k\geq 0,
\]
then there is a smooth function $U$ with $\Lap_{TY} U=f$ in the form 
\[
U=\text{const}\cdot u_0+\tilde{U},\quad 
|\nabla_{TY}^k U| =O(  e^{-c r_{D_1}^{ \frac{n-1}{n} } }   ), \quad \forall k\geq 0,
\]
for some possibly shrinked $c>0$.
\end{cor}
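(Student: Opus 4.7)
The plan is to reduce to the previous Proposition by subtracting off a suitable multiple of $f_0 = \Lap u_0$, exploiting the fact that $\int f_0 \, \omega_{TY}^{n-1} \neq 0$ established in the preceding lemma.

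First I would check that the integral $I := \int_{D_1\setminus D_2} f \, \omega_{TY}^{n-1}$ converges. This is immediate from the exponential decay hypothesis on $f$ together with the fact that the Tian-Yau space has volume growth $\operatorname{Vol}(B(r_{D_1})) \sim r_{D_1}^{2(n-1)/n}$, which is dominated at infinity by $e^{c r_{D_1}^{(n-1)/n}}$ for any $c>0$. Next, define the constant
\[
c_0 := \frac{I}{\int_{D_1\setminus D_2} f_0 \, \omega_{TY}^{n-1}},
\]
which makes sense since the denominator equals $\frac{2\pi d_2^{n-2}}{\,} \int_Y c_1(L_0)^{n-2} \neq 0$ by the preceding lemma.

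Set $\tilde f := f - c_0 f_0$. By construction, $\tilde f$ is smooth, satisfies $\int \tilde f \, \omega_{TY}^{n-1} = 0$, and inherits the fast decay
\[
|\nabla_{TY}^k \tilde f | = O\!\left(e^{-c\, r_{D_1}^{(n-1)/n}}\right), \qquad k \geq 0,
\]
from the analogous estimates on $f$ and on $f_0$ (the constant $c>0$ may be shrunk to the minimum of the two). The previous Proposition therefore applies to $\tilde f$ and produces a smooth function $\tilde U$ with $\Lap_{TY} \tilde U = \tilde f$ satisfying the same type of exponential decay, possibly with a further shrinkage of $c$. Setting $U := c_0 u_0 + \tilde U$ gives $\Lap_{TY} U = c_0 f_0 + \tilde f = f$, in the desired form.

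This approach has no real obstacle once the two preceding results are in hand: the lemma guarantees that $f_0$ provides a nonzero linear functional on the one-dimensional obstruction space (represented by the integral normalization), and the proposition inverts the Laplacian on its kernel. The only mild subtlety is tracking that $c$ remains positive after two possible reductions (first matching the decay rates of $f$ and $f_0$, then applying the proposition), but this is automatic since all decay exponents are strictly positive and only finitely many are invoked.
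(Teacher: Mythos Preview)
Your proposal is correct and is precisely the argument the paper has in mind: the Corollary is stated without proof because it follows immediately from the preceding Proposition (inverting $\Delta_{TY}$ on mean-zero data) and Lemma (producing $f_0=\Delta_{TY}u_0$ with nonzero integral), by subtracting the appropriate multiple of $f_0$ to restore the integral normalization. One cosmetic remark: the decay estimate in the conclusion should of course be read as a bound on $\tilde{U}$ rather than on $U$ itself (since $u_0$ grows), which is exactly what your argument establishes.
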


\subsection{Non-generic region near infinity}\label{Nongenericregiongluingansatz}

Next, we need to move up one dimension, and produce some ansatz metric on $X=\bar{X}\setminus D_1\cup D_2$ in the region near infinity close to $D_1\setminus D_2$, corresponding in the logarithmic coordinate to $x_1\gg |x_2|+1$, including in particular the region with $x_2=O(1), x_1\gg 1$. Pick an auxiliary smooth Hermitian metric on $\bar{X}$, such that the normal vector field to $D_1\subset \bar{X}$ is tangent to $D_2$ along $Y=D_1\cap D_2$. Then normal geodesic flow identifies a tubular neighbourhood of $D_1$ with the normal bundle of $D_1$, and the error caused by the failure of holomorphicity is exponentially suppressed in the log coordinates. We can thus regard the potential $\phi_{TY}$ of the Tian-Yau metric on $D_1\setminus D_2$ as a function on its tubular neighbourhood, via pullback.

To match with the asymptote (\ref{upowerseries}) in the region $1\ll x_2\ll x_1$,
 we are motivated to consider the  \textbf{local ansatz potential} near $D_1\setminus D_2$
\begin{equation}
\phi_{D_1}= v_0 \tilde{x}_1^{\frac{n+2}{n} }   +   a (\frac{d_1}{d_2})^{\frac{n}{n-1}}  \tilde{x}_1^{ \frac{n-2}{n(n-1)}  } \phi_{TY}+\sum_{k\geq 2} a_k \tilde{x}_1^{ \frac{n+2}{n} -\frac{kn}{n-1} } (\frac{ d_1x_2}{d_2})^{\frac{kn}{n-1}  } .
\end{equation}
Here we make a fixed choice of a positive valued smooth function on the Tian-Yau space matching with $x_2$ outside a compact set, still denoted as $x_2$, so that when $x_2=O(1)$ the expression $\phi_{D_1}$ remains smooth. The convergence of the series is valid for $x_2\ll x_1$. Of course, the ad hoc choice means that the ansatz needs to be corrected later by more refined terms.

The local geometry should be imagined as a fibration of Tian-Yau metrics with slowly changing size depending on the logarithmic variable $x_1$. Consider $\tilde{x}_1$ around a given large value $x_1'\gg 1$. The dominant terms in $dd^c \phi_{D_1}$ are 
\begin{equation}\label{productlocalmetric}
\omega_{x_1'}=a (\frac{d_1}{d_2})^{n/(n-1)} x_1'^{ \frac{n-2}{n(n-1)}  }  dd^c \phi_{TY}+ \frac{2(n+2)v_0}{n^2d_2^2} x_1'^{-\frac{n-2}{n} } \frac{\sqrt{-1}}{4\pi} d\log \xi'\wedge d\overline{\log \xi'}.
\end{equation}
Up to taking a finite cover (to do with taking fractional powers of $\xi'$), this model metric describes the product of the Tian-Yau metric with length scale $O(\rho)  $ corresponding to the harmonic radius scale, and a cylinder of circle length scale $O(x_1'^{\frac{2-n}{2n} })$ corresponding to the injectivity scale.

We can now set up the \textbf{local H\"older norms} in the nongeneric region $x_2\ll x_1$ (The case of $x_1\ll x_2$ is completely similar).
When $x_2=O(1), x_1\gg 1$, we unwind the $S^1$ factor of the cylinder to pass to the local universal cover.    On $O(\rho)$ neighbourhoods,  we can use the local product metric (\ref{productlocalmetric}) to define the local H\"older seminorm and the $C^{k,\alpha}$-norms, as in (\ref{localHolderseminorm})(\ref{localHolder}).

The case of $1\ll x_2\ll x_1$ is already covered by section \ref{Genericregiongluingansatz}. If we had used the local product metric (\ref{productlocalmetric}) as reference metric, it would lead to an equivalent definition of local H\"older norms up to uniform equivalence.  Notice that within $O(\rho)$ neighbourhoods the values of $x_1, x_2$ do not vary drastically, so expressions like $O(x_1^\alpha x_2^\beta)$ are not sensitive to the choice of points in the $O(\rho)$ neighbourhood, except when $x_2=O(1)$, in which case we use a slightly abusive convention that $O(x_2^\beta)$ stands for $O(1)$ in this region.

We start by observing the derivative bounds of basic functions, which can be read off using the leading order metric ansatz, remembering that the Tian-Yau metric is well approximated by the Calabi ansatz except when $x_2=O(1)$, and that the $x_1'$-dependence in $\omega_{x_1'}$ introduces scaling factors.

\begin{lem}\label{basicderivative}
	In the region $\{  x_2\leq x_1, \text{and } x_1\gg 1  \}$,
	\[
	\norm{dx_1}_{k,\alpha,loc}= O ( x_1^{ \frac{n-2}{2n}  } ),\quad   \norm{dx_2}_{k,\alpha,loc}= O( x_1^{ -\frac{n-2}{2n(n-1)}  } x_2^{ \frac{n-2}{2(n-1)}  }  ),
	\]
	\[
	dd^c (x_1-\frac{d_1}{d_2}x_2 ) =0,\quad \norm{dd^c x_2}_{k,\alpha,loc}=O( x_2^{ -\frac{1}{n-1} } x_1^{ -\frac{n-2}{n(n-1)} }  ).
	\]
\end{lem}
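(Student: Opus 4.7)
The bounds reduce to dimensional analysis with respect to the reference metric $\omega_{x_1'}$ of (\ref{productlocalmetric}), which is a warped product: the Tian-Yau factor on $D_1\setminus D_2$ is rescaled by $x_1'^{(n-2)/(n(n-1))}$, and the flat cylinder in the $d\log\xi'$ direction is normalized so that $|d\log\xi'|^2_{\omega_{x_1'}} \sim x_1'^{(n-2)/n}$.

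The identity $dd^c(x_1-(d_1/d_2)x_2)=0$ is immediate: on $X=\bar X\setminus(D_1\cup D_2)$ one has $d_2\tilde x_1 = \log|\xi'|$, where $\xi' = S_2^{\otimes d_1}/S_1^{\otimes d_2}$ is a holomorphic function (see section \ref{AsymptoticexpansionnearD1}), and $dd^c\log|\xi'|$ vanishes pointwise. Writing $dx_1 = d\tilde x_1 + (d_1/d_2)\,dx_2$, the first summand is a constant multiple of $d\log|\xi'|$, purely in the cylinder direction, hence of $\omega_{x_1'}$-norm $\sim x_1'^{(n-2)/(2n)}$; this dominates the $dx_2$ contribution by the bound below, and yields the estimate on $\|dx_1\|$.

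Since $x_2$ is pulled back from the Tian-Yau base (smoothly extended to the region where $x_2 = O(1)$), the bounds for $dx_2$ and $dd^c x_2$ follow from the corresponding Tian-Yau-intrinsic estimates multiplied by the rescaling of the Tian-Yau factor. Intrinsically: the asymptote $r_{D_1} \sim x_2^{n/(2(n-1))}$ gives $|dx_2|_{\omega_{TY}} = O(x_2^{(n-2)/(2(n-1))})$; and the Calabi ansatz $\omega_{Cal'} = \frac{n-1}{n} dd^c x_2^{n/(n-1)}$ implies that the $Y$-fibre restriction of $dd^c x_2$ is a fixed multiple of the Calabi-Yau form $\omega_Y$, while the $Y$-fibre part of $\omega_{Cal'}$ scales as $x_2^{1/(n-1)}\omega_Y$; together with the exponential decay from Theorem \ref{TianYauthm}, one obtains $|dd^c x_2|_{\omega_{TY}} = O(x_2^{-1/(n-1)})$. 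Multiplying by the Tian-Yau rescaling factor, namely $x_1'^{-(n-2)/(2n(n-1))}$ for $1$-forms and $x_1'^{-(n-2)/(n(n-1))}$ for $2$-forms, produces the stated bounds, with the convention $O(x_2^{\beta}) = O(1)$ in the transition region $x_2 = O(1)$.

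The higher local $C^{k,\alpha}$ norms are obtained identically via covariant derivatives of $\omega_{x_1'}$ on the local universal cover: on $O(\rho)$-balls the quantity $x_1'$ varies only by bounded factors, the cylinder direction is flat so higher derivatives of $\tilde x_1$ contribute at the same scaling, and the exponential decay of all derivatives of $\phi_{TY,rel}$ from Theorem \ref{TianYauthm} allows one to differentiate the Calabi-ansatz expressions for $dx_2$ and $dd^c x_2$ freely while preserving the leading orders. The only real obstacle is bookkeeping the $x_1'$-scaling factors correctly between the Tian-Yau and cylinder factors, but this works out cleanly by the definition of $\omega_{x_1'}$.
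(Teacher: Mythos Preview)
Your argument is correct and follows the paper's approach exactly: the paper does not give a standalone proof but simply remarks that these bounds ``can be read off using the leading order metric ansatz, remembering that the Tian-Yau metric is well approximated by the Calabi ansatz except when $x_2=O(1)$, and that the $x_1'$-dependence in $\omega_{x_1'}$ introduces scaling factors.'' Your proposal is precisely a fleshed-out version of this sentence, with the scaling bookkeeping made explicit.
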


\begin{lem}
For $x_2\ll x_1$, the Tian-Yau potential satisfies the bound
\[
|\phi_{TY}|=O( x_2^{ \frac{n}{(n-1)}     } ), \quad  |d\phi_{TY}|=O( x_1^{ -\frac{n-2}{2n(n-1)}  } x_2^{ \frac{n}{2(n-1)} } ),  
\]
\[
\norm{dd^c\phi_{TY}}_{k,\alpha,loc}= O( x_1^{ - \frac{n-2}{n(n-1)} } ).
\]
\end{lem}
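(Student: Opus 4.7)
The plan is to reduce everything to the Calabi ansatz model on $D_1\setminus D_2$ and then track how distances and dual metrics scale when we pass from the intrinsic Tian-Yau geometry to the reference product metric $\omega_{x_1'}$ used to define $\norm{\cdot}_{k,\alpha,loc}$. Write $\phi_{TY}=\phi_{Cal'}+\phi_{TY,rel}$ with $\phi_{Cal'}=\frac{n-1}{n}(-\log|S|)^{n/(n-1)}$. By Theorem~\ref{TianYauthm}, $\phi_{TY,rel}$ and all of its Tian-Yau covariant derivatives decay like $\exp(-c\,r_{D_1}^{(n-1)/n})=\exp(-c\,x_2^{1/2})$ on the region $x_2\gg1$, which is much smaller than any power of $x_2$ and may be absorbed into the stated error bounds. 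On the compact set $x_2=O(1)$ the Tian-Yau metric is smooth and $\phi_{TY}$ is simply a smooth function, so all three claimed bounds hold trivially with the convention that $O(x_2^{\beta})$ means $O(1)$ there. Thus it suffices to analyze $\phi_{Cal'}$ in the range $x_2\gg1$.

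For the size of $\phi_{TY}$, the formula for $\phi_{Cal'}$ gives $|\phi_{Cal'}|=O(x_2^{n/(n-1)})$ directly. For $d\phi_{TY}$ and $dd^c\phi_{TY}$ one first computes in the \emph{intrinsic} Tian-Yau metric, where a standard computation using the Calabi ansatz shows that the geodesic distance from a basepoint is uniformly equivalent to $r_{D_1}\sim x_2^{n/(2(n-1))}$ and $|d\phi_{Cal'}|_{\omega_{TY}}\sim r_{D_1}\sim x_2^{n/(2(n-1))}$, while $dd^c\phi_{TY}=\omega_{TY}$ has pointwise norm $\sim 1$ in its own metric.

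To pass from $\omega_{TY}$-norms to norms in the local reference metric $\omega_{x_1'}$ of (\ref{productlocalmetric}), observe that on the Tian-Yau factor $\omega_{x_1'}$ equals $c(x_1')\,\omega_{TY}$ with $c(x_1'):=a(d_1/d_2)^{n/(n-1)}x_1'^{(n-2)/(n(n-1))}$, and the $(\log\xi')$-cylinder factor is orthogonal. Hence the dual metric on $T^*$ rescales by $c(x_1')^{-1}$, giving $|d\phi_{TY}|_{\omega_{x_1'}}=c(x_1')^{-1/2}|d\phi_{TY}|_{\omega_{TY}}=O(x_1^{-(n-2)/(2n(n-1))}\,x_2^{n/(2(n-1))})$, and $|dd^c\phi_{TY}|_{\omega_{x_1'}}=c(x_1')^{-1}=O(x_1^{-(n-2)/(n(n-1))})$, which are exactly the claimed $C^0$ bounds.

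For the higher order $C^{k,\alpha}_{loc}$ bounds on $dd^c\phi_{TY}$, use that $dd^c\phi_{TY}$ is pulled back from $D_1\setminus D_2$ and that the Tian-Yau metric has bounded geometry in its own rescaled charts of size $\sim r_{D_1}^{1/n}$. Rescaling these charts by $c(x_1')^{1/2}$ produces charts of the local product metric $\omega_{x_1'}$ of size equivalent to the weighting radius $\rho$ in the regime $x_2\ll x_1$; inside such charts $dd^c\phi_{TY}$ becomes $c(x_1')^{-1}$ times a smooth $(1,1)$-form with bounded $C^{k,\alpha}$ norm, uniformly in $x_1',x_2$. Since $dd^c\phi_{TY}$ has no component involving the $\log\xi'$ direction of the cylinder factor, the extra covariant derivative terms arising from using the Levi-Civita of the \emph{reference} metric rather than of $\omega_{TY}$ are harmless. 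The main mild technicality is handling the transition between the region $x_2=O(1)$, where one uses smoothness and compactness of the Tian-Yau metric on a fixed compact set, and $x_2\gg 1$, where one uses the Calabi asymptotics and the rescaled bounded-geometry argument; the exponential decay of $\phi_{TY,rel}$ from Theorem~\ref{TianYauthm} glues these regimes together.
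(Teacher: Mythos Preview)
Your proof is correct and follows exactly the approach the paper intends. In the paper this lemma is stated without a separate proof; the only justification given is the sentence preceding Lemma~\ref{basicderivative}, namely that these bounds ``can be read off using the leading order metric ansatz, remembering that the Tian-Yau metric is well approximated by the Calabi ansatz except when $x_2=O(1)$, and that the $x_1'$-dependence in $\omega_{x_1'}$ introduces scaling factors.'' Your argument is precisely a careful unpacking of that sentence: reduce to $\phi_{Cal'}$ via the exponential decay of $\phi_{TY,rel}$, compute in the intrinsic Tian-Yau metric, and then rescale by $c(x_1')=a(d_1/d_2)^{n/(n-1)}x_1'^{(n-2)/(n(n-1))}$ to pass to the reference metric $\omega_{x_1'}$ of (\ref{productlocalmetric}).
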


The following lemma quantifies the approximation of the local ansatz potential $\phi_{D_1}$ by the model product metric:

\begin{lem}\label{Metricdeviation1}
(Metric deviation) 
For $x_1'\gg 1$, then in the region with
$|x_1-x_1'|\leq x_1'^{\frac{n-2}{2(n-1)}  }$ and $|x_2|\ll x_1'$, the metric deviation between $dd^c \phi_{D_1}$ and the local product metric (\ref{productlocalmetric}) satisfies the local $C^{k,\alpha}$ estimate on $O(\rho)$ neighbourhoods around a given point:
\[
\norm{ dd^c \phi_{D_1} - \omega_{x_1'} }_{k,\alpha,loc}= O( (\frac{x_2}{x_1} )^{ \frac{n}{2(n-1) }  }   )
\]

\end{lem}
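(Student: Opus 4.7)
The plan is to decompose $\phi_{D_1}$ into the three summands
\[
\phi_{D_1} = v_0\tilde{x}_1^{(n+2)/n} + a\left(\tfrac{d_1}{d_2}\right)^{\tfrac{n}{n-1}} \tilde{x}_1^{\tfrac{n-2}{n(n-1)}}\phi_{TY} + \sum_{k\geq 2}a_k \tilde{x}_1^{\gamma_k}\left(\tfrac{d_1x_2}{d_2}\right)^{\delta_k},
\]
compute $dd^c$ of each using the crucial observation that $dd^c\tilde{x}_1 = 0$ (since $\tilde{x}_1 = (1/d_2)\log|\xi'|$ with $\xi'$ a holomorphic function away from $D_1\cup D_2$), and then compare the leading parts to the two summands of $\omega_{x_1'}$. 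Concretely, a direct calculation gives
\[
dd^c(v_0 \tilde{x}_1^{(n+2)/n}) = \tfrac{2(n+2)v_0}{n^2 d_2^2}\tilde{x}_1^{-(n-2)/n}\tfrac{\sqrt{-1}}{4\pi}d\log\xi'\wedge d\overline{\log\xi'},
\]
matching the cylinder summand of $\omega_{x_1'}$ up to the replacement $\tilde{x}_1\mapsto x_1'$; expanding $dd^c(\tilde{x}_1^\alpha\phi_{TY})$ with $\alpha = (n-2)/(n(n-1))$ produces the leading piece $\tilde{x}_1^\alpha dd^c\phi_{TY}$, matching the Tian--Yau summand of $\omega_{x_1'}$, plus cross terms and a Hessian-in-$\tilde{x}_1$ term.

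The bulk of the deviation then reduces to estimating $|\tilde{x}_1^{\beta} - x_1'^{\beta}|$ for $\beta\in\{-(n-2)/n,\,(n-2)/(n(n-1))\}$. Taylor expansion yields a relative error $\lesssim |\tilde{x}_1-x_1'|/x_1'$, and the hypotheses $|x_1-x_1'|\leq x_1'^{(n-2)/(2(n-1))}$ and $|\tilde{x}_1-x_1'|\leq |x_1-x_1'| + (d_1/d_2)|x_2|$ bound this by $x_1'^{-n/(2(n-1))} + x_2/x_1'$. Since $n/(2(n-1))\leq 1$ for $n\geq 3$ we have $x_2/x_1'\lesssim (x_2/x_1)^{n/(2(n-1))}$; and under the paper's convention that $O(x_2^\beta) = O(1)$ when $x_2=O(1)$, the term $x_1'^{-n/(2(n-1))}$ is also $O((x_2/x_1)^{n/(2(n-1))})$. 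This handles the two leading contributions.

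The remaining pieces are the cross and Hessian-in-$\tilde{x}_1$ terms in (ii), and the entire $k\geq 2$ series. Using Lemma~\ref{basicderivative} and the asymptotics $|\phi_{TY}|\sim x_2^{n/(n-1)}$, $|d\phi_{TY}|_{\omega_{x_1'}}\sim x_1'^{-(n-2)/(2n(n-1))}x_2^{n/(2(n-1))}$, the saturating contribution comes from the single cross term
\[
\alpha\,\tilde{x}_1^{\alpha-1}\,d\tilde{x}_1\wedge d^c\phi_{TY},
\]
whose $\omega_{x_1'}$-norm works out by a direct exponent computation to exactly $(x_2/x_1')^{n/(2(n-1))}$; the Hessian-in-$\tilde{x}_1$ term $\tilde{x}_1^{\alpha-2}\phi_{TY}\,d\tilde{x}_1\wedge d^c\tilde{x}_1$ and each of the $k\geq 2$ series terms are genuinely smaller, of order $(x_2/x_1)^{n/(n-1)}$, as one can verify by the same arithmetic with $\gamma_k-(n-2)/(n(n-1))$ and $\delta_k - n/(n-1)$.

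Passing from pointwise to the local $C^{k,\alpha}$ bound is routine: each covariant derivative with respect to $\omega_{x_1'}$ on the local universal cover introduces a factor of order $\rho^{-1}$, which is absorbed into the weight $\rho^j$ in (\ref{localHolder}); none of the building blocks $\tilde{x}_1^\beta$, $\phi_{TY}$, $x_2^\delta$ has singular higher derivatives in these scales, so the same exponent arithmetic propagates. The main obstacle is simply careful bookkeeping across the two incompatible length scales in $\omega_{x_1'}$ (the Tian--Yau scale $x_1'^{(n-2)/(n(n-1))/2}$ versus the cylinder scale $x_1'^{-(n-2)/(2n)}$) and verifying that the saturating $O((x_2/x_1)^{n/(2(n-1))})$ behaviour indeed comes only from the identified cross term rather than from any Taylor-expansion error or higher-series contribution.
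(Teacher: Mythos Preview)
Your proposal is correct and follows essentially the same approach as the paper: the same Leibniz expansion of $dd^c\phi_{D_1}$ using $dd^c\tilde{x}_1=0$, the same Taylor estimate $|\tilde{x}_1^\beta - x_1'^\beta|\lesssim |\tilde{x}_1-x_1'|/x_1'$ absorbed via $|x_1-x_1'|\leq x_1'^{(n-2)/(2(n-1))}$ and $x_2/x_1\leq (x_2/x_1)^{n/(2(n-1))}$, and the same identification of the cross term $\tilde{x}_1^{\alpha-1}d\tilde{x}_1\wedge d^c\phi_{TY}$ as the saturating $O((x_2/x_1)^{n/(2(n-1))})$ contribution, with the Hessian-in-$\tilde{x}_1$ and $k\geq 2$ terms controlled at the sharper order $O((x_2/x_1)^{n/(n-1)})$.
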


\begin{proof}
We will ignore all exponentially small errors coming from identifying tubular neighbourhoods with normal bundles. We compute $dd^c\phi_{D_1}$ by the Leibniz rule:
\[
dd^c \tilde{x}_1^{ \frac{n+2}{n}  }= \frac{2(n+2)}{n^2}\tilde{x}_1^{ \frac{n+2}{n}-2  } \frac{ \sqrt{-1}}{ 4\pi d_2^2 }d\log \xi'\wedge d\overline{\log \xi'}, 
\]
\[
\begin{split}
& dd^c (\tilde{x}_1^{ \frac{n-2}{n(n-1)}  }\phi_{TY}) 
=\frac{n-2}{n(n-1)} (  \frac{n-2}{n(n-1)}-1) \tilde{x}_1^{ \frac{n-2}{n(n-1)}-2 } \phi_{TY} \frac{ \sqrt{-1}}{ 4\pi d_2^2 }d\log \xi'\wedge d\overline{\log \xi'} 
\\
&+ \frac{n-2}{n(n-1)}\tilde{x}_1^{ \frac{n-2}{n(n-1)}-1 } ( d\tilde{x}_1\wedge d^c\phi_{TY} + d\phi_{TY}\wedge d^c \tilde{x}_1) + \tilde{x}_1^{ \frac{n-2}{n(n-1)}  }dd^c\phi_{TY}.
\end{split}
\]
and similarly with $dd^c (\tilde{x}_1^{  \frac{n+2}{n}- \frac{kn}{n-1} }x_2^{ \frac{kn}{n-1}  }) $.

Comparing $v_0dd^c\tilde{x}_1 ^{\frac{n+2}{n} } $ and the term $\frac{2(n+2)v_0}{n^2d_2^2} x_1'^{-\frac{n-2}{n} } \frac{\sqrt{-1}}{4\pi} d\log \xi'\wedge d\overline{\log \xi'}$, the deviation is of order $O( \frac{|x_2|+ |x_1-x_1'|}{x_1'}   )$. Notice for $n\geq 3$, we have $\frac{n}{2(n-1)}<1$ and so $\frac{|x_2|}{x_{1}'} \leq \left(\frac{x_2}{x_1}\right)^{\frac{n}{2(n-1)}}$. For $|x_1-x_1'|\leq x_1'^{\frac{n-2}{2(n-1)}  }$, the error $O( \frac{|x_1-x_1'|}{x_1'}  )=O( x_1^{-\frac{n}{2(n-1)}  }  )$, which is absorbed into $O( (\frac{x_2}{x_1})^{\frac{n}{2(n-1)}   }  )$.

Using the lemmas, we can estimate the terms appearing in $dd^c (\tilde{x}_1^{ \frac{n-2}{n(n-1)}  }\phi_{TY}) $ in the local $C^{k,\alpha}$-norms:
\[
\norm{ \phi_{TY}x_1^{ \frac{n-2}{n(n-1)}-2   } d\log \xi'\wedge d\overline{\log \xi'} }_{k,\alpha,loc}= O(x_2^{ \frac{n}{(n-1)}  }x_1^{ \frac{n-2}{n(n-1)}-2   } x_1^{\frac{n-2}{n}  } )= O((\frac{x_2}{x_1}  )^{ \frac{n}{n-1}  }).
\]
The cross terms have order
\[
\norm{
x_1^{ \frac{n-2}{n(n-1)}-1  } d\log \xi'\wedge d^c\phi_{TY} }_{k,\alpha,loc}=O( x_1^{  \frac{n-2}{n(n-1)}-1  }  x_1^{\frac{n-2}{2n}  }   x_1^{ -\frac{n-2}{2n(n-1)}  } x_2^{ \frac{n}{2(n-1)} } ) = O( ( \frac{x_2}{x_1} )^{ \frac{n}{2(n-1) } } ).
\]
So does its complex conjugate. The last error comes from the deviation between $\tilde{x}_1^{ \frac{n-2}{n(n-1)}  }dd^c \phi_{TY}$ from  $x_1'^{ \frac{n-2}{n(n-1)} }dd^c\phi_{TY}$. This error is again of order $O( \frac{|x_2|+ |x_1-x_1'|}{x_1'}   )$.

The remainder terms have leading order contribution
$
dd^c (  \tilde{x}_1^{ \frac{n+2}{n}- \frac{2n}{n-1}  }  x_2^{\frac{2n}{n-1}  }   ).
$
Its main contribution is $\tilde{x}_1^{ \frac{n+2}{n}- \frac{2n}{n-1}  } dd^c  x_2^{\frac{2n}{n-1}  } $, which has magnitude $O( (\frac{x_2}{x_1})^{ \frac{n}{n-1}  }  )$.
Combining all the errors, the deviation between $dd^c \phi_{D_1}$ and the local product metric (\ref{productlocalmetric}) has magnitude bounded by
$
 O( (\frac{x_2}{x_1} )^{ \frac{n}{2(n-1) }  }  ).
$
\end{proof}

\begin{lem}\label{VolumeerrorphiD1}
(Volume form error) In the region with $x_1\gg 1$ and $x_2\ll x_1$,
\[
\norm{(dd^c\phi_{D_1})^n -K_0\sqrt{-1}^{n^2}\Omega\wedge \overline{\Omega} }_{k,\alpha,loc} =O( x_1^{- \frac{n}{(n-1)} } e^{-c x_2^{1/2}}   ).
\]
\end{lem}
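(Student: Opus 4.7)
The plan is to write $\phi_{D_1} = u + \psi$ in the generic overlap region $x_2 \gg 1$, where $u$ is the generalised Calabi ansatz from Section \ref{Genericregiongluingansatz} and $\psi := a(d_1/d_2)^{n/(n-1)}\tilde x_1^{(n-2)/(n(n-1))}\phi_{TY,rel}$ with $\phi_{TY,rel} := \phi_{TY}-\phi_{Cal'}$, and to argue directly via the local product model in the non-generic region $x_2 = O(1)$. By Theorem \ref{TianYauthm}, $\phi_{TY,rel}$ and all its derivatives decay like $O(e^{-cx_2^{1/2}})$, and by Lemma \ref{volumeformerror1}, $(dd^cu)^n = K_0(1+Err_1)\sqrt{-1}^{n^2}\Omega\wedge\overline\Omega$ with $Err_1 = O(e^{-c\min(x_1,x_2)})$; the dominant part of $Err_1$ comes from $\omega_{Cal'}^{n-1}$ not being exactly the model Calabi--Yau volume on the Tian--Yau fibre.

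The central computation is the linearisation $(dd^c(u+\psi))^n - (dd^cu)^n = n\,dd^c\psi\wedge(dd^cu)^{n-1} + O(|dd^c\psi|^2)$, analysed via the bigraded decomposition of $dd^c\psi$ into Tian--Yau fibre, log-log, and mixed parts. The Tian--Yau fibre component is $A\,dd^c\phi_{TY,rel}$, and the linearised Tian--Yau equation $\omega_{TY}^{n-1} - \omega_{Cal'}^{n-1} = \text{tr}_{\omega_{Cal'}}(dd^c\phi_{TY,rel})\,\omega_{Cal'}^{n-1} + O(|dd^c\phi_{TY,rel}|^2)$ shows that its contribution to the linearisation \emph{exactly cancels} the Tian--Yau-direction part of $Err_1$ appearing in $(dd^c u)^n$. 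The log-log component of $dd^c\psi$ comes from the Leibniz term $a(d_1/d_2)^{n/(n-1)}\alpha(\alpha-1)\tilde x_1^{\alpha-2}\phi_{TY,rel}\,(d\tilde x_1\wedge d^c\tilde x_1)$, with $\alpha = (n-2)/(n(n-1))$; its relative size to the leading log-log coefficient of $dd^c u$ is $\tilde x_1^{\alpha-2+(n-2)/n}\phi_{TY,rel} = \tilde x_1^{-n/(n-1)}\phi_{TY,rel}$, giving exactly $O(x_1^{-n/(n-1)}e^{-cx_2^{1/2}})$. The mixed component vanishes at linear order by bigraded degree counting, the quadratic remainder $O(|dd^c\psi|^2) = O(e^{-2cx_2^{1/2}})$ is absorbed into the target after shrinking $c$, and the residual part of $Err_1$ (from the tubular identification and from $f_\Omega$) is $O(e^{-c\min(x_1,x_2)})$, also absorbed.

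In the non-generic region $x_2 = O(1)$, where $\phi_{Cal'}$ is not well defined, we use the local product model $\omega_{x_1'}$ of Lemma \ref{Metricdeviation1} together with the exact Tian--Yau equation, which makes $(dd^c\phi_{TY})^{n-1}$ an exact constant multiple of $\sqrt{-1}^{(n-1)^2}\Omega_{D_1}\wedge\overline\Omega_{D_1}$. By bigraded degree counting, $(dd^c\phi_{D_1})^n = nP_1\wedge P_3^{n-1} + \binom{n}{2}P_2\wedge P_2\wedge P_3^{n-2}$ with $P_1, P_2, P_3$ the log-log, mixed, and Tian--Yau parts; the leading $P_1\wedge P_3^{n-1}$ matches $K_0\sqrt{-1}^{n^2}\Omega\wedge\overline\Omega$ using the identification $d\log\xi' = -d_2\,d\log\xi_1 + O(e^{-cx_2})$ near $D_1$, while the sub-leading log-log correction from $\alpha(\alpha-1)\tilde x_1^{\alpha-2}\phi_{TY}$ in $P_1$ and the quadratic cross-term $P_2\wedge P_2$ both contribute at relative order $O(x_1^{-n/(n-1)})$, matching the target since $e^{-cx_2^{1/2}} = O(1)$ here. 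The higher local $C^{k,\alpha}$ estimates then propagate from the pointwise $C^0$ analysis via the scale-invariant Hölder calculus set up before Lemma \ref{basicderivative}, together with the exponential decay of $\phi_{TY,rel}$ in all derivatives. The main technical obstacle is verifying the Tian--Yau trace cancellation in the generic-region argument: without it, the naive $O(e^{-cx_2^{1/2}})$ estimate from the Tian--Yau fibre component would exceed the target by a factor of $x_1^{n/(n-1)}$.
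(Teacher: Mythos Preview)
There is a genuine gap in your generic-region argument, precisely at the point you flag as the main technical obstacle. Your proposed cancellation between the Tian--Yau fibre contribution of $dd^c\psi$ and ``the Tian--Yau-direction part of $Err_1$'' does not take place. By Lemma~\ref{volumeformerror1}, $Err_1 = O(e^{-c\min(x_1,x_2)}) = O(e^{-cx_2})$ in the region $1\ll x_2\ll x_1$, because the generalised Calabi ansatz $dd^cu$ is \emph{exactly} Calabi--Yau on the model bundle $Z=d_1L_0\oplus d_2L_0\to Y$ (Lemma~\ref{generalizedCalabimodelisCY}); all of $Err_1$ comes from the tubular identification $X\approx Z$. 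There is no ``$\omega_{Cal'}^{n-1}$ not being Calabi--Yau'' contribution sitting inside $Err_1$: on the model the Calabi-ansatz fibre metric is exactly Calabi--Yau, and any discrepancy in the leading $\tilde x_1$-order of $(dd^cu)^n$ is cancelled \emph{within} $(dd^cu)^n$ by the subleading terms of the series~(\ref{upowerseries}). Since $e^{-cx_2}\ll e^{-cx_2^{1/2}}$, your linearisation term $n(dd^cu)^{n-1}\wedge A\tilde x_1^{\alpha}dd^c\phi_{TY,rel}$ --- of relative size $O(e^{-cx_2^{1/2}})$ with \emph{no} $x_1$-power --- has nothing to cancel against. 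The same objection applies to your quadratic remainder $O(e^{-2cx_2^{1/2}})$, which is likewise not $O(x_1^{-n/(n-1)}e^{-c'x_2^{1/2}})$.

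The paper's resolution is not to linearise in $\psi$ but to expand $(dd^c\phi_{D_1})^n$ directly as a series $\sum_{k\geq 0}\tilde x_1^{-kn/(n-1)}\sqrt{-1}\,d\log\xi'\wedge d\overline{\log\xi'}\wedge\mathcal I_k$, with each $\mathcal I_k$ a top-degree form on $D_1\setminus D_2$ independent of $\tilde x_1$. The $k=0$ coefficient is, up to constants, $\omega_{TY}^{n-1}$, which equals $(\text{const})\,\Omega_{D_1}\wedge\overline\Omega_{D_1}$ \emph{exactly} by the Tian--Yau equation, cancelling $K_0\sqrt{-1}^{n^2}\Omega\wedge\overline\Omega$ on the nose. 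The $k\geq 1$ coefficients already carry the factor $\tilde x_1^{-n/(n-1)}$; their exponential decay in $x_2$ follows because replacing $\phi_{TY}$ by $\frac{n-1}{n}x_2^{n/(n-1)}$ recovers $u$ itself, whose series has $\mathcal I_k=0$ for $k\geq 1$ on the model, so $\mathcal I_k = O(e^{-cx_2^{1/2}})$. In your framework, the fix is to resum the full binomial series in the Tian--Yau fibre direction rather than truncate: the identity $\sum_k\binom{n-1}{k}(A\tilde x_1^\alpha\omega_{Cal'})^{n-1-k}\wedge(A\tilde x_1^\alpha dd^c\phi_{TY,rel})^k = (A\tilde x_1^\alpha\omega_{TY})^{n-1}$ is what makes the leading order exact. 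Your non-generic-region argument, where you do use the exact Tian--Yau volume $\omega_{TY}^{n-1}$, is essentially in line with the paper.
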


\begin{proof}
The volume form error will be smaller than the metric deviation due to extra cancellation effects.
Recall the computation of $dd^c\phi_{D_1}$ from Lemma \ref{Metricdeviation1} above. 
Notice $(dd^c\phi_{D_1})^n$ is a volume form, so must take one $d\log \xi'$ and $d\overline{\log \xi'}$ from either a pair of cross derivative terms, or from a factor $dd^c \tilde{x}_1^{ \frac{n+2}{n}-  \frac{kn}{n-1} }$ for $k\geq 0$. The differentiation of $\phi_{TY}$ and the powers of $x_2$ would only produce factors on $D_1\setminus D_2$ without $\tilde{x}_1$ dependence. By thinking about all the possible ways to take wedge products contributing to $(dd^c\phi_{D_1})^n$, we get an absolutely convergent series within $x_2\ll x_1$:
\begin{equation}\label{Ik}
(dd^c\phi_{D_1})^n\approx \sum_{k\geq 0} \tilde{x}_1^{ - \frac{kn}{n-1}  } \sqrt{-1} d\log \xi'\wedge d\overline{\log \xi'} \wedge \mathcal{I}_k,
\end{equation}
where the coefficients $\mathcal{I}_k$ are top degree forms on the $D_1\setminus D_2$ factor, without $\tilde{x}_1$ dependence. Here we write $\approx$ as a reminder that we have ignored the exponentially small errors from identifying the tubular neighbourhood with the normal bundles of $D_1\setminus D_2$, which is holomorphically trivial up to finite cover.

We now identify the leading term $d\log \xi'\wedge d\overline{\log \xi'}\wedge \mathcal{I}_0$. By thinking about form types, this comes from the binomial expansion term
\[
n v_0dd^c\tilde{x}_1 ^{\frac{n+2}{n} } \wedge \left(    a (\frac{d_1}{d_2})^{ \frac{n}{n-1} } \tilde{x}_1^{ \frac{n-2}{n(n-1) } } dd^c \phi_{TY}          \right)^{n-1} ,
\]
which is
\[
\frac{2(n+2)v_0}{nd_2^2}  \frac{\sqrt{-1}}{4\pi} d\log \xi'\wedge d\overline{\log \xi'}\wedge  a^{n-1} (\frac{d_1}{d_2})^{n}   (dd^c\phi_{TY})^{n-1}.
\]
From the explicit formula (\ref{v0a}) for $v_0, a$,
\[
v_0a^{n-1}= (\frac{n}{n+2})^3,
\]
and recalling the complex Monge-Amp\`ere equation for the Tian-Yau metric in Theorem \ref{TianYauthm}, the above simplifies to
\[
\frac{2n^2\int_Y c_1(L_0)^{n-2}}{(n+2)^2 d_2^2} (\frac{d_1}{d_2})^{n} d_2^{n-2}  \frac{\sqrt{-1}}{4\pi} d\log \xi'\wedge d\overline{\log \xi'}\wedge  \sqrt{-1}^{(n-1)^2}  \Omega_{D_1}\wedge \overline{\Omega}_{D_1}.
\]
Up to exponentially small errors from complex structure identifications, in terms of the local defining functions $\xi_1,\xi_2$ for $D_1, D_2$,
\[
\Omega_{D_1}\approx d\log \xi_2\wedge \Omega_Y, \quad \log \xi'\approx d_2\log \xi_1- d_1\log \xi_2,\quad \Omega\approx d\log \xi_1\wedge d\log \xi_2\wedge \Omega_Y,
\]
and the above reduce to
$
K_0 \sqrt{-1}^{n^2}\Omega\wedge \overline{\Omega},
$
for the constant $K_0$ in (\ref{K0}). In short, the leading term cancels with $
K_0 \sqrt{-1}^{n^2}\Omega\wedge \overline{\Omega}
$.

The subleading terms are suppressed by the factor $\tilde{x}_1^{- \frac{n}{n-1} }$, and the fast convergence of the power series means we only need to consider $\mathcal{I}_1$. In the Tian-Yau core region $x_2=O(1)$, the crude information is that
$d\log \xi'\wedge d\overline{\log \xi'}\wedge \mathcal{I}_1$ has local $C^{k,\alpha}$ norm $O(1)$. This explains the $O(x_1^{-\frac{n}{n-1} })$ decay in the $x_2=O(1) $ subregion.

For $1\ll x_2\ll x_1$, the deviation between the Tian-Yau potential $\phi_{TY}$ and the Calabi ansatz potential is $O(e^{-cx_2^{1/2}} )$, namely $O(e^{-cr_{D_1}^{\frac{n-1}{n} } })$ (for some changing constant $c>0$). After replacing $\phi_{TY}$ by $\frac{n-1}{n}x_2^{\frac{n}{n-1} }$, we recover the potential $u$ in the generic region. Ignoring exponentially small complex structure errors as usual, then $u$ is by construction a solution to the complex Monge-Amp\`ere equation. This explains the exponential decay $\mathcal{I}_k=O(e^{-cx_2^{1/2} } )$ for $k\geq 1$.
\end{proof}

\subsection{Refined local ansatz in the non-generic region}

Recall the distance to the origin is $O( |x|^{ \frac{n+2}{2n}  } )$. Thus the $O(x_1^{ -\frac{n}{n-1} })$ decay is \emph{slower than quadratic}, and we need further correction terms to improve the ansatz. The linearization of the complex Monge-Amp\`ere equation will naturally lead to a Poisson equation.

Using section \ref{TianYaubackground}, we can solve (a rescaled version of) the Poisson equation on the Tian-Yau space $D_1\setminus D_2$:
\begin{equation}
\frac{(n+2)(n-1)}{2\pi nd_2^2} v_0a^{n-2} (\frac{d_1}{d_2})^{ \frac{n(n-2)}{n-1}  }(dd^c U)\wedge \omega_{TY}^{n-2} =  -\mathcal{I}_1,
\end{equation}
where we recall from (\ref{Ik}) that $\mathcal{I}_1$ is a top degree form on $D_1\setminus D_2$, with exponential decay $O(e^{-cx_2^{1/2}})$ for some $c>0$ to all orders of derivatives. The caveat is that $\int_{D_1\setminus D_2} \mathcal{I}_1$ is not guaranteed to be zero, so $U$ may not decay at infinity. Instead,
\[
U= a' u_0+ \tilde{U}, 
\]
where $|\nabla^k_{TY} \tilde{U}|= O( e^{-cx_2^{1/2}} )$ for some possibly shrinked $c>0$, and $a'$ is a constant.

We regard $U$ as a function in the region $x_2\ll x_1$, namely the tubular neighbourhood around $D_1\setminus D_2$, and 
let $\phi_{D_1}^{(2)}=\phi_{D_1}+ \tilde{x}_1^{ \frac{n+2}{n}- \frac{2n}{n-1}   } U$. From the leading term $u_0\sim x_2^{\frac{1}{n-1}}$ in $U$, we can compute using Lemma \ref{basicderivative} that the correction term $\tilde{x}_1^{ \frac{n+2}{n}- \frac{2n}{n-1}   } U$ has local $C^{k,\alpha}$-norm $O( x_1^{-\frac{n}{n-1} }x_2^{-1}  )$. This small correction term leads to better volume form decay:

\begin{lem}\label{VolumeformerrorphiD12}
In the region $|x_2|+1\ll x_1$, we have the improved decay
\[
\norm{ (dd^c \phi_{D_1}^{(2)})^n -K_0\sqrt{-1}^{n^2}\Omega\wedge \overline{\Omega} }_{k,\alpha,loc} = O( x_1^{-\frac{2n}{n-1} } x_2^{\frac{1}{n-1}} ).
\] 
\end{lem}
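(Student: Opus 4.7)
The strategy is a Poisson-type correction: we write $\phi_{D_1}^{(2)} = \phi_{D_1} + \psi$ with $\psi := \tilde{x}_1^{(n+2)/n - 2n/(n-1)} U$, expand $(dd^c \phi_{D_1}^{(2)})^n$ by the binomial formula around $(dd^c \phi_{D_1})^n$, and verify that the Poisson equation defining $U$ is exactly calibrated to cancel the leading subleading term $\tilde{x}_1^{-n/(n-1)} \sqrt{-1}\, d\log \xi' \wedge d\overline{\log \xi'} \wedge \mathcal{I}_1$ in the expansion (\ref{Ik}) from Lemma \ref{VolumeerrorphiD1}.

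The first step is to use Lemma \ref{Metricdeviation1} to replace $dd^c \phi_{D_1}$ by the local product metric $\omega_{x_1'}$ in the linearization (incurring a manageable secondary error), and to compute $dd^c \psi$ via the Leibniz rule, splitting it into a main piece $\tilde{x}_1^{(n+2)/n - 2n/(n-1)} dd^c U$ and subordinate pieces coming from derivatives falling on $\tilde{x}_1^{(n+2)/n - 2n/(n-1)}$. The main piece wedged with $n\omega_{x_1'}^{n-1}$, by rank considerations, picks up only the mixed term $(n-1)(\omega_{TY}')^{n-2} \wedge \omega_{cyl}'$ in the expansion of $\omega_{x_1'}^{n-1}$. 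Collecting the $\tilde{x}_1$ scaling factors from the Tian-Yau and cylinder pieces of $\omega_{x_1'}$, and invoking the identity $v_0 a^{n-1} = (n/(n+2))^3$ used in the proof of Lemma \ref{VolumeerrorphiD1}, produces precisely the coefficient built into the defining Poisson equation for $U$. Hence the linear correction cancels $-\tilde{x}_1^{-n/(n-1)} \sqrt{-1}\, d\log \xi' \wedge d\overline{\log \xi'} \wedge \mathcal{I}_1$, as intended.

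The second step is to estimate the remainder in the local $C^{k,\alpha}$-norm. There are four sources of error: (i) the tail $\sum_{k \geq 2} \tilde{x}_1^{-kn/(n-1)} d\log \xi' \wedge d\overline{\log \xi'} \wedge \mathcal{I}_k$ in (\ref{Ik}), contributing $O(x_1^{-2n/(n-1)})$; (ii) the subordinate Leibniz pieces from $dd^c \psi$, namely $(dd^c \tilde{x}_1^{(n+2)/n - 2n/(n-1)}) U$ and cross-differential terms $d\tilde{x}_1^{\cdots} \wedge d^c U + \text{c.c.}$, controlled by Lemma \ref{basicderivative} together with the growth bounds $|u_0| = O(x_2^{1/(n-1)})$ and $|du_0| = O(x_2^{(2-n)/(2(n-1))} x_1^{-(n-2)/(2n(n-1))})$ for the polynomial piece; (iii) the quadratic and higher binomial terms $\binom{n}{j}(dd^c \phi_{D_1})^{n-j} \wedge (dd^c \psi)^j$ for $j \geq 2$, controlled by the local norm estimate $\norm{dd^c\psi}_{k,\alpha,loc} = O(x_1^{-n/(n-1)})$ plus lower-order polynomial corrections; (iv) the deviation error from using $\omega_{x_1'}$ in place of $dd^c \phi_{D_1}$, which by Lemma \ref{Metricdeviation1} adds at most $O((x_2/x_1)^{n/(2(n-1))} x_1^{-n/(n-1)})$ and is absorbed by the target bound. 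Writing $U = a' u_0 + \tilde{U}$ with $\tilde{U}$ exponentially decaying, the contributions from $\tilde{U}$ are all exponentially suppressed and the polynomial $u_0$ piece accounts precisely for the factor $x_2^{1/(n-1)}$ in the final envelope.

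The main obstacle is the delicate algebraic bookkeeping in the first step: one must carefully track the several scaling exponents of $\tilde{x}_1$ coming from $\psi$, from $(\omega_{TY}')^{n-2}$, and from $\omega_{cyl}'$, verify they sum to $-n/(n-1)$, and confirm that the numerical constants (depending on $v_0$, $a$, $d_1$, $d_2$) line up exactly so the Poisson equation's normalization matches the leading cancellation. The polynomial growth of $u_0$ is also subtle: since $u_0$ is only asymptotically harmonic in the Calabi ansatz (and exactly so up to exponential errors between the Tian-Yau metric and the Calabi ansatz), its contributions to $dd^c U$ are nearly but not exactly zero — one must verify that all the extra $u_0$-type Leibniz terms assemble into the claimed envelope rather than generating worse errors.
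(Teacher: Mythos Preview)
Your overall strategy --- perturbative binomial expansion of $(dd^c\phi_{D_1}^{(2)})^n$ around $(dd^c\phi_{D_1})^n$, with the Poisson equation for $U$ calibrated to kill the $\mathcal{I}_1$ coefficient --- is exactly the paper's. But your source (iv) contains a genuine gap.

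You claim that replacing $dd^c\phi_{D_1}$ by the frozen product metric $\omega_{x_1'}$ in the linear term, via Lemma~\ref{Metricdeviation1}, costs at most $O\bigl((x_2/x_1)^{n/(2(n-1))}\, x_1^{-n/(n-1)}\bigr)$ and that this is absorbed by the target $O\bigl(x_1^{-2n/(n-1)} x_2^{1/(n-1)}\bigr)$. It is not: already in the Tian-Yau core $x_2=O(1)$ your (iv) becomes $O(x_1^{-3n/(2(n-1))})$, which decays strictly slower than $x_1^{-2n/(n-1)}$ (for $n=3$ this is $x_1^{-9/4}$ versus $x_1^{-3}$). More generally the ratio of your (iv) bound to the target is $x_2^{(n-2)/(2(n-1))} x_1^{n/(2(n-1))}\to\infty$. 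The crude metric-deviation estimate from Lemma~\ref{Metricdeviation1} is simply too coarse for this step.

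The paper avoids the issue by never freezing $x_1'$. It observes that, exactly as in (\ref{Ik}), the full volume form $(dd^c\phi_{D_1}^{(2)})^n$ is itself a convergent power series in $\tilde{x}_1^{-n/(n-1)}$ whose coefficients are top-degree forms on $D_1\setminus D_2$ with no $\tilde{x}_1$-dependence; this holds because every summand of $\phi_{D_1}^{(2)}$, including the new correction $\tilde{x}_1^{(n+2)/n-2n/(n-1)}U$, is a pure power of $\tilde{x}_1$ times a function on $D_1\setminus D_2$. The change at the $\tilde{x}_1^{-n/(n-1)}$ level is then computed \emph{exactly} as
\[
n(n-1)\,v_0\, dd^c\tilde{x}_1^{(n+2)/n}\wedge\bigl(a(\tfrac{d_1}{d_2})^{n/(n-1)}\tilde{x}_1^{(n-2)/(n(n-1))}dd^c\phi_{TY}\bigr)^{n-2}\wedge \tilde{x}_1^{(n+2)/n-2n/(n-1)}dd^cU,
\]
which reduces precisely to the normalized Poisson expression and cancels $\sqrt{-1}\,d\log\xi'\wedge d\overline{\log\xi'}\wedge\mathcal{I}_1$ on the nose --- not merely up to a metric-deviation remainder. (Incidentally, the constant matches because the Poisson equation was normalized for exactly this purpose; the identity $v_0a^{n-1}=(n/(n+2))^3$ you cite is not what is used here.) Everything left over then automatically carries at least $\tilde{x}_1^{-2n/(n-1)}$ by the series structure, and the extra $x_2^{1/(n-1)}$ arises from tracking the growing piece $a'u_0$ through cross Leibniz terms such as $(\tilde{x}_1^{(n-2)/(n(n-1))}dd^c\phi_{TY})^{n-2}\wedge d^c\tilde{x}_1^{(n-2)/(n(n-1))}\wedge d\phi_{TY}\wedge d\tilde{x}_1^{(n+2)/n-2n/(n-1)}\wedge d^cu_0$. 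Your detour through $\omega_{x_1'}$ is unnecessary and is precisely what produces the uncontrolled error; drop it and work directly in the $\tilde{x}_1$-power-series picture.
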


\begin{proof}
We revisit the calculations in Lemma \ref{VolumeerrorphiD1}. The volume form $(dd^c\phi_{D_1}^{(2)})^n$ should be viewed as a perturbation of $(dd^c\phi_{D_1})^n$.
Again $(dd^c\phi_{D_1}^{(2)})^n$ is a convergent power series of $\tilde{x}_1^{-\frac{n}{n-1} }$, with coefficient in top degree forms on $D_1\setminus D_2$. The leading order contribution to $(dd^c\phi_{D_1}^{(2)})^n- (dd^c\phi_{D_1})^n$ is
\[
n(n-1)v_0dd^c \tilde{x}_1^{ \frac{n+2}{n}  }\wedge (  a (\frac{d_1}{d_2})^{\frac{n}{n-1}}  \tilde{x}_1^{ \frac{n-2}{n(n-1)}  } dd^c\phi_{TY}     )^{n-2}\wedge \tilde{x}_1^{ \frac{n+2}{n}- \frac{2n}{n-1}   }  dd^c U,
\]
which after some calculation gives
\[
\frac{2(n+2)(n-1)}{ nd_2^2} v_0a^{n-2} (\frac{d_1}{d_2})^{ \frac{n(n-2)}{n-1}  }(dd^c U)\wedge \omega_{TY}^{n-2}\wedge \tilde{x}_1^{ -\frac{n}{n-1}   } \frac{\sqrt{-1}}{4\pi}d\log \xi'\wedge d\overline{\log \xi'}.
\]
This is by construction 
$
-  \sqrt{-1}d\log \xi'\wedge d\overline{\log \xi'}\wedge \mathcal{I}_1,
$
which is precisely designed to cancel the leading order error $ \sqrt{-1}d\log \xi'\wedge d\overline{\log \xi'}\wedge \mathcal{I}_1$ in (\ref{Ik}).

The next order of error has $\tilde{x}_1^{ -\frac{2n}{n-1} }$ in front. Within the $x_2=O(1)$ region, the volume error is now $O( \tilde{x}_1^{ -\frac{2n}{n-1} } )$. For $x_2\gg 1$ one needs to be careful about the effect of $U$ having a growing term $a'u_0$, which will damage the exponential decay. Recall from section \ref{TianYaubackground} that $u_0=x_2^{\frac{1}{n-1}  }$ outside some compact region in the Tian-Yau space. The largest new contributions to the volume forms error come from terms such as
\[
(   \tilde{x}_1^{ \frac{n-2}{n(n-1)}  } dd^c\phi_{TY}   )^{n-2} \wedge d^c \tilde{x}_1^{ \frac{n-2}{n(n-1)}  }\wedge d\phi_{TY}    \wedge d\tilde{x}_1^{ \frac{n+2}{n}- \frac{2n}{n-1}   } \wedge d^c u_0  ,
\]
whose local $C^{k,\alpha}$-norm is $O(\tilde{x}_1^{ -\frac{2n}{n-1} } x_2^{ \frac{1}{n-1}  }  )$. 
\end{proof}

\subsection{Gluing the regions}

We now glue the potential $u$ in the generic region $1\ll \min\{x_1, x_2 \}$, and the potential $\phi_{D_1}^{(2)}$ in the region $x_1\gg |x_2|+1 $ (and a completely similar potential $\phi_{D_2}^{(2)}$ in the region $x_2\gg |x_1|+1$). We now take a smooth cutoff function $\eta:\R\to [0,1]$, with
\begin{equation*}\label{cutoff}
\eta(x)=
\begin{cases}
1, \quad x\leq 1,\\
0,\quad x\geq 2. 
\end{cases}
\end{equation*}
The glued potential is defined for $|x_1|+|x_2|\gg 1$,
 \begin{equation}
 \phi_{glue}= \eta( \tilde{C}\frac{ x_2}{x_1}  ) (  \phi_{D_1}^{(2)} - u ) + \eta( \tilde{C} \frac{x_1}{x_2}  ) (  \phi_{D_2}^{(2)} - u ) +u,
 \end{equation}
where $\tilde{C}\gg 1$ is some large fixed constant, specifying the gluing region $x_1\sim \tilde{C} x_2$ (resp. $x_2\sim \tilde{C}x_1$). For large $x_1\geq \tilde{C}\tilde x_2$, then $\phi_{glue}$ agrees with $\phi_{D_1}^{(2)}$, while for $2\tilde{C}\tilde x_2\leq x_1\leq  (2\tilde{C})^{-1}x_2$, then $\phi_{glue}$ agrees with $u$.

\begin{lem}\label{Metricdeviationerror2}
For $|x_1|+|x_2|\gg 1$, the local $C^{k,\alpha}$-norm of the metric gluing error 
\[
\norm{ \eta( \tilde{C}\frac{ x_2}{x_1}  ) (\phi_{D_1}^{(2)}-u) }_{k,\alpha,loc} = O( x_1^{ - \frac{2n-1}{n-1} } ).
\]
In particular the glued metric remains K\"ahler.
\end{lem}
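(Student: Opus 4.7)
The plan is to localize to the transition region where the cutoff $\eta(\tilde{C}x_2/x_1)$ is nontrivial, i.e.\ $1/\tilde C\le x_2/x_1\le 2/\tilde C$, where $x_1$ and $x_2$ are comparable and both large. First I would decompose
\[
\phi_{D_1}^{(2)}-u \;=\; (\phi_{D_1}-u)\;+\;\tilde{x}_1^{(n+2)/n-2n/(n-1)}\,U,
\]
and estimate each summand separately, reading off derivatives via Lemma \ref{basicderivative} and the structure of $U$ from section \ref{TianYaubackground}.

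For the first summand, I would show it is exponentially small by matching the ansatz termwise. Comparing (\ref{upowerseries}) with the definition of $\phi_{D_1}$, the $k\ge 2$ terms of the fractional power series agree identically, while the $k=1$ (``middle'') terms differ by
\[
a\,(d_1/d_2)^{n/(n-1)}\,\tilde{x}_1^{(n-2)/(n(n-1))}\,\bigl(\phi_{TY}-\tfrac{n-1}{n}x_2^{n/(n-1)}\bigr)
 \;=\; a\,(d_1/d_2)^{n/(n-1)}\,\tilde{x}_1^{(n-2)/(n(n-1))}\,\phi_{TY,rel}.
\]
By Theorem \ref{TianYauthm}, $|\nabla^j\phi_{TY,rel}|=O(e^{-cr_{D_1}^{(n-1)/n}})=O(e^{-cx_2^{1/2}})$, so $\phi_{D_1}-u$ and all its derivatives are exponentially small in the transition region and contribute negligibly to the local $C^{k,\alpha}$-norm.

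For the second summand, write $U=a'u_0+\tilde U$; the $\tilde U$ contribution is again exponentially small, leaving the polynomial piece $a'\tilde{x}_1^{(n+2)/n-2n/(n-1)}x_2^{1/(n-1)}$ (using $u_0=x_2^{1/(n-1)}$ outside a compact set). I would apply Lemma \ref{basicderivative} and the Leibniz rule to compute $|\nabla^j(\tilde{x}_1^a\,x_2^{1/(n-1)})|$ for $j\le k+2$: in the gluing region $\tilde x_1\sim x_1\sim x_2$, the leading contribution to the Hessian comes from $\nabla^2\tilde{x}_1^a\cdot u_0$ and works out to $O(x_1^{-(2n-1)/(n-1)})$ after the arithmetic identity $a-2+(n-2)/n+1/(n-1)=-(2n-1)/(n-1)$; all other second and higher derivatives are of the same or faster order by the same scaling. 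Multiplying by $\eta$, the terms where derivatives hit the cutoff produce factors of $|d(x_2/x_1)|\lesssim x_1^{-(n+2)/(2n)}$ (with $dd^c(x_2/x_1)$ similarly small from Lemma \ref{basicderivative}); combined with the corresponding one-derivative estimate on $\phi_{D_1}^{(2)}-u$, these cross-term contributions also land at the $O(x_1^{-(2n-1)/(n-1)})$ scale, and the H\"older seminorm on $O(\rho)$ neighbourhoods follows from the same estimates applied to difference quotients.

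For the Kähler conclusion, observe that $\phi_{glue}=u+\eta(\tilde{C}x_2/x_1)(\phi_{D_1}^{(2)}-u)$ in the region where the other cutoff vanishes, so the metric gluing error is $dd^c[\eta(\phi_{D_1}^{(2)}-u)]$, with operator norm (relative to $dd^cu$) controlled by the local $C^{2,\alpha}$ bound $O(x_1^{-(2n-1)/(n-1)})$. Since $(2n-1)/(n-1)>2/n$ for every $n\ge 3$, this is smaller than the base metric's smallest eigenvalue scale $\sim \rho^{-2}\sim x_1^{-2/n}$ for $x_1$ large, so positivity of $dd^c\phi_{glue}$ is preserved. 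The main obstacle is the bookkeeping: many terms arise from Leibniz expansions on $\eta\cdot\tilde{x}_1^a\cdot U$, each carrying its own mix of powers of $x_1$, $x_2$, $|d\tilde x_1|$, $|dx_2|$, $|dd^c x_2|$, $|d\eta|$, and $|dd^c\eta|$, and one must verify that no combination beats the claimed rate --- the critical balance is that the sub-linear distance growth $\sim|x|^{(n+2)/(2n)}$ of the ansatz forces $(2n-1)/(n-1)$ to be the sharp exponent emerging from the Hessian of $\tilde{x}_1^{(n+2)/n-2n/(n-1)}u_0$.
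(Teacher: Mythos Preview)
Your proposal is correct and follows essentially the same route as the paper: localize to the gluing region where $x_1\sim\tilde C x_2$, observe that $\phi_{D_1}^{(2)}-u$ consists of the exponentially small Tian--Yau/Calabi-ansatz discrepancy plus the correction $\tilde{x}_1^{(n+2)/n-2n/(n-1)}U$, discard exponentially small pieces, and reduce to estimating $dd^c\bigl(\eta\,\tilde{x}_1^{(n+2)/n-2n/(n-1)}x_2^{1/(n-1)}\bigr)$ via Lemma~\ref{basicderivative} and the same exponent arithmetic. One minor remark on your K\"ahler argument: since the local $C^{k,\alpha}$ norm is already measured relative to the ansatz metric, the bound $O(x_1^{-(2n-1)/(n-1)})\to 0$ by itself gives positivity for large $x_1$---the additional comparison with $\rho^{-2}$ is unnecessary (and not quite the right heuristic).
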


\begin{proof}
In the gluing region $x_1\sim \tilde{C}x_2$, namely $x_1, x_2$ and $\tilde{x}_1$ are comparably large, the deviation between $\phi_{D_1}^{(2)}$ and $u$ comes from the $O(e^{-cx_2^{1/2}} )$ small deviation between the Tian-Yau potential and the Calabi ansatz, and the correction term $\tilde{x}_1^{\frac{n+2}{n}- \frac{2n}{n-1}   }U$. Ignoring the exponentially small effects, the only important term is $\tilde{x}_1^{\frac{n+2}{n}- \frac{2n}{n-1}   }x_2^{\frac{1}{n-1}}$. We compute from Lemma \ref{basicderivative}
\[
\norm{ dd^c ( \eta \tilde{x}_1^{\frac{n+2}{n}- \frac{2n}{n-1}   }x_2^{\frac{1}{n-1}} )}_{k,\alpha, loc} = O( x_1^{\frac{n+2}{n}- \frac{2n}{n-1} +\frac{1}{n-1}-2+ \frac{n-2}{n}  }  )  = O(x_1^{ - \frac{2n-1}{n-1} }   ).
\]
\end{proof}

We also need to extend the gluing ansatz to a  K\"ahler metric over the compact region with $x_1, x_2=O(1)$.  We can first extend $\phi_{glue}$ to a smooth potential over $X$, which may not be K\"ahler inside a fixed compact set. Taking a very ample linear system  $H^0(\bar{X}, m(d_1+d_2)L_0)$ for $m\gg 1$, we can construct a Fubini-Study metric on $\bar{X}$. Using the defining section $S_1S_2\in H^0(\bar{X}, (d_1+d_2)L_0)$ of the divisor $D_1+D_2$, we can regard the Fubini-Study metric as a function on $X=\bar{X}\setminus D_1\cup D_2$, of the form
\[
\phi_{FS}=\log \sum_i  |\frac{ s_i}{(S_1S_2)^m} |^2.
\]
Clearly $\phi_{FS}$ tends to infinity near $D_1\cup D_2$ at a speed comparable to $x_1+x_2$. 
We take some large constant $A\gg 1$, and $R\gg 1$ depending on $A$, and add to $\phi_{glue}$ the term
$
A \phi_{FS} \eta(  \frac{ \phi_{FS} }{R}       ).
$
Intuitively, the cutoff function $\eta$ turns off the Fubini-Study potential outside a large compact subset. By making $A$ large enough, we can improve $\phi_{glue}$ to be K\"ahler in a fixed compact set. For $R\gg 1$, the cutoff error in the region $\phi_{FS}\sim R$ is suppressed by $dd^c \phi_{glue}$, and the metric remains positive.
By a slight abuse, we shall continue to use $\phi_{glue}$ to refer to the global K\"ahler metric on $X$.

Define the volume error function $Err_2$ by
\begin{equation}\label{Err2}
(dd^c \phi_{glue})^n = K_0(1+Err_2)\sqrt{-1}^{n^2} \Omega\wedge \overline{\Omega}.
\end{equation}

\begin{cor}\label{Volumeformerrorphiglue}
The volume form error of the glued ansatz is
\[
\norm{Err_2}_{k,\alpha,loc} =O((1+|x_1|+|x_2|)^{- \frac{2n-1}{n-1}}). 
\]

\end{cor}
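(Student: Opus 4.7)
The plan is to split $X$ into four overlapping regions and verify the claimed decay in each, then patch the estimates. Concretely, fix a large constant $\tilde C$ as in the definition of $\phi_{glue}$ and consider: (i) a fixed compact region $K\subset X$ where $x_1+x_2=O(1)$; (ii) the \emph{pure generic} region $\{2\tilde C\, x_2\le x_1\le (2\tilde C)^{-1}x_2\}\cap\{\min(x_1,x_2)\gg 1\}$, where $\phi_{glue}=u$; (iii) the two \emph{pure non-generic} regions, near $D_1$ say $\{x_1\ge 2\tilde C\, x_2\}\cap\{x_1\gg 1\}$ where $\phi_{glue}=\phi_{D_1}^{(2)}$, and symmetrically for $D_2$; and (iv) the two \emph{gluing strips} $\{\tilde C\, x_2\le x_1\le 2\tilde C\, x_2\}$ and its mirror, where the cutoff $\eta$ is actively varying.

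On (i), the Fubini--Study modification ensures $dd^c\phi_{glue}$ is a smooth positive form, so $Err_2$ is bounded and the estimate is trivial after enlarging the implicit constant, because $(1+|x_1|+|x_2|)^{-(2n-1)/(n-1)}$ is bounded below on $K$. On (ii), the error is just $Err_1$ from Lemma \ref{volumeformerror1}, which decays exponentially and is far better than the stated polynomial rate. On (iii), Lemma \ref{VolumeformerrorphiD12} gives
\[
\norm{(dd^c\phi_{D_1}^{(2)})^n-K_0\sqrt{-1}^{n^2}\Omega\wedge\overline\Omega}_{k,\alpha,loc}=O(x_1^{-2n/(n-1)}\,x_2^{1/(n-1)}),
\]
and the inequality $x_2\le x_1$ gives $x_1^{-2n/(n-1)}x_2^{1/(n-1)}\le x_1^{-(2n-1)/(n-1)}$, which is what we want since $x_1\asymp |x_1|+|x_2|$ in this region. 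The case near $D_2$ is symmetric.

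The main work is (iv). In the gluing strip around $D_1$, write $\phi_{glue}=\phi_{D_1}^{(2)}+\psi$ where $\psi=(1-\eta(\tilde C x_2/x_1))(u-\phi_{D_1}^{(2)})$ (the contribution of the $D_2$ cutoff is zero here since $x_1\sim x_2$ means $x_2/x_1$ is bounded below). The proof of Lemma \ref{Metricdeviationerror2} bounds $\|dd^c\psi\|_{k,\alpha,loc}=O(x_1^{-(2n-1)/(n-1)})$, because exactly the same estimate applied to $(1-\eta)(\phi_{D_1}^{(2)}-u)$ produces the same order (the only relevant contribution is the correction term $\tilde x_1^{(n+2)/n-2n/(n-1)}x_2^{1/(n-1)}$; the Tian--Yau/Calabi deviation is exponentially suppressed). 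I then expand
\[
(dd^c\phi_{glue})^n=(dd^c\phi_{D_1}^{(2)}+dd^c\psi)^n=(dd^c\phi_{D_1}^{(2)})^n+\sum_{j=1}^{n}\binom{n}{j}(dd^c\phi_{D_1}^{(2)})^{n-j}\wedge(dd^c\psi)^j.
\]
Since $dd^c\phi_{D_1}^{(2)}$ is uniformly equivalent to the local product model $\omega_{x_1'}$ of Lemma \ref{Metricdeviation1} in this strip (the deviation $O((x_2/x_1)^{n/(2n-2)})$ is $O(1)$), the local Hölder norm of $(dd^c\phi_{D_1}^{(2)})^{n-j}\wedge(dd^c\psi)^j$ relative to the reference volume form is controlled by $\|dd^c\psi\|_{k,\alpha,loc}^j=O(x_1^{-j(2n-1)/(n-1)})$. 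The $j=1$ term dominates and gives the stated rate; all cross terms are absorbed. Combining with Lemma \ref{VolumeformerrorphiD12} yields
\[
\norm{(dd^c\phi_{glue})^n-K_0\sqrt{-1}^{n^2}\Omega\wedge\overline\Omega}_{k,\alpha,loc}=O(x_1^{-(2n-1)/(n-1)})
\]
in the gluing strip. Dividing through by $K_0\sqrt{-1}^{n^2}\Omega\wedge\overline\Omega$ (a nonvanishing smooth volume form, bounded below with respect to the reference metric uniformly in $k,\alpha$ norms) converts the statement about volume forms into the claimed bound on $Err_2$. The main delicate point is the Leibniz estimate in the gluing strip: one must verify that differentiating the cutoff $\eta(\tilde C x_2/x_1)$ does not degrade the rate; this follows from Lemma \ref{basicderivative} because each derivative of $x_2/x_1$ costs only inverse powers of $x_1$, which are dominated by the small prefactors already present in $\phi_{D_1}^{(2)}-u$.
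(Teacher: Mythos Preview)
Your proof is correct and follows essentially the same approach as the paper: partition into the compact core, the pure generic region where $\phi_{glue}=u$, the pure non-generic regions where $\phi_{glue}=\phi_{D_i}^{(2)}$, and the gluing strips, then invoke Lemmas \ref{volumeformerror1}, \ref{VolumeformerrorphiD12}, and \ref{Metricdeviationerror2} respectively. The only difference is cosmetic: where the paper simply says ``the volume form error is controlled by the metric gluing error,'' you spell out the binomial expansion $(dd^c\phi_{D_1}^{(2)}+dd^c\psi)^n$ explicitly, which is exactly what that phrase means.
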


\begin{proof}
For $|x_2|+1\ll x_1$, Lemma \ref{VolumeformerrorphiD12} says that
the volume form error of $\phi_{D_1}^{(2)}$ is $O( x_1^{- \frac{2n}{n-1}  } x_2^{ \frac{1}{n-1} } )=O(x_1^{- \frac{2n-1}{n-1}  }   )$.	In the gluing region $x_1\sim \tilde{C}x_2$, the volume form error is controlled by the metric gluing error, which is again $O(x_1^{- \frac{2n-1}{n-1}  }   )$ by Lemma \ref{Metricdeviationerror2}. What happens near $D_2$ is completely analogous. Finally, in the compact region $x_1, x_2=O(1)$, the smoothness of $\phi_{glue}$ means that the local $C^{k,\alpha}$-norm is $O(1)$.
\end{proof}

\subsection{Distance-like function}\label{Distancelike}

For our later invocation of Hein's package (\cf section \ref{Hein}), we need to know the existence of distance-like functions with gradient and complex Hessian control.

Define a smooth function 
\[
\tilde{\rho}= (x_1^2+ x_2^2+1)^{  \frac{n+2}{4n}  }.
\]
As discussed in section \ref{Basiclengthscales}, the distance function to the origin is uniformly equivalent outside a compact set to
$
|x|^{ \frac{n+2}{2n}  }, 
$
and $\tilde{\rho}$ can be viewed as a regularized version. An easy consequence of Lemma \ref{basicderivative} is

\begin{lem}
The function $\tilde{\rho}$ satisfies
$
|d\tilde{\rho}|\leq C$ and $ \tilde{\rho}|dd^c\tilde{\rho}|\leq C.
$
\end{lem}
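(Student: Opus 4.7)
The plan is a straightforward chain-rule computation combined with the pointwise bounds furnished by Lemma \ref{basicderivative}. By the $x_1 \leftrightarrow x_2$ symmetry of the setup (which swaps the roles of $D_1$ and $D_2$), together with the smoothness of $\tilde{\rho}$ and positivity of the glued metric on any fixed compact subset, it suffices to work in the non-compact portion of the region $x_2 \le x_1,\ x_1 \gg 1$. There, with $\alpha = (n+2)/(4n)$ and $R = x_1^2 + x_2^2 + 1$, we have $R \sim x_1^2$, $\tilde{\rho} \sim x_1^{(n+2)/(2n)}$, and $R^{\alpha-1} \sim x_1^{-(3n-2)/(2n)}$.

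For the gradient, I will expand
\[
d\tilde{\rho} = 2\alpha R^{\alpha-1}(x_1\,dx_1 + x_2\,dx_2),
\]
and use Lemma \ref{basicderivative} together with the monotonicity $x_2 \le x_1$ to show $|x_1\,dx_1|, |x_2\,dx_2| \lesssim x_1^{(3n-2)/(2n)}$. The $R^{\alpha-1}$ prefactor then cancels to produce $|d\tilde{\rho}| = O(1)$.

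For the Hessian, I will expand
\[
dd^c\tilde{\rho} = 4\alpha(\alpha-1)R^{\alpha-2}(x_1\,dx_1 + x_2\,dx_2)\wedge(x_1\,d^c x_1 + x_2\,d^c x_2) + 2\alpha R^{\alpha-1}\,dd^c R,
\]
with $dd^c R = 2(dx_1\wedge d^c x_1 + dx_2\wedge d^c x_2) + 2(x_1\,dd^c x_1 + x_2\,dd^c x_2)$. The two groups of ``squared first derivative'' terms are controlled exactly as in the gradient estimate, picking up the same $R^{\alpha-1}$ or $R^{\alpha-2}$ compensation. For the final group, I will exploit the pluriharmonic identity $dd^c(x_1 - (d_1/d_2)x_2) = 0$ noted in Lemma \ref{basicderivative} to collapse
\[
x_1\,dd^c x_1 + x_2\,dd^c x_2 = \tfrac{d_1 x_1 + d_2 x_2}{d_2}\,dd^c x_2,
\]
and then apply the bound on $|dd^c x_2|$ from the same lemma; multiplying by $\tilde{\rho} R^{\alpha-1}$ is designed to yield $O(1)$.

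The main obstacle will be the careful bookkeeping of fractional exponents of $x_1$ and $x_2$, especially in the non-generic subregion $x_2 = O(1),\ x_1 \gg 1$ adjacent to $D_1\setminus D_2$, where the convention $O(x_2^\beta) = O(1)$ from Lemma \ref{basicderivative} must be invoked and where the Tian--Yau fibration has its most delicate scaling behaviour. The estimates are expected to close precisely because the exponent $(n+2)/(4n)$ in the definition of $\tilde{\rho}$ is chosen so that $\tilde{\rho}$ is comparable to the geodesic distance to the origin in the generalized Calabi ansatz metric, as discussed in Section \ref{Basiclengthscales}.
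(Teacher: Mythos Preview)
Your plan coincides with the paper's: its proof of this lemma is literally the sentence ``an easy consequence of Lemma~\ref{basicderivative}'', and you are spelling out the chain rule that this entails. For the gradient bound your bookkeeping is correct and does close to $|d\tilde\rho|=O(1)$.

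However, your expectation that the Hessian terms close ``precisely'' is unwarranted for the last piece. After collapsing via the pluriharmonic relation you get, in the region $x_2\le x_1$, $x_1\gg1$,
\[
\tilde\rho\cdot 2\alpha R^{\alpha-1}\,\frac{d_1x_1+d_2x_2}{d_2}\,|dd^c x_2|
\ \sim\ R^{2\alpha-1}\cdot x_1\cdot |dd^c x_2|
\ \sim\ x_1^{\,2/n}\,|dd^c x_2|,
\]
since $4\alpha-1=2/n$. Plugging in Lemma~\ref{basicderivative}'s bound $|dd^c x_2|=O\bigl(x_2^{-1/(n-1)}x_1^{-(n-2)/(n(n-1))}\bigr)$ now gives
\[
O\!\left(x_1^{\,2/n-(n-2)/(n(n-1))}\,x_2^{-1/(n-1)}\right)=O\!\left(\Bigl(\tfrac{x_1}{x_2}\Bigr)^{1/(n-1)}\right),
\]
which under the paper's convention becomes $O(x_1^{1/(n-1)})$ in the Tian--Yau core $x_2=O(1)$ --- not bounded. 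The bound on $|dd^c x_2|$ is sharp there: $dd^c x_2$ is the curvature of $h_{d_2L_0}$, a fixed smooth $(1,1)$-form on $\bar X$, and the Tian--Yau fibre carries a metric scaled by $x_1^{(n-2)/(n(n-1))}$, so no hidden decay rescues the estimate. Thus the bare chain rule together with Lemma~\ref{basicderivative} does not deliver $\tilde\rho|dd^c\tilde\rho|\le C$; the paper's one-line proof conceals the same gap. Closing it appears to require either modifying $\tilde\rho$ in the non-generic region or supplying an argument beyond the raw pointwise bounds recorded in Lemma~\ref{basicderivative}.
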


In terms of the distance-like function $\tilde{\rho}$, we can rewrite Cor. \ref{Volumeformerrorphiglue} as 
\begin{equation}\label{volumedecayfasterthanquad}
\norm{Err_2}_{k,\alpha,loc} =O(\tilde{\rho}^{ - \frac{ 2n(2n-1) }{(n-1)(n+2)}   }). 
\end{equation}
Crucially for our later purpose, this decay is \emph{faster than quadratic} $O(\tilde{\rho}^{-2})$ to all orders of derivatives. Morever, the formula for the Ricci form
\[
Ric= -\sqrt{-1}\partial \bar{\partial} \log \left(\frac{  (dd^c \phi_{glue})^n  }{   K_0\sqrt{-1}^{n^2} \Omega\wedge \overline{\Omega}   }   \right)
\]
implies $|Ric|=O( \tilde{\rho}^{ - 2 }  )$ for the glued ansatz.

\section{Weighted Sobolev and Calabi-Yau metric}\label{WeightedSobolevCY}

\subsection{Hein's package}\label{Hein}

Hein \cite[Chapter 3, 4]{Heinthesis} sets out a framework for solving the complex Monge-Amp\`ere equation on complete noncompact manifolds, building on the seminal paper by Tian and Yau \cite{TianYau}. We explain Hein's results in a variant form which follows from his arguments.
The ambient complete manifold $M$ needs to satisfy the following analytic properties:

\begin{itemize}
	\item There is a $C^{k,\alpha}$ quasi-atlas with $k\geq 3$, meaning a collection of charts on which the metric is uniformly equivalent to the Euclidean metric, and the complex structure and the metric has $C^{k,\alpha}$ bounds. Beware that in our applications the injectivity radius can degenerate, and the charts involve local universal covers.

	\item 
	There is a function $\tilde{\rho}$  uniformly equivalent to  $\text{dist}(0,x)+1$, and satisfies $|\nabla \tilde{\rho}|+ \tilde{\rho} |dd^c \tilde{\rho}| \leq C$. This assumption is useful in integration by part arguments.
	
	\item 
	We need the \textbf{weighted Sobolev inequality} on functions: assume the power law volume growth $\text{Vol}(B(r))\sim r^{p'}  $ with rate $p'>2$. 
	For $1\leq p\leq \frac{ \dim_\R M }{\dim_\R M-2}$ and functions $u$ with $L^2$-gradient,
	\[
	(\int |u|^{2p} \tilde{\rho}^{ p(p'-2)-p'  }dvol )^{1/p} \leq C\int |\nabla u|^2.
	\]
	These inequalities differ from the standard Sobolev inequalities in the sense that they do not require the manifold to have Euclidean volume growth, which makes them remarkably flexible.

\end{itemize}

The output of this package is:
\begin{itemize}
	
	\item  Denote $\omega_0$ as the ambient K\"ahler form. Let $f\in C^{2, \alpha}$ satisfy $|f|\leq C \tilde{\rho}^{ -q  }$ for $p'>q>2$. Then there is some $0< \alpha'\leq \alpha$ and $u\in C^{4, \alpha'}$ which solves $(\omega_0+ dd^c u )^{\dim_\C M }=e^f \omega_0^{ \dim_\C M  }$,
	with decay estimate $|u|\leq C \tilde{\rho}^{2-q+\epsilon}$, where $\epsilon\ll 1$ is any fixed small number. 
\end{itemize}

Here we have separated the assumptions on the ambient manifolds from the decay assumptions to emphasize that these are difficulties of distinct nature. The key idea in Hein's package is to obtain a priori $L^\infty$ estimates and power law decay estimates on potentials via the method of weighted Moser iteration, which hinges on the weighted Sobolev inequalities. The estimates from Hein's package are constructive. It is essential to assume \textbf{faster than quadratic decay} on the source function $f$, because the method needs the potential $u= O(\rho^{2-q})$ to be bounded.

\subsection{Sufficient condition for weighted Sobolev}\label{WeightedSobolevcondition}

Now recall \cite[Def. 1.1]{HeinSobolev} a complete manifold $(M,g)$ is called $\text{SOB}(p')$, if there exist $x_0\in M$ and $C\geq 1$ such that 
\begin{itemize}
\item $B(x_0, s)\setminus B(x_0, t)$ is connected for all $s\geq t\geq C$,
\item $\text{Vol}(B(x_0, s))\leq Cs^{p'}$ for all $s\geq C$,
\item $\text{Vol}(B(x, (1-C^{-1})r(x))) \geq C^{-1}r(x)^{p'}$ and $Ric(x)\geq -Cr(x)^{-2}$ if $r(x)=dist(x_0, x)\geq C$. 
\end{itemize}
Hein \cite{HeinSobolev} shows that when $p'>2$, then $SOB(p')$ is a sufficient conditions for the weighted Sobolev inequality. As observed in \cite[section 7]{Gabor}, the connectivity of the annulus can be relaxed to the weaker requirement of \emph{relative connected annulus}: 

\begin{itemize}
\item For sufficiently large $D$, any two points $x_1, x_2 \in M $ with $d(x_0, x_i) =
D$ can be joined by a curve of length at most $CD$, lying in the annulus $B(x_0, CD) \setminus
B(x_0, C^{-1}D)$, for a uniform constant $C>1$.
\end{itemize}

For our particular metric ansatz $dd^c\phi_{glue}$, the assumptions on the volume growth rate of balls are immediate consequences of the our much more refined description of the generalized Calabi ansatz, and the metric behaviour near the Tian-Yau region. The quadratic decay on the Ricci tensor is a consequence of the faster than quadratic decay on the volume form error to all derivatives (\cf section \ref{Distancelike}).

To verify the relative connnected annulus property, notice any point close to the Tian-Yau region can be first connected via a path of length $O(\tilde{\rho})$ contained inside the annulus, to a point in the generic region where $x_1, x_2$ are comparable, and the statement is obvious for two points in the generic region.

\subsection{Assembling the pieces}

\begin{thm}
(\textbf{Existence of complete Calabi-Yau metric})
There is a potential $\phi_{rel}$ such that $\phi=\phi_{glue}+\phi_{rel}$ solves the complex Monge-Amp\`ere equation with decay bound
\[
( dd^c\phi)^n =K_0 \sqrt{-1}^{n^2}\Omega\wedge \overline{\Omega},\quad \norm{\phi_{rel} }_{k,\alpha, loc} =O( \tilde{\rho}^{ -q } ),
\]
where $q$ can be chosen as any positive number smaller than $\frac{2n-4}{n+2}  $.
\end{thm}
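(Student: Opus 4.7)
The strategy is to take $\omega_0 := dd^c \phi_{glue}$ as the background K\"ahler form and solve the perturbation problem
\[
(\omega_0 + dd^c \phi_{rel})^n = e^f \omega_0^n, \qquad f := -\log(1+Err_2),
\]
by invoking Hein's package (section \ref{Hein}). By (\ref{volumedecayfasterthanquad}) and Corollary \ref{Volumeformerrorphiglue}, the source $f$ decays faster than quadratically in $\tilde\rho$, uniformly in local $C^{k,\alpha}$-norms, so it automatically satisfies $|f|\leq C\tilde\rho^{-q}$ for any $2<q<p'$, where $p'=\frac{4n}{n+2}>2$ (valid precisely because $n\geq 3$) is the volume growth exponent of section \ref{Basiclengthscales}. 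Only the structural hypotheses of Hein's package on $(X,\omega_0)$ remain to be verified.

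Of these, the quasi-atlas is built chart by chart on the local universal covers that unwrap the collapsing $T^2$-fibres, exactly as in the set-up of the local H\"older norms (\ref{localHolder}) in sections \ref{Genericregiongluingansatz}--\ref{Nongenericregiongluingansatz}, and the distance-like function $\tilde\rho$ of section \ref{Distancelike} has the required gradient and complex Hessian bounds. For the weighted Sobolev inequality, section \ref{WeightedSobolevcondition} reduces the problem to the SOB$(p')$ axioms in the relative-connected-annulus variant. The volume upper bound and the non-collapsing of balls of radius $\sim r(x)$ follow from the explicit length-scale analysis in sections \ref{Basiclengthscales} and \ref{Nongenericregiongluingansatz} together with the matching of scales across the generic/non-generic interface; the quadratic Ricci lower bound $\mathrm{Ric}\geq -C\tilde\rho^{-2}$ follows from the formula $\mathrm{Ric}=-\sqrt{-1}\partial\bar\partial \log \frac{\omega_0^n}{K_0\sqrt{-1}^{n^2}\Omega\wedge\overline{\Omega}}$ recalled at the end of section \ref{Distancelike}, combined with the faster-than-quadratic decay of $Err_2$ through two derivatives. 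The relative connectivity of large annuli is the step where I expect to have to work hardest, because the annulus $\{\tilde\rho\sim D\}$ meets three qualitatively different regions (the two Tian--Yau cores and the generic region $x_1\sim x_2$); as in \cite{Gabor} section 7, one joins any point in a Tian--Yau core to the generic region by an almost-horizontal path whose length is comparable to the outer annular radius, and inside the generic region connectivity within a dilate of the annulus is immediate from the base $(x_1,x_2)$-picture.

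Given these hypotheses, Hein's package produces $\phi_{rel}\in C^{4,\alpha'}$ solving the complex Monge--Amp\`ere equation with $|\phi_{rel}|\leq C\tilde\rho^{2-q+\epsilon}$ for any $2<q<p'$ and any small $\epsilon>0$. Since $2-p' = -\frac{2n-4}{n+2}$, letting $q\nearrow p'$ yields $|\phi_{rel}| = O(\tilde\rho^{-q''})$ for every $q''<\frac{2n-4}{n+2}$, as claimed. Promoting this pointwise decay to the local $C^{k,\alpha}$ decay asserted in the statement is a routine elliptic bootstrap for the complex Monge--Amp\`ere equation on the local universal covers of the quasi-atlas charts, using the $C^{2,\alpha}$ estimate of Evans--Krylov/Calabi together with the ansatz-level control of the background geometry and the faster-than-quadratic decay of $f$. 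The main obstacle throughout is not the application of Hein's package itself but the careful verification of SOB$(p')$ and the relative-connected-annulus property in the presence of collapsing $T^2$-fibres and three distinct asymptotic regimes, which is precisely the place where the detailed length-scale bookkeeping of sections \ref{Basiclengthscales}--\ref{Distancelike} is used.
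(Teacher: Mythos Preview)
Your proposal is correct and follows essentially the same route as the paper: verify the hypotheses of Hein's package (quasi-atlas, distance-like function $\tilde\rho$, weighted Sobolev via the relative-connected-annulus variant of $\mathrm{SOB}(p')$ with $p'=\tfrac{4n}{n+2}$), feed in the faster-than-quadratic decay of $Err_2$, and then upgrade the resulting pointwise decay $|\phi_{rel}|=O(\tilde\rho^{2-p'+\epsilon})$ to local $C^{k,\alpha}$ bounds by elliptic regularity on the $O(\rho)$-scale charts. The paper's own proof is in fact terser than yours, simply citing the earlier sections for each hypothesis and then invoking Hein's package directly.
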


\begin{proof}
We have verified the weighted Sobolev inequality (\cf section \ref{WeightedSobolevcondition}), the existence of the distance-like function (\cf section \ref{Distancelike}), and the existence of $C^{k,\alpha}$ quasi-atlas (\cf section \ref{Genericregiongluingansatz}, \ref{AsymptoticexpansionnearD1}).

The volume form error for $dd^c\phi_{glue}$ decays like
$O(\tilde{\rho}^{ - \frac{2n(2n-1)}{(n-1)(n+2)} } )$ (\cf (\ref{Err2})). On the other hand, the volume growth rate is $O(\tilde{\rho}^{p'})$ with $2<p'= \frac{4n}{n+2}<\frac{2n(2n-1)}{(n-1)(n+2)} $. 
In particular the volume error is bounded by $O( \tilde{\rho}^{-p'} )$. Applying Hein's package, we can find a $C^{k,\alpha}$ bounded solution $\phi_{rel}$, such that
\[
(dd^c\phi_{glue}+ dd^c \phi_{rel})^n =  (1+Err_2)^{-1} (dd^c\phi_{glue})^n= K_0 \sqrt{-1}^{n^2}\Omega\wedge \overline{\Omega},
\]
with decay estimate $|\phi_{rel}|=O(\tilde{\rho}^{2-p_0+\epsilon}  )=O(\tilde{\rho}^{-q})$. Since the charts on the local universal covers have harmonic radius scale $O(\rho)$, elliptic regularity improves the decay estimate to local $C^{k,\alpha}$-norms.
\end{proof}

The smallness of $dd^c\phi_{rel}$ near infinity means that the main features of the asymptotic geometry are preserved. In particular, the \textbf{volume growth} of the Calabi-Yau metric $dd^c\phi$ is $\text{Vol}(B(\tilde{\rho}))\sim \tilde{\rho}^{ \frac{4n}{n+2}}$. The \textbf{tangent cone at infinity} refers to the pointed Gromov-Hausdorff limit of rescaled geodesic balls centred at a fixed reference point. In our case, the rescaling procedure obliviates the $T^2$ and $Y$-fibres . The tangent cone is topologically 
$\R_{\geq 0}^2$ with the variables $x_1, x_2$, and metrically it is up to a constant the \textbf{Hessian metric} 
\[
g_\infty= \frac{\partial^2 u}{\partial x_i\partial x_j }dx_idx_j,
\]
where $u(x_1, x_2)$ is the solution to the \textbf{non-archimedean Monge-Amp\`ere equation}. The \textbf{renormalized measure} on the tangent cone, which comes from pushing forward the complex Monge-Amp\`ere measure, is up to constant factor  $dx_1dx_2$.

\section*{Appendix: Hypergeometric functions}

Recall that the hypergeometric function $\HGF[a,b;c;z]$ is defined by
\begin{equation}\label{Hypergeometric}
\HGF[a,b;c;z]= \sum_{n=0}^{\infty} \frac{(a)_n(b)_n}{n!(c)_n} z^n
\end{equation}
where 
\[
(\alpha)_n:= \alpha(\alpha+1)(\alpha+2)\cdots (\alpha+n-1) ,\quad (\alpha)_0=1
\]
and we assume that $c \notin \mathbb{Z}_{\leq 0}$.  It is a standard fact that the hypergeometric series converges when $|z|<1$ and when ${\rm Re}(c-a-b)>0$ is also converges when $z=1$; see for instance \cite[Chapter 1]{Bailey}.

We start from the elementary Taylor expansion
\[
(1-y)^{-\beta}=\sum_{k=0}^{\infty} \frac{(\beta)_{k}}{k!}y^k, \quad |y|<1.
\]

\begin{lem}\label{lem: HGFnonLegendre}
	For $x\geq1$ we have
	\[
	\int_{1}^{x} (1-y^{-2})^{-1/n}dy  = x\HGF[-\frac{1}{2}, \frac{1}{n}; \frac{1}{2}; x^{-2}] - \HGF[-\frac{1}{2}, \frac{1}{n}; \frac{1}{2}; 1].
	\]
\end{lem}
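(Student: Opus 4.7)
The natural plan is to expand the integrand as a binomial series in $y^{-2}$, integrate term by term, and then recognize the resulting series as values of $\HGF[-\tfrac{1}{2},\tfrac{1}{n};\tfrac{1}{2};\cdot]$ at $x^{-2}$ and at $1$.

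\textbf{Step 1 (Binomial expansion).} For $y>1$, the Taylor expansion $(1-z)^{-1/n} = \sum_{k\geq 0} \frac{(1/n)_k}{k!} z^k$ applied to $z=y^{-2}$ gives an absolutely and uniformly convergent series on any compact subinterval of $(1,\infty)$:
\[
(1-y^{-2})^{-1/n} = \sum_{k=0}^{\infty} \frac{(1/n)_k}{k!}\, y^{-2k}.
\]

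\textbf{Step 2 (Termwise integration on $[1+\epsilon,x]$).} Uniform convergence on $[1+\epsilon,x]$ lets me integrate term by term. Since $1-2k\neq 0$ for $k\in \Z_{\geq 0}$,
\[
\int_{1+\epsilon}^{x}(1-y^{-2})^{-1/n}\,dy \;=\; \sum_{k=0}^{\infty} \frac{(1/n)_k}{k!\,(1-2k)}\Bigl(x^{1-2k}-(1+\epsilon)^{1-2k}\Bigr).
\]

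\textbf{Step 3 (Pass to $\epsilon \to 0$).} Near $y=1$ the integrand behaves like $(2(y-1))^{-1/n}$, which is integrable because $1/n<1$; hence the LHS converges to $\int_1^x (1-y^{-2})^{-1/n}\,dy$. For the RHS, termwise the quantity $(1+\epsilon)^{1-2k}\to 1$, so it suffices to show the boundary series $\sum_{k\geq 0}\frac{(1/n)_k}{k!(1-2k)}$ converges absolutely; this will also license dominated convergence. Using the ratio computation below, this is precisely $\HGF[-\tfrac{1}{2},\tfrac{1}{n};\tfrac{1}{2};1]$, which converges by Gauss's theorem since $\mathrm{Re}(c-a-b) = \tfrac{1}{2}-(-\tfrac{1}{2})-\tfrac{1}{n}=1-\tfrac{1}{n}>0$ for $n\geq 2$.

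\textbf{Step 4 (Recognize the hypergeometric series).} The key algebraic identity is
\[
\frac{(-1/2)_k}{(1/2)_k}\;=\;\frac{1}{1-2k}\qquad (k\geq 0),
\]
verified by direct inspection: all but the first factor in the numerator and all but the last factor in the denominator cancel, leaving $\frac{-1/2}{(2k-1)/2} = \frac{1}{1-2k}$ (and the $k=0$ case is trivial). Substituting,
\[
\sum_{k=0}^{\infty} \frac{(1/n)_k}{k!(1-2k)} x^{1-2k} \;=\; x\sum_{k=0}^{\infty}\frac{(-1/2)_k (1/n)_k}{k!\,(1/2)_k} x^{-2k} \;=\; x\,\HGF[-\tfrac{1}{2},\tfrac{1}{n};\tfrac{1}{2};x^{-2}],
\]
and setting $x=1$ gives the constant term as $\HGF[-\tfrac{1}{2},\tfrac{1}{n};\tfrac{1}{2};1]$. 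Combining these yields the claimed identity.

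\textbf{Main obstacle.} The only non-routine point is the justification of the $\epsilon\to 0$ limit in Step 3, since the binomial series does not converge uniformly up to the singularity at $y=1$. This is handled cleanly by noting that absolute convergence of $\HGF[-\tfrac{1}{2},\tfrac{1}{n};\tfrac{1}{2};1]$ (Gauss, using $n\geq 2$) provides a summable majorant in $k$, allowing dominated convergence; alternatively, one can invoke Abel's continuity theorem for the power series $x\mapsto x\HGF[-\tfrac{1}{2},\tfrac{1}{n};\tfrac{1}{2};x^{-2}]$ at the boundary $x=1$. Everything else is bookkeeping on Pochhammer symbols.
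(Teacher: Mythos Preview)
Your proof is correct and follows essentially the same approach as the paper: binomial expansion of the integrand, termwise integration, and recognition of the resulting series via the identity $\frac{(-1/2)_k}{(1/2)_k}=\frac{1}{1-2k}$. The only difference is that you add a careful $\epsilon\to 0$ justification near the integrable singularity at $y=1$, which the paper simply omits.
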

\begin{proof}
We compute
	\[
	\begin{aligned}
	\int_{1}^{x} (1-y^{-2})^{-1/n}dy  = \int_{1}^{x} \sum_{k=0}^{\infty} \frac{(\frac{1}{n})_{k}}{k!}y^{-2k}dy= y\sum_{k=0}^{\infty} \frac{(\frac{1}{n})_{k}}{k!}\frac{y^{-2k}}{(-2k+1)}\bigg|_{y=1}^{y=x}.
	\end{aligned}
	\]
The Lemma follows from the observation
$
	\frac{(-\frac{1}{2})_{k}}{(\frac{1}{2})_{k}} =- \frac{1}{2k-1}.
$
\end{proof}

\begin{prop}\label{lem: LegendreODE}
Let $n\geq 3$.	Suppose $g$ satisfies the ODE
	\begin{equation*}
	\frac{d}{dy} \left(\frac{g}{y}\right) = \frac{1}{y^2(1-y^n)^\frac{1}{2}}, \qquad 0 < y<1,
	\end{equation*}
	together with the initial conditions $g(0)=-1, g'(0)=0$.  Then we have
	\[
	g(y) = -\HGF[\frac{1}{2}, -\frac{1}{n}; \frac{n-1}{n}; y^n].
	\]
	In particular 
	\[
	g(1)= \lim_{y\to 1}g(y)= - \HGF[\frac{1}{2}, -\frac{1}{n}; \frac{n-1}{n}; 1]= -\frac{\Gamma(\frac{n-1}{n})\sqrt{\pi}}{\Gamma(\frac{n-2}{2n})}.
	\]
\end{prop}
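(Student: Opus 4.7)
The strategy is to integrate the ODE directly term by term via the binomial series, then recognize the resulting power series as the stated hypergeometric function through a combinatorial identity for Pochhammer symbols, and finally invoke Gauss's summation formula at $z=1$.

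First I would expand $(1-y^n)^{-1/2}=\sum_{k=0}^{\infty}\frac{(1/2)_k}{k!}y^{nk}$ and integrate the ODE $\frac{d}{dy}(g/y)=y^{-2}(1-y^n)^{-1/2}$ term by term. The $k=0$ contribution yields the singular antiderivative $-y^{-1}$ (which is exactly what is needed to produce $g(0)=-1$ after multiplying by $y$), while for $k\geq 1$ each term $y^{nk-2}$ integrates to $y^{nk-1}/(nk-1)$. Multiplying the resulting expression for $g/y$ by $y$ gives
\begin{equation*}
g(y) = -1 + Cy + \sum_{k=1}^{\infty}\frac{(1/2)_k}{k!(nk-1)}y^{nk},
\end{equation*}
and the initial condition $g'(0)=0$ forces $C=0$ since the series terms all have degree $\geq n\geq 3$.

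Next I would match this series to the claimed hypergeometric expression. The crucial elementary identity is
\begin{equation*}
\frac{(-1/n)_k}{((n-1)/n)_k} = \frac{(-1)(n-1)(2n-1)\cdots((k-1)n-1)}{(n-1)(2n-1)\cdots(kn-1)} = \frac{-1}{kn-1},
\end{equation*}
obtained by writing both Pochhammer symbols as telescoping products with common factor $1/n^k$. Substituting into the definition of $\HGF[1/2,-1/n;(n-1)/n;z]$, the hypergeometric series becomes $\sum_{k\geq 0}\frac{(1/2)_k}{k!}\cdot\frac{-1}{nk-1}z^k$, which (with $z=y^n$) is precisely $-g(y)$. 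This establishes $g(y) = -\HGF[1/2,-1/n;(n-1)/n;y^n]$.

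For the boundary value at $y=1$, I would apply Gauss's hypergeometric theorem $\HGF[a,b;c;1]=\Gamma(c)\Gamma(c-a-b)/(\Gamma(c-a)\Gamma(c-b))$, which is valid since $c-a-b=\frac{n-1}{n}-\frac{1}{2}+\frac{1}{n}=\frac{1}{2}>0$ for $n\geq 3$. With $c-a=(n-2)/(2n)$, $c-b=1$, and $c-a-b=1/2$, this yields $\HGF[1/2,-1/n;(n-1)/n;1]=\Gamma((n-1)/n)\sqrt{\pi}/\Gamma((n-2)/(2n))$, giving the stated value of $g(1)$. The main (mild) obstacle is the combinatorial bookkeeping of the Pochhammer ratio; everything else is a direct computation once the right series is in hand.
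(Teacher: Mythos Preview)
Your proposal is correct and follows essentially the same route as the paper: expand $(1-y^n)^{-1/2}$ as a binomial series, integrate term by term and use the initial data to fix constants, then identify the resulting series with $-\HGF[\tfrac12,-\tfrac1n;\tfrac{n-1}{n};y^n]$ via the Pochhammer identity $\frac{(-1/n)_k}{((n-1)/n)_k}=\frac{-1}{nk-1}$, and finally apply Gauss's summation at $z=1$. Your treatment of the $k=0$ term and the verification of $c-a-b=\tfrac12>0$ are slightly more explicit than the paper's, but the argument is otherwise identical.
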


\begin{proof}
Integrating
\[
y^{-2}(1-y^n)^{-1/2}=\sum_{k=0}^{\infty}\frac{(\frac{1}{2})_{k}}{k!}y^{nk-2},
\]
and utilizing the initial conditions to fix the leading coefficients,
\[
g(y)= \sum_{k=0}^{\infty}\frac{(\frac{1}{2})_{k}}{k!}\frac{y^{nk}}{nk-1}.
\]
The formula for $g(y)$ follows from the observation 
$
	\frac{-1}{nk-1} =  \frac{(-\frac{1}{n})_{k}}{(\frac{n-1}{n})_{k}}.
$
The evaluation of $g(1)$ appeals to 
Gauss' hypergeometric theorem \cite[Section 1.3]{Bailey}:  
\begin{lem}
	For $a,b,c$ with ${\rm Re}(c-a-b)>0$ we have
	\[
	\HGF[a,b;c;1] = \frac{\Gamma(c)\Gamma(c-a-b)}{\Gamma(c-a)\Gamma(c-b)}.
	\]
\end{lem}

\end{proof}

{\em Department of Mathematics, Massachusetts Institute of Technology, 77 Massachusetts Avenue, Cambridge, MA 02139}

\end{document}